\pgfplotsset{width=10cm,compat=1.9}
\newcommand{\wt}{\widetilde}
\newcommand{\wh}{\widehat}
\def\RR{\mathbb{R}}
\def\CC{\mathbb{C}}
\def\NN{\mathbb{N}}
\newcommand{\la}{{\lambda}}
\newcommand{\f}{{\varphi}}
\newcommand{\R}{{\mathbb  R}}
\newcommand{\te}{{\theta}}
\newcommand{\N}{{\mathbb  N}}
\newcommand{\sgn}{\text{sgn}}
\def\Ddots{\mathinner{\mkern1mu\raise\p@
\vbox{\kern7\p@\hbox{.}}\mkern2mu
\raise4\p@\hbox{.}\mkern2mu\raise7\p@\hbox{.}\mkern1mu}}
\newcommand{\cH}{\mathcal{H}}
\newcommand{\eps}{\varepsilon}
\newcommand{\bd}{\boldsymbol{d}}
\newcommand{\cL}{\mathcal{L}}
\DeclareMathOperator{\dom}{dom}
\DeclareMathOperator{\ran}{ran}
\DeclareMathOperator{\spa}{span}
\DeclareMathOperator{\supp}{supp}
\def\bm1{\mathbbm{1}}
\newcommand{\w}{\omega}
\newcommand{\ci}[1]{_{ {}_{\scriptstyle #1}}}
\newcommand{\ti}[1]{_{\scriptstyle \text{\rm #1}}}
\definecolor{darkblue}{rgb}{0.2,0.2,0.6}
\definecolor{darkgreen}{rgb}{0.2,0.6,0.2}
\newcommand{\Hm}[1]{\leavevmode{\marginpar{\tiny%
$\hbox to 0mm{\hspace*{-0.5mm}$\leftarrow$\hss}%
\vcenter{\vrule depth 0.1mm height 0.1mm width \the\marginparwidth}%
\hbox to
0mm{\hss$\rightarrow$\hspace*{-0.5mm}}$\\\relax\raggedright #1}}}
\chardef\mathlig@atcode\count255
\def\actively#1#2{\begingroup\uccode`\~=`#2\relax\uppercase{\endgroup#1~}}
\def\mathlig@gobble{\afterassignment\mathlig@next@cmd\let\mathlig@next= }
\def\mathlig@delim{\mathlig@delim}
\def\mathlig@defcs#1{\expandafter\def\csname#1\endcsname}
\def\mathlig@let@cs#1#2{\expandafter\let\expandafter#1\csname#2\endcsname}
\def\mathlig@appendcs#1#2{\expandafter\edef\csname#1\endcsname{\csname#1\endcsname#2}}
\def\mathlig#1#2{\mathlig@checklig#1\mathlig@end\mathlig@defcs{mathlig@back@#1}{#2}\ignorespaces}
\def\mathlig@checklig#1#2\mathlig@end{%
 \expandafter\ifx\csname mathlig@forw@#1\endcsname\relax
 \expandafter\mathchardef\csname mathlig@back@#1\endcsname=\mathcode`#1%
 \mathcode`#1"8000\actively\def#1{\csname mathlig@look@#1\endcsname}%
 \mathlig@dolig#1\mathlig@delim
\fi
\mathlig@checksuffix#1#2\mathlig@end
}
\def\mathlig@checksuffix#1#2\mathlig@end{%
\ifx\mathlig@delim#2\mathlig@delim\relax\else\mathlig@checksuffix@{#1}#2\mathlig@end\fi
}
\def\mathlig@checksuffix@#1#2#3\mathlig@end{%
\expandafter\ifx\csname mathlig@forw@#1#2\endcsname\relax\mathlig@dosuffix{#1}{#2}\fi
\mathlig@checksuffix{#1#2}#3\mathlig@end
}
\def\mathlig@dosuffix#1#2{%
\mathlig@appendcs{mathlig@toks@#1}{#2}%
\mathlig@dolig{#1}{#2}\mathlig@delim
}
\def\mathlig@dolig#1#2\mathlig@delim{%
 \mathlig@defcs{mathlig@look@#1#2}{%
 \mathlig@let@cs\mathlig@next{mathlig@forw@#1#2}\futurelet\mathlig@next@tok\mathlig@next}%
 \mathlig@defcs{mathlig@forw@#1#2}{%
  \mathlig@let@cs\mathlig@next{mathlig@back@#1#2}%
  \mathlig@let@cs\checker{mathlig@chck@#1#2}%
  \mathlig@let@cs\mathligtoks{mathlig@toks@#1#2}%
  \expandafter\ifx\expandafter\mathlig@delim\mathligtoks\mathlig@delim\relax\else
  \expandafter\checker\mathligtoks\mathlig@delim\fi
  \mathlig@next
 }%
 \mathlig@defcs{mathlig@toks@#1#2}{}%
 \mathlig@defcs{mathlig@chck@#1#2}##1##2\mathlig@delim{%
  \ifx\mathlig@next@tok##1%
   \mathlig@let@cs\mathlig@next@cmd{mathlig@look@#1#2##1}\let\mathlig@next\mathlig@gobble
  \fi
  \ifx\mathlig@delim##2\mathlig@delim\relax\else
   \csname mathlig@chck@#1#2\endcsname##2\mathlig@delim
  \fi
 }%
%
 \ifx\mathlig@delim#2\mathlig@delim\else
  \mathlig@defcs{mathlig@back@#1#2}{\csname mathlig@back@#1\endcsname #2}%
 \fi
}%
\mathchardef\ordinarycolon\mathcode`\:
\def\vcentcolon{\mathrel{\mathop\ordinarycolon}}
\numberwithin{equation}{section}
\theoremstyle{plain}
\newtheorem{theo}{Theorem}[section]
\newtheorem{cor}[theo]{Corollary}
\newtheorem{lem}[theo]{Lemma}
\newtheorem{prop}[theo]{Proposition}
\theoremstyle{definition}
\newtheorem*{theorem*}{Theorem}
\newtheorem*{idea*}{Idea}
\theoremstyle{remark}
\newtheorem{rem}[theo]{Remark}
\newtheorem*{ex*}{Example}
\newtheorem*{exs*}{Examples}
\newtheorem*{rems*}{Remarks}
\title[Spectrum of Dirac Operator
with singular interaction on broken line]{Spectral analysis of the Dirac operator with a singular interaction on a broken line}
\author{Dale~Frymark}
\address{Technische Universit\"at Graz, Institut f\"ur Angewandte Mathematik, Steyrergasse 30, 8010 Graz, Austria}
\email{frymark@math.tugraz.at}
\author{Markus Holzmann}
\address{Technische Universit\"at Graz, Institut f\"ur Angewandte Mathematik, Steyrergasse 30, 8010 Graz, Austria}
\email{holzmann@math.tugraz.at}
\author{Vladimir Lotoreichik}
\address{Department of Theoretical Physics, Nuclear Physics Institute, Czech Academy of Sciences, 25068 
\v{R}e\v{z}, Czech Republic      
}
\email{lotoreichik@ujf.cas.cz}
\keywords{Dirac operator; Lorentz scalar $\delta$-shell interaction; broken line; essential spectrum; geometrically induced bound states; min-max principle}
\subjclass[2020]{47F05, 35P15, 35Q40}
\begin{document}

\begin{abstract}
We consider the one-parametric family of  self-adjoint realizations of the two-dimen\-sional massive Dirac operator with a Lorentz scalar $\delta$-shell interaction of strength $\tau\in\RR\setminus\{-2,0,2\}$ supported on a broken line of opening angle $2\w$ with $\w\in(0,\frac{\pi}{2})$. The essential spectrum of any such self-adjoint realization is symmetric with respect to the origin with a gap around zero whose size depends on the mass and, for $\tau < 0$, also on the strength of the interaction, but does not depend on $\w$. As the main result, we prove that
for any $N\in\NN$ and strength $\tau\in(-\infty,0)\setminus\{-2\}$ the discrete spectrum of any such self-adjoint realization has at least $N$ discrete eigenvalues, with multiplicities taken into account, in the gap of the essential spectrum provided that $\w$ is sufficiently small. Moreover, we obtain an explicit estimate on $\w$ sufficient for this property to hold. For $\tau\in(0,\infty)\setminus\{2\}$, the discrete spectrum consists of at most one simple eigenvalue. 
\end{abstract}

\maketitle

\setcounter{tocdepth}{1}
\tableofcontents

\section{Introduction}\label{s-intro}

Dirac operators with singular interactions supported on hypersurfaces
in the Euclidean space serve
as an idealized model for more physically realistic Dirac operators with regular potentials localized in the vicinity of a hypersurface. The Dirac operator with a singular interaction
supported on a sphere in $\RR^3$ was first analysed in~\cite{DES}, where  separation of variables was extensively used. The renewal of interest to this topic is connected with the papers~\cite{AMS,AMS3,AMS2}, where singular interactions supported on general closed surfaces in $\RR^3$ were analysed, see also \cite{BEHL1,BEHL2} for further considerations in dimension three, \cite{BHOBP, CLMT} for the study of the two-dimensional case, and \cite{BHSS} for a recent unified treatment for space dimensions two and three. 
  
In this paper we are interested in spectral properties of Dirac operators with singular potentials supported on a broken line, for which there
are several motivations. On the one hand,
the analysis of two-dimensional Dirac operators with singular interactions supported on curves with corner points revealed the importance of the model Dirac operators with singular interactions supported on unbounded broken lines~\cite{TOB, PVDB}. In particular, self-adjointness properties of such Dirac operators with Lorentz scalar $\delta$-shell interactions supported on star graphs were studied in \cite{FL}.

On the other hand, the broken line is a typical example where the existence of geometrically induced eigenvalues is observed. 
The relation of the spectral properties of a differential operator and the geometry of the interaction support or the domain in which the operator acts is one of the most classical questions in spectral theory. In particular, one is often interested in situations when a modification of the geometry generates bound states.  There is a huge number of results available for Laplacians with different boundary conditions, see the monograph \cite{EK15} and the references therein or~\cite{DE95, DEK01} for the case of Dirichlet boundary conditions,~\cite{EM14, P16} for related results for Laplacians with Robin boundary conditions, and~\cite{BFKR22, ELP18} for considerations on the magnetic Laplacian with Neumann boundary conditions.   
One interesting example of the above type is the non-relativistic counterpart of the Dirac operator with a singular potential. Consider a Schr\"odinger operator with a $\delta$-potential of strength $\alpha \in \mathbb{R}$ supported on a broken line $\Gamma$  of opening angle $2 \omega$ with $\omega \in (0,\frac{\pi}{2}]$, i.e. the operator that is formally given by
\begin{equation*}
  T_\alpha = -\Delta + \alpha \delta_\Gamma.
\end{equation*}
With the help of the Birman-Schwinger principle it was established in \cite{EI01} that $\sigma(T_\alpha) = \sigma_{\textup{ess}}(T_\alpha) = [0, \infty)$ for $\alpha \geq 0$, while for attractive interactions $\alpha < 0$ one has $\sigma_{\textup{ess}}(T_\alpha) = [-\frac{\alpha^2}{4}, \infty)$ and, if $\Gamma$ is not a straight line, then the discrete spectrum of $T_\alpha$ is non-empty. In \cite{EN03} it was shown  with the help of the min-max principle and suitable test functions  that for any $N \in \mathbb{N}$ the operator $T_\alpha$ has at least $N$ discrete eigenvalues (taking multiplicities into account), if the opening angle of $\Gamma$ is sufficiently small. Finally, it was proved in \cite{P17} with a variational argument  that $T_\alpha$ always has, for $\alpha < 0$, a non-empty discrete spectrum, when the interaction is supported on  a star graph which is not a straight line, see also \cite{PV22} for related considerations for Schr\"odinger operators with $\delta'$-interactions.

The main goal of this paper is to study the spectral properties and to establish the existence of geometrically induced bound states for the two-dimensional Dirac operator with a Lorentz scalar $\delta$-shell interaction supported on the broken line $\Gamma$ that is formally given by
\begin{equation} \label{def_A_tau_formal}
  \mathcal{A}_\tau = -i \sigma_1 \partial_1 - i \sigma_2 \partial_2 + m \sigma_3 + \tau \sigma_3 \delta_\Gamma.
\end{equation}
Here, $\tau \in \mathbb{R} \setminus \{ -2, 0, 2 \}$, $m > 0$, and $\sigma_1, \sigma_2$, and $\sigma_3$ are the Pauli spin matrices defined in~\eqref{def_Pauli} below. From a physical point of view, $\mathcal{A}_\tau$ describes the propagation of a relativistic quantum particle with mass $m$ under the influence of the Lorentz scalar potential $\tau \sigma_3 \delta_\Gamma$.  We remark that $\tau = \pm 2$ is excluded, as in this case the operator $\mathcal{A}_\tau$ decouples into two Dirac operators acting in the domains with boundary $\Gamma$ and infinite mass boundary conditions on
$\Gamma$, cf. \cite{BHOBP, BHSS}. In particular, in this \textit{confinement} case the spectrum is computed in \cite[Proposition~1.12]{TOB}, by which the essential spectrum of any self-adjoint realization of $\mathcal{A}_2$ is $(-\infty, -m] \cup [m, \infty)$ and there can only be one eigenvalue having multiplicity one, while the spectrum of any self-adjoint realization of $\mathcal{A}_{-2}$ is always the full real line; note that $\tau=-2$ in the present manuscript corresponds to $m<0$ in \cite{TOB}, which can be achieved via a unitary transformation.


Let us describe our results.
The broken line decomposes $\RR^2$ into two regions. Let
\begin{align} \label{def_Omega_+}
    \Omega_+:=\left\{(r\cos\te,r\sin\te)\in\RR^2\colon r>0,~\te\in (-\omega, \omega) \right\}\subset\RR^2,
\end{align}
be the wedge of opening angle $2 \omega$, and $\Omega_- := \mathbb{R}^2 \setminus \overline{\Omega_+}$ be the open complement of $\Omega_+$. Distinguish $\Gamma := \partial \Omega_+$ as the joint boundary of $\Omega_\pm$, and let $\nu = (\nu_1, \nu_2)$ be the unit normal vector field at $\Gamma$ which is pointing outwards of $\Omega_+$. Note that setups given with angles $\omega\in (\frac{\pi}{2},\pi)$ can have $\Omega_+$ and $\Omega_-$ interchanged and then rotated to yield a unitarily equivalent setup with $\omega\in (0,\frac{\pi}{2}]$, as above, with the same spectrum. Hence, we will always assume $\omega\in (0,\frac{\pi}{2}]$ in this paper.

In the following, functions $u\colon \mathbb{R}^2 \rightarrow \mathbb{C}^2$ will often be restricted to the two regions of $\RR^2$ and this is denoted by $u_\pm := u|_{\Omega_\pm}$. In order to define the operator in~\eqref{def_A_tau_formal} rigorously, we introduce, for $\tau \in \mathbb{R} \setminus \{ -2, 0, 2 \}$ and $m > 0$, the operator
\begin{equation} \label{def_S_tau}
  \begin{split}
    S_\tau u &= (-i \sigma_1 \partial_1 - i \sigma_2 \partial_2 + m \sigma_3) u_+ \oplus (-i \sigma_1 \partial_1 - i \sigma_2 \partial_2 + m \sigma_3) u_-, \\
    \dom S_\tau &= \left\{ u = u_+ \oplus u_- \in H^1(\Omega_+; \mathbb{C}^2) \oplus H^1(\Omega_-; \mathbb{C}^2): u_- |_\Gamma = M u_+ |_\Gamma \right\},
  \end{split}
\end{equation}
where the matrix-valued function $M$ is given by
\begin{equation} \label{def_M}
  M = \frac{4+\tau^2}{4-\tau^2} \sigma_0 + \frac{4 \tau}{4 - \tau^2} i \sigma_3 (\sigma_1 \nu_1 + \sigma_2 \nu_2)
\end{equation}
and $\sigma_0$ is the $2 \times 2$ identity matrix.

If $\Omega_+$ is a smooth bounded domain, then the self-adjointness and some spectral properties of $S_\tau$--including finiteness of the discrete spectrum--were studied in \cite{BHOBP}, see also \cite{AMS3, BEHL2, BHSS} for related results in dimension three. In \cite{HOBP}, the three-dimensional counterpart of $S_\tau$ with $\Omega_+$ being a bounded $C^4$-domain was studied with the help of a quadratic form associated with $S_\tau^2$ and for $\tau<0$ the asymptotic expansion of the discrete eigenvalues was shown for $m \rightarrow \infty$, see also \cite{ALTR17, LO18} for related considerations on Dirac operators on domains with boundary conditions. Some of the ideas from \cite{HOBP} are used in the analysis in the present paper as well. The case that $\Gamma \subset \mathbb{R}^2$ is the straight line was considered in \cite{BHT23}. Eventually, we mention that in the papers \cite{ABLO23, BFSVDB2, LO18} results on the relation of the geometry of a domain $\Omega \subset \mathbb{R}^2$ and the spectrum of the Dirac operator acting in $\Omega$ with infinite mass boundary conditions are provided.
In the case of Dirac operators with zigzag boundary conditions there is a clear picture of the geometrically induced spectrum \cite{EH22, S95}, as the spectrum of this operator can be expressed via the spectrum of the Dirichlet Laplacian acting in the same domain. However, to the best of our knowledge, there is no result in the literature where the existence of geometrically induced bound states of Dirac operators with singular potentials is shown, which is the main motivation in the present paper.

Coming back to the case that $\Gamma$ is the broken line, it is known that the operator $S_\tau$ is closed and symmetric, cf.~\cite{FL}. Note that for $\omega = \frac{\pi}{2}$ the curve $\Gamma$ is a straight line. In this case it is known that $S_\tau$ is self-adjoint and that the spectrum of $S_\tau$ is purely absolutely continuous, cf. \cite{BHT23}.
If $\omega \in(0, \frac{\pi}{2})$, then the deficiency indices of $S_\tau$ are $\dim \ker(S_\tau^* \mp i)=1$. Hence, $S_\tau$ admits a one-parametric family, labeled via $z$, of self-adjoint extensions $A_{\tau, z}$, cf. \cite{FL} and Section~\ref{s-wedge} for details. The spectral properties of these extensions are contained in the following, central result of the article.

\begin{theo} \label{theorem_intro}
  Let $\tau \in \mathbb{R} \setminus \{-2, 0, 2 \}$, $\omega \in (0, \frac{\pi}{2})$,  $m>0$, and $A_{\tau,z}$ be any self-adjoint extension of $S_\tau$ in~\eqref{def_S_tau}. Then, the following is true:
  \begin{itemize}
    \item[(i)] If $\tau > 0$, then $\sigma_\textup{ess}(A_{\tau,z}) = (-\infty, -m] \cup [m, \infty)$ and $A_{\tau,z}$ has at most one discrete eigenvalue of multiplicity one. 
    \item[(ii)] If $\tau < 0$, then $\sigma_\textup{ess}(A_{\tau,z}) = \big(-\infty, -m \big|\frac{\tau^2 - 4}{\tau^2+4}\big| \big] \cup \big[m \big|\frac{\tau^2 - 4}{\tau^2+4}\big|, \infty\big)$ and for any $N \in \mathbb{N}$ there exists an angle $\omega_\star \in (0, \frac{\pi}{2})$ depending on $N, \tau$ such that for all $\omega \in (0, \omega_\star]$ the operator $A_{\tau,z}$ has at least $N$ discrete eigenvalues with multiplicities taken into account.
  \end{itemize}
\end{theo}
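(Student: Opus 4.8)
The plan is to route both assertions through the nonnegative self-adjoint operator $A_{\tau,z}^2$. Since $\ker(A_{\tau,z}^2-\mu)=\ker(A_{\tau,z}-\sqrt\mu)\oplus\ker(A_{\tau,z}+\sqrt\mu)$ for $\mu>0$ and $\ker(A_{\tau,z}^2)=\ker(A_{\tau,z})$, the number of eigenvalues of $A_{\tau,z}$ in a symmetric interval $(-c,c)$, counted with multiplicity, equals the number of eigenvalues of $A_{\tau,z}^2$ in $[0,c^2)$, and $\sigma_{\textup{ess}}(A_{\tau,z}^2)=\{\lambda^2:\lambda\in\sigma_{\textup{ess}}(A_{\tau,z})\}$. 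For the essential spectrum I would first observe that it does not depend on $z$, because the deficiency indices of $S_\tau$ are $(1,1)$, so any two self-adjoint extensions have resolvents differing by a rank-one operator. To identify it, fix $R>0$; imposing a decoupling boundary condition on the circle $\partial B_R(0)$ changes the resolvent of $A_{\tau,z}$ only by a compact operator, the resulting operator on $B_R(0)$ has compact resolvent, and the resulting operator on $\RR^2\setminus\overline{B_R(0)}$ coincides, away from $B_R$, with the Dirac operator with a Lorentz scalar $\delta$-interaction of strength $\tau$ supported on two half-lines, each lying on a full straight line of strength $\tau$. A Weyl-sequence argument (translate a singular sequence of the straight-line operator $\mathcal A_\tau^{\mathrm{line}}$ far along one ray, multiply by a cutoff, and control the commutator) shows that the exterior operator has essential spectrum $\sigma(\mathcal A_\tau^{\mathrm{line}})$, which by \cite{BHT23} is purely absolutely continuous and equals $(-\infty,-m]\cup[m,\infty)$ for $\tau>0$ and $(-\infty,-c]\cup[c,\infty)$ with $c=m\big|\tfrac{\tau^2-4}{\tau^2+4}\big|$ for $\tau<0$; this yields the stated essential spectra and $\inf\sigma_{\textup{ess}}(A_{\tau,z}^2)=c^2$.

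For part (i) we have $c=m$, and the eigenvalues of $A_{\tau,z}$ in the gap are described by a Birman--Schwinger / Weyl-function characterisation adapted to the corner: since $\mathcal A_\tau^{\mathrm{line}}$ has no point spectrum, the spectral equation for the eigenvalues in $(-m,m)$ reduces to a single scalar equation $M_\tau(\lambda)=\zeta(z)$, where $\zeta(z)\in\RR\cup\{\infty\}$ parametrises the extension and $M_\tau$ is a scalar Nevanlinna function which, for $\tau>0$, is holomorphic throughout $(-m,m)$ --- its potential poles would be gap eigenvalues of a reference extension and are absent precisely because $\tau>0$ is repulsive, a fact one reads off the explicit form of $M_\tau$ (equivalently, the relevant Birman--Schwinger operator has norm $<1$). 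A Nevanlinna function holomorphic on a real interval is strictly increasing there, so $M_\tau(\lambda)=\zeta(z)$ has at most one solution in $(-m,m)$, and the corresponding eigenvalue is simple because its eigenprojection is rank one.

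For part (ii) we have $c=m\big|\tfrac{\tau^2-4}{\tau^2+4}\big|<m$, and by the reduction above together with the min-max principle it suffices to exhibit, for each $N\in\NN$ and all sufficiently small $\omega$, an $N$-dimensional subspace $\mathcal L\subset\dom(A_{\tau,z})$ with $\|A_{\tau,z}u\|^2\le(c^2-\delta)\|u\|^2$ on $\mathcal L$ for some $\delta>0$. The building block is the band-edge generalised eigenfunction $\psi$ of $\mathcal A_\tau^{\mathrm{line}}$ at energy $c$: it is constant in the longitudinal variable and decays like $e^{-\sqrt{m^2-c^2}\,|x_\perp|}$ in the transverse one. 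Placing $\psi$, suitably rotated, along one ray of $\Gamma$, multiplying by a longitudinal profile $f(s)$ with $s$ the arc length measured from a fixed $s_0>0$, and cutting off at arc length $L$, one obtains functions that vanish near the corner and hence lie in $\dom(A_{\tau,z})$ for every $z$; using $A_{\tau,z}^2=-\Delta+m^2$ off $\Gamma$ a direct computation gives
\[
  \|A_{\tau,z}(\psi f)\|^2-c^2\|\psi f\|^2=\int_{s_0}^{L}\Big(|f'(s)|^2+V_{\mathrm{eff}}(s)\,|f(s)|^2\Big)\,ds+\mathcal E(f),
\]
where $V_{\mathrm{eff}}(s)<0$ is the energy shift produced by the \emph{second} branch of $\Gamma$, which sits at transverse distance $\approx 2s\sin\omega$ and is felt attractively because $\tau<0$, so that $V_{\mathrm{eff}}(s)\approx-W\,e^{-2s\sqrt{m^2-c^2}\sin\omega}$ with $W=W(\tau,m)>0$, and $\mathcal E(f)$ collects the quasimode errors from the cutoffs at $s_0$ and $L$ and from the non-product geometry near the corner. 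The one-dimensional Schr\"odinger operator $-\frac{d^2}{ds^2}+V_{\mathrm{eff}}$ on $(s_0,\infty)$ has at least $N$ negative eigenvalues once $\omega$ lies below a threshold $\omega_\star(N,\tau,m)$, obtained by minorising $V_{\mathrm{eff}}$ by a rectangular well of depth $\sim W$ and width $\sim(\omega\sqrt{m^2-c^2})^{-1}$ and counting; taking $f_1,\dots,f_N$ to be the corresponding eigenfunctions (compactly supported in $(s_0,L)$ for $L$ large) one sets $\mathcal L=\mathrm{span}\{\psi f_1,\dots,\psi f_N\}$.

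The genuinely substantial step is the control of the errors $\mathcal E(f_j)$ in the last display: one must show that they are $o(1)\,\|\psi f_j\|^2$ \emph{uniformly} as $\omega\to0$, and, in particular, that the overlap of $\psi$ with the second branch produces an effective well of total area of order $\omega^{-1}$ rather than $O(1)$ --- this requires a careful estimate of $\int_{\text{branch }2}|\psi|^2$ in terms of the branch separation, and a treatment of the near-corner region where the two transverse tails genuinely interfere and the configuration is not a product --- only after which the heuristic count becomes an explicit bound on $\omega_\star$. By contrast, the essential-spectrum computation and part (i) are comparatively routine once $\mathcal A_\tau^{\mathrm{line}}$ is understood through \cite{BHT23}.
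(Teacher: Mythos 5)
Your overall architecture (pass to $A_{\tau,z}^2$, note that the essential spectrum is $z$-independent because the deficiency indices are $(1,1)$, then use min--max) matches the paper's, but both halves of your argument have a genuine gap. For part (i), everything hinges on the unproved assertion that the Weyl function $M_\tau$ has no poles in $(-m,m)$, i.e.\ that some reference extension --- equivalently the closed symmetric operator $S_\tau$ itself --- has no spectrum in the gap. You say this can be ``read off the explicit form of $M_\tau$'' or from a Birman--Schwinger norm bound, but computing either for a corner geometry is nontrivial and is exactly where the work lies. The paper proves this via the identity
\begin{equation*}
 \|S_\tau u\|^2_{\RR^2}=\|\grad u\|^2_{\RR^2\setminus\Gamma}+m^2\|u\|^2_{\RR^2}+\tfrac{2m}{\tau}\big\|u_+|_{\Gamma}-u_-|_{\Gamma}\big\|^2_{\Gamma},
\end{equation*}
whose derivation (cancellation of the tangential-derivative boundary terms, which requires the weighted Sobolev space analysis of the appendix) is the technical heart of the whole paper; once one has $\|S_\tau u\|\ge m\|u\|$ for $\tau>0$, a rank-one perturbation argument gives at most one simple gap eigenvalue without any Nevanlinna monotonicity. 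Your route could be completed, but as written the decisive step is asserted rather than proved. The same identity also supplies the upper bound $\inf\sigma_{\rm ess}\ge c^2$ for $\tau<0$ via Neumann bracketing of $S_\tau^*S_\tau$; your circle-decoupling reduction still leaves an exterior operator with two half-line interactions whose essential spectrum must itself be localized, so it does not by itself close that direction either.

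The more serious problem is in part (ii). Your test functions $\psi f$, built from the transverse band-edge profile of the straight-line operator along \emph{one} branch, do not lie in $\dom A_{\tau,z}$: the profile $\psi$ has exponential tails crossing the \emph{second} branch of $\Gamma$, where it is smooth and therefore violates the transmission condition $u_-|_\Gamma=Mu_+|_\Gamma$ (note $M\neq\sigma_0$ for $\tau\neq 0$). The ``effective attractive potential $V_{\rm eff}(s)\approx -We^{-2s\kappa\sin\omega}$ produced by the second branch'' is the correct mechanism for the Schr\"odinger operator $-\Delta+\alpha\delta_\Gamma$ of \cite{EN03}, whose form domain is all of $H^1(\RR^2)$ and where the branch contributes $\alpha\int_{\Gamma}|u|^2$ for \emph{any} test function; for the Dirac operator the interaction is encoded as a constraint in the domain, and the only negative term available, $\frac{2m}{\tau}\|u_+|_\Gamma-u_-|_\Gamma\|^2_\Gamma$, vanishes for a function that is continuous across a branch. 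To repair this you would have to modify $\psi$ near the second branch to restore the jump condition, and then control the gradient cost of that modification --- at which point you have essentially rebuilt the paper's construction, which instead takes $u_n(x,y)=f_n(x)g(y)h(x,y)$ supported in a strip crossing \emph{both} branches, with a piecewise-constant spinor $h$ chosen so that the transmission condition holds on all of $\Gamma$ and the term $\frac{2m}{\tau}\|u_+-u_-\|^2_\Gamma<0$ is explicitly computable. So the ``error control'' you flag as the substantial step is not the real obstruction; the admissibility of the trial space is, and it changes the shape of the construction.
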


The result relies on an analysis of the quadratic form of $S_\tau^*S_\tau$, which is a restriction of the quadratic form of $(A_{\tau,z})^2$, see Section \ref{ss-quadratic}. Repercussions for the essential spectrum of $A_{\tau,z}$ are then obtained using perturbation theory and an auxiliary result on the symmetry of the essential spectrum with respect to the origin in Section \ref{section_symmetry_spectrum}. The statements on the essential spectrum in the theorem are then obtained in Section \ref{section_sigma_ess} using Neumann bracketing and the method of singular sequences.

The claim on the discrete spectrum for $\tau > 0$ in Theorem~\ref{theorem_intro}\,(i) relies on the abstract perturbation theory of self-adjoint operators. The result on the discrete spectrum for $\tau < 0$ in Theorem~\ref{theorem_intro}\,(ii) is shown in Section~\ref{section_discrete} with the min-max principle applied to the quadratic form of $S_\tau^*S_\tau$. An appropriate family of test functions is constructed via a modification of the approach used in~\cite{EN03} to deal with the Schr\"odinger operator with a $\delta$-interaction supported on a broken line. Moreover, we provide a formula for a suitable value of the critical angle $\omega_\star$. 
It remains an open problem whether the discrete spectrum of $A_{\tau,z}$ is always non-empty provided that $\omega\ne \frac{\pi}{2}$ and $\tau \in (-\infty,0)\setminus\{-2\}$.



\subsection*{Notations}

For a Hilbert space $\mathcal{H}$ we denote the associated inner product (which is linear in the first entry) by $\langle \cdot, \cdot\rangle_{\mathcal{H}}$  and the induced norm by $\| \cdot \|_{\mathcal{H}}$.
If $A$ is a closed linear operator in a Hilbert space $\mathcal{H}$, then we denote its domain, range, kernel, spectrum, and resolvent set by $\dom A$, $\ran A$, $\ker A$, $\sigma(A)$, and $\rho(A)$, respectively. If $A$ is self-adjoint, then its discrete and essential spectrum are $\sigma_\textup{disc}(A)$ and $\sigma_\textup{ess}(A)$, respectively.

Let
\begin{equation} \label{def_Pauli} 
\sigma_1 = \begin{pmatrix}
0 & 1\\                                              
1 & 0                                           
\end{pmatrix}, \quad \sigma_2 =  \begin{pmatrix}
0 & -i\\                                              
i & 0                                  
\end{pmatrix}, \quad \sigma_3 = 
\begin{pmatrix} 
1 & 0\\                                              
0 & -1 \\                                            
\end{pmatrix},
\end{equation}
be the Pauli spin matrices and denote the $2 \times 2$ identity matrix by $\sigma_0$. They satisfy the anti-commutation relations 
\begin{equation} \label{Pauli_anti_commutation}
  \sigma_j \sigma_k + \sigma_k \sigma_j = 2 \delta_{jk} \sigma_0, \quad j, k \in \{ 1,2,3 \}.
\end{equation}
The notation
\begin{equation*}
  \sigma \cdot \grad := \sigma_1 \partial_1 + \sigma_2 \partial_2 \quad \text{and} \quad \sigma \cdot x = \sigma_1 x_1 + \sigma_2 x_2,
\end{equation*}
will also be used for $x = (x_1,x_2) \in \mathbb{C}^2$. 

If $\Omega \subset \mathbb{R}^2$ is a sufficiently smooth domain, then $L^2(\Omega; \mathbb{C}^n)$ and $L^2(\partial \Omega; \mathbb{C}^n)$, $n\in\mathbb{N}$, denote the sets of all square-integrable $\mathbb{C}^n$-valued functions on $\Omega$ and $\partial \Omega$. To simplify notation, we write $\| \cdot \|_\Omega := \| \cdot \|_{L^2(\Omega; \mathbb{C}^n)}$ and $\| \cdot \|_{\partial \Omega} := \| \cdot \|_{L^2(\partial \Omega; \mathbb{C}^n)}$. Moreover, we use for $s>0$ the symbols $H^s(\Omega; \mathbb{C}^n)$ and $H^s(\partial \Omega; \mathbb{C}^n)$ for the $L^2$-based Sobolev spaces on $\Omega$ and $\partial \Omega$, respectively. For $s \in [0,1]$, we will need the space $H^{-s}(\partial \Omega; \mathbb{C}^n)$, which is defined as the dual space of $H^{s}(\partial \Omega; \mathbb{C}^n)$, and we denote the associated sesquilinear duality product by $\langle \cdot, \cdot \rangle_{H^{s}(\partial \Omega; \mathbb{C}^n) \times H^{-s}(\partial \Omega; \mathbb{C}^n)}$. For a function $u \in H^1(\Omega; \mathbb{C}^n)$ we denote its Dirichlet trace by $u|_{\partial \Omega} \in H^{1/2}(\partial \Omega; \mathbb{C}^n)$.

Recall that the wedge $\Omega_+$ is defined in~\eqref{def_Omega_+}, that $\Omega_- = \mathbb{R}^2 \setminus \overline{\Omega_+}$, and $\nu$ is the outer unit normal vector field at $\Gamma := \partial \Omega_+$ that is pointing outwards from $\Omega_+$, cf. Figure~\ref{f-geometry}.
\begin{figure}[h]
\centering
\includegraphics[scale=1]{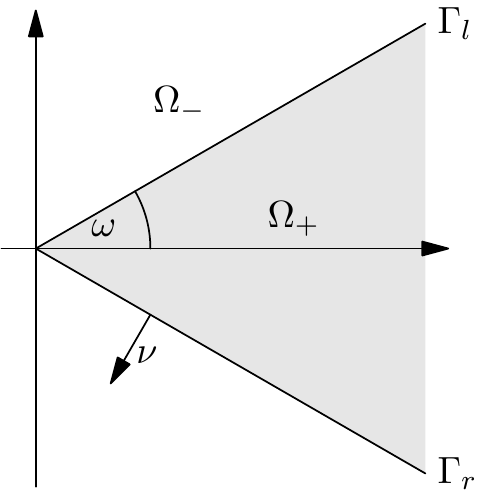}
\caption{Geometric setting. The domain $\Omega_+$ is depicted in gray.}
\label{f-geometry}
\end{figure}
We are also going to make use of the notations
\begin{equation} \label{def_Gamma_l_r}
  \begin{split}
    \Gamma_l := \big\{ (r \cos \omega, r \sin \omega): r > 0 \big\} \quad \text{and} \quad
    \Gamma_r := \big\{ (r \cos \omega, -r \sin \omega): r > 0 \big\},
  \end{split}
\end{equation}
so that $\Gamma = \overline{\Gamma_l \cup \Gamma_r}$.
A function (spinor) $u\in L^2(\RR^2;\CC^2)$ can be decomposed into two parts corresponding to the domains $\Omega_\pm$,
\begin{align*}
    u=u_+\oplus u_-\in L^2(\Omega_+;\CC^2)\oplus L^2(\Omega_-;\CC^2),~~\text{ where }~~ u_\pm :=u|_{\Omega_\pm}.
\end{align*}

\section{Singular interaction supported on a broken line}\label{s-wedge}

Self-adjoint extensions of the operator $S_\tau$ in equation \eqref{def_S_tau} were studied in \cite{FL, PVDB} along with the natural analog of $S_\tau$ with the Lorentz scalar $\delta$-shell interaction supported on the star graph~\cite{FL} and the boundary of a curvilinear polygon~\cite{PVDB}. These self-adjoint extensions are the starting point for the spectral analysis in Sections \ref{section_symmetry_spectrum}-\ref{section_discrete}.

Recall that $u \in H^1(\mathbb{R}^2 \setminus \Gamma; \mathbb{C}^2)$ belongs to $\dom S_\tau$ if and only if $u_-|_\Gamma = M u_+|_\Gamma$, where the matrix $M$ is defined by~\eqref{def_M}. We remark that for $\tau \neq \pm 2$ the matrix $M$ is invertible and the transmission condition is equivalent to
\begin{equation} \label{jump_condition1}
  u_+|_\Gamma = M^{-1} u_-|_\Gamma \quad \text{with} \quad M^{-1} = \frac{4+\tau^2}{4-\tau^2} \sigma_0 - \frac{4 \tau}{4 - \tau^2} i \sigma_3 (\sigma \cdot \nu)
\end{equation}
and also to
\begin{align}\label{e-transmission}
-i(\sigma \cdot \nu)\left(u_+|_{\Gamma}-u_-|_{\Gamma}\right)=\frac{\tau\sigma_3}{2}\left(u_+|_{\Gamma}+u_-|_{\Gamma}\right).
\end{align}

The deficiency indices of $S_\tau$ were identified in \cite[Corollary 2.16\,(i)]{FL} by switching to polar coordinates and analyzing the orthogonal decomposition of $S_\tau$. This orthogonal decomposition is only possible for $m=0$, which we will assume in the following. However, since $m \sigma_3$ is a bounded perturbation, this does not affect the results about self-adjointness. Moreover, since the straight line was already fully studied in \cite{BHT23}, we will assume that $\omega \in (0,\frac{\pi}{2})$ in the following.

The angular component of the aforementioned orthogonal decomposition of $S_\tau$ is commonly referred to as the spin-orbit operator and is defined by the linear map
\begin{equation}\label{e-angular}
\begin{aligned}
J_\tau\f&:=\left(-i\sigma_3\f_+'+\frac{\f_+}{2}
\right)\oplus\left(-i\sigma_3\f_-'+\frac{\f_-}{2}
\right), \\
\dom J_\tau&:=\Big\{\f=\f_+\oplus\f_-\in H^1((-\omega, \omega); \CC^2)\oplus H^1((\omega, 2 \pi - \omega); \CC^2)\colon \\
&\hspace{12em}
\f_-(\omega)=M_l
\f_+(\omega),~ 
\f_-(2\pi-\omega)=M_r
\f_+(-\omega)\Big\},
\end{aligned}
\end{equation}
where the matrices $M_l$ and $M_r$ are the restrictions of $M$ in~\eqref{def_M} onto $\Gamma_l$ and $\Gamma_r$, respectively, and  given by
\begin{equation} \label{def_M_l_r}
  M_l = \frac{1}{1-\frac{1}{4}\tau^2}
\begin{pmatrix}
1+\frac{1}{4}\tau^2 & e^{-i\w}\tau \\
e^{i\w}\tau & 1+\frac{1}{4}\tau^2
\end{pmatrix}
\quad \text{and} \quad
M_r = \frac{1}{1-\frac{1}{4}\tau^2}
\begin{pmatrix}
1+\frac{1}{4}\tau^2 & -e^{i\w}\tau \\
-e^{-i\w}\tau & 1+\frac{1}{4}\tau^2
\end{pmatrix}.
\end{equation}
It was established in~\cite[Proposition 2.3]{FL} that the operator $J_\tau$ is self-adjoint in the Hilbert space $L^2((-\w,\w);\CC^2)\oplus L^2((\w,2\pi-\w);\CC^2)$ with purely discrete spectrum.
Spectral properties of $J_\tau$ were determined in \cite[Propositions 2.8, 2.11, 2.12]{FL} and will play a role later.
In the following, we write $n_\lambda := \dim \ker(J_\tau-\lambda)$, $\lambda \in\RR$.
\begin{lem}\label{l-eigenvalues}
Let $\w\in(0,\frac{\pi}{2})$ and $\tau\in\RR\setminus\{-2,0,2\}$. Then, $\sigma(J_\tau)$ is symmetric about the origin,
$n_\la = n_{-\la}\le2$ for any $\la\in\sigma(J_\tau)$, $0,\frac12\notin\sigma(J_\tau)$, and $J_\tau$ has one simple eigenvalue within the interval $(0,\frac{1}{2})$.
\end{lem}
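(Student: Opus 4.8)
The plan is to solve the eigenvalue equation on each of the two arcs (where it decouples because $\sigma_3$ is diagonal), to encode the two transmission conditions of \eqref{e-angular} in a homogeneous $2\times 2$ linear system, and then to read off all four assertions from the resulting characteristic equation. Since $\sigma_3^2=\sigma_0$, on each of the intervals $(-\w,\w)$ and $(\w,2\pi-\w)$ the equation $J_\tau\f=\la\f$ becomes $\f'=i(\la-\tfrac12)\sigma_3\f$, so every solution has the form $\f(t)=\big(c_1e^{i(\la-1/2)t},\,c_2e^{-i(\la-1/2)t}\big)$. I write $\f_+$ on $(-\w,\w)$ with constants $(a_1,a_2)$ and $\f_-$ on $(\w,2\pi-\w)$ with constants $(b_1,b_2)$.

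Next I impose the transmission conditions. From $(1+\tfrac14\tau^2)^2-\tau^2=(1-\tfrac14\tau^2)^2$ one gets $\det M_l=\det M_r=1$, so both matrices in \eqref{def_M_l_r} are invertible; then $\f_-(\w)=M_l\f_+(\w)$ determines $(b_1,b_2)$ from $(a_1,a_2)$, and substituting into $\f_-(2\pi-\w)=M_r\f_+(-\w)$ produces a homogeneous $2\times 2$ system for $(a_1,a_2)$ with coefficient matrix $A(\la)$ whose rows are
\begin{equation*}
\big(\,p(1-e^{-i\beta}),\ q(e^{-i\al}+e^{-i\beta}e^{i\al})\,\big)
\quad\text{and}\quad
\big(\,q(e^{i\al}+e^{i\beta}e^{-i\al}),\ p(1-e^{i\beta})\,\big),
\end{equation*}
where $\al=2\w\la$, $\beta=2\pi\la-\pi$, $p=\frac{1+\tau^2/4}{1-\tau^2/4}$ and $q=\frac{\tau}{1-\tau^2/4}$; note $p^2-q^2=1$, so $p^2=1+q^2>0$, and $q\neq0$ because $\tau\neq0$. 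Using $|1-e^{i\beta}|^2=4\sin^2(\beta/2)$ and the analogous simplification of the product of the off-diagonal entries, a short computation yields
\begin{equation*}
\det A(\la)=4\Big(p^2\cos^2(\pi\la)-q^2\sin^2\big((\pi-2\w)\la\big)\Big).
\end{equation*}
Hence $\la\in\sigma(J_\tau)$ if and only if $p^2\cos^2(\pi\la)=q^2\sin^2\big((\pi-2\w)\la\big)$, and $n_\la=2-\rk A(\la)\le 2$, the $\la$-eigenspace being parametrized by $\ker A(\la)$. Moreover $\la\mapsto-\la$ sends $e^{\pm i\al}\mapsto e^{\mp i\al}$ and $e^{\pm i\beta}\mapsto e^{\mp i\beta}$, so $A(-\la)=\sigma_1 A(\la)\sigma_1$; thus $\det A$ is even in $\la$ and $\rk A(-\la)=\rk A(\la)$, which already gives that $\sigma(J_\tau)$ is symmetric about $0$ and that $n_\la=n_{-\la}$.

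It remains to study $g(\la):=p^2\cos^2(\pi\la)-q^2\sin^2\big((\pi-2\w)\la\big)$ on $[0,\tfrac12]$. One has $g(0)=p^2>0$ and $g(\tfrac12)=-q^2\cos^2\w<0$ (here $\w\in(0,\tfrac{\pi}{2})$ and $q\neq0$), so $\det A(0)\neq0$ and $\det A(\tfrac12)\neq0$, i.e.\ $0,\tfrac12\notin\sigma(J_\tau)$. Further,
\begin{equation*}
g'(\la)=-\pi p^2\sin(2\pi\la)-(\pi-2\w)\,q^2\sin\big(2(\pi-2\w)\la\big)<0\qquad\text{for }\la\in(0,\tfrac12),
\end{equation*}
since $2\pi\la\in(0,\pi)$ and $2(\pi-2\w)\la\in(0,\pi-2\w)\subset(0,\pi)$ make both sines strictly positive. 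Therefore $g$ is strictly decreasing on $[0,\tfrac12]$ and has exactly one zero $\la_0\in(0,\tfrac12)$; as $\la_0\notin\ZZ+\tfrac12$, the matrix $A(\la_0)$ is nonzero (vanishing of $A(\la)$ forces $e^{i\beta}=1$, i.e.\ $\la\in\ZZ+\tfrac12$), so $\rk A(\la_0)=1$ and $\la_0$ is a simple eigenvalue. This establishes all four statements.

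I expect the determinant computation to be the only genuine obstacle, namely collapsing the product of the exponentials coming from the two transmission conditions into the displayed trigonometric form; the identity $\det M_l=\det M_r=1$ (equivalently $p^2-q^2=1$) is exactly what drives both the factorization and the sign bookkeeping $p^2>q^2>0$, and one should double-check that $\det A(\la)$ records all eigenvalues with their correct multiplicities.
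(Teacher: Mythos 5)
Your proof is correct, and I verified the key computation independently: with $\mu=\la-\tfrac12$ the two transmission conditions do reduce to the $2\times2$ system with exactly the rows you display (the shifts by $\tfrac12$ in the exponentials and the phases $e^{\pm i\w}$ in $M_{l/r}$ cancel so that only $\al=2\w\la$ and $\beta=2\pi\la-\pi$ survive), and $\det A(\la)=4\bigl(p^2\cos^2(\pi\la)-q^2\sin^2((\pi-2\w)\la)\bigr)$ with $p^2-q^2=\det M_l=1$. The monotonicity of $g$ on $(0,\tfrac12)$, the sign change between $g(0)=p^2$ and $g(\tfrac12)=-q^2\cos^2\w$, and the simplicity argument via $A(\la_0)\neq 0$ are all sound. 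Note that the paper itself does not prove this lemma but quotes it from [FL, Propositions 2.8, 2.11, 2.12], so your argument is a self-contained replacement for that citation; the only point you lean on implicitly is that $\sigma(J_\tau)$ consists entirely of eigenvalues, which is licensed by the discreteness of the spectrum stated just before the lemma (from [FL, Proposition 2.3]), and that eigenfunctions in $H^1$ of a first-order constant-coefficient system are automatically classical solutions, which is a standard bootstrap. The observation $A(-\la)=\sigma_1A(\la)\sigma_1$ is a clean way to get $n_\la=n_{-\la}$ rather than just the evenness of $\det A$; this is essentially the charge-conjugation symmetry that the paper exploits later at the level of $A_{\tau,1}$ in Theorem \ref{t-symmetry}.
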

One can then show that $S_\tau$ (for $m=0$) is given by the orthogonal sum of half-line Dirac operators with off-diagonal Coulomb potentials. These half-line Dirac operators are closed, symmetric and densely defined operators in the Hilbert space $L^2(\RR_+;\CC^2)$. They are denoted, for $\la> 0$, $\la\ne \frac12$, by
\begin{equation}\label{eq:1dDirac}
\bd_\la\psi :=
\begin{pmatrix}
0 & -\frac{d}{dr}-\frac{\la}{r} \\
\frac{d}{dr}-\frac{\la}{r} &  0
\end{pmatrix}\psi,\qquad
\dom\bd_\la:=H_0^1(\RR_+;\CC^2).\\
\end{equation}  
For $m=0$ the operator $S_\tau$ can then be decomposed by \cite[Theorem 2.14]{FL} as
\[
	S_\tau \simeq\bigoplus_{\la\in\sigma(J_\tau)\cap(0,\infty)} \bigoplus_{j=1}^{n_\lambda}\,\bd_\la.
\]
In particular, each $\bd_\la$ is self-adjoint when $\la>1/2$. When $\la\in(0,1/2)$, the deficiency indices of $\bd_\la$ are $(1,1)$. In particular, this shows that $S_\tau$ has also deficiency indices $(1,1)$. The corresponding deficiency elements are expressed in \cite[Lemma 4.1]{FL} in terms of modified Bessel functions of the second kind and order $\nu\in\RR$, denoted $K_\nu(\cdot)$. These modified Bessel functions play a key role in determining self-adjoint extensions of $S_\tau$ in~\cite[Theorem 4.2]{FL}, a simplified variant of which is formulated below. 
For its formulation, let $\varphi_\la$ be the eigenfunction of $J_\tau$ corresponding to the unique simple eigenvalue $\la$ lying in the interval $(0,1/2)$; here
we extend $\varphi_\la$ to the interval $[0,2\pi)$ by periodicity, i.e.~$\varphi_\la(\te+2\pi) = \varphi_\la(\te)$. We then define the functions
\begin{equation} \label{def_v_pm}
  v_\pm(r \cos \te, r \sin \te) = K_{\la-1/2}(r)\f_\la(\te) \pm K_{\la+1/2}(r)(\sigma_1 \cos\te + \sigma_2 \sin\te) \f_\la(\te), 
\end{equation}
for $r > 0$, $\te \in [0, 2 \pi)$.

\begin{theo}\label{t-nbreakdown}
	Let $\omega \in (0, \frac{\pi}{2})$, $\tau\in\mathbb{R} \setminus \{-2,0,2\}$, 
	and $v_\pm$ be given by~\eqref{def_v_pm}. All self-adjoint extensions of $S_\tau$ are in one-to-one correspondence with complex numbers $z$ of modulus one. Moreover, the self-adjoint extension $A_{\tau,z}$ of $S_\tau$ corresponding to $z$ is characterized by 
	\[
	\begin{aligned}
	    \dom A_{\tau,z} &= 
	    \bigg\{u=u_0+cv_+ + cz v_-\in L^2(\RR^2;\CC^2)\colon u_0\in \dom S_\tau, ~c \in \mathbb{C}  \bigg\},
	      \\
	      A_{\tau,z}u &=A_{\tau,z} (u_0+ cv_+ + cz v_-) = S_\tau u_0 + (i \sigma_0 + m \sigma_3) cv_+ + (-i \sigma_0 + m \sigma_3) czv_-.
	\end{aligned}
	\]
\end{theo}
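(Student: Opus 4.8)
The plan is to combine the abstract theory of self-adjoint extensions of symmetric operators with deficiency indices $(1,1)$ (boundary triplets / von Neumann formulas) with the concrete description of the deficiency space of $S_\tau$ obtained in \cite{FL}. Since it is stated in the excerpt (via the decomposition $S_\tau \simeq \bigoplus_{\la}\bigoplus_{j=1}^{n_\la}\bd_\la$ and the fact that exactly one $\la\in\sigma(J_\tau)$ lies in $(0,\tfrac12)$, with that eigenvalue simple) that $S_\tau$ has deficiency indices $(1,1)$, von Neumann's theorem immediately tells us that the self-adjoint extensions are parametrized by the unitary maps $\ker(S_\tau^*-i)\to\ker(S_\tau^*+i)$, i.e.\ by a unimodular complex number $z$. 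So the first step is to recall this parametrization and to note that for every $z\in\mathbb{T}$ one has
\[
\dom A_{\tau,z} = \dom S_\tau \dotplus \operatorname{span}\{\,g_+ + z\,g_-\,\},
\]
where $g_\pm$ spans $\ker(S_\tau^* \mp i)$, with $A_{\tau,z}$ acting as $S_\tau^*$ on this domain.

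The second step is to identify the deficiency elements $g_\pm$ explicitly. Here I would invoke \cite[Lemma 4.1]{FL}: after passing to polar coordinates and using the decomposition of $S_\tau$ (for $m=0$) into half-line Dirac operators $\bd_\la$, only the single channel $\la\in(0,\tfrac12)$ contributes a nontrivial deficiency space, and the corresponding deficiency element of $\bd_\la$ at spectral parameter $\pm i$ is built from the modified Bessel functions $K_{\la\mp1/2}$. Re-expanding this back to $\mathbb{R}^2$ and pairing it with the angular eigenfunction $\f_\la$ gives precisely the functions $v_\pm$ of \eqref{def_v_pm}; one checks (this is the computation already carried out in \cite{FL}) that $S_\tau^* v_\pm = \pm i\, v_\pm$ in the massless case. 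The only remaining point is the mass term: since $m\sigma_3$ is bounded and symmetric, $S_\tau$ (with mass) differs from the massless operator by $m\sigma_3$, hence $S_\tau^*$ with mass equals (massless $S_\tau^*$) $+\,m\sigma_3$, and therefore on the extension domain
\[
A_{\tau,z}(u_0 + c v_+ + c z v_-) = S_\tau u_0 + c(i\sigma_0 + m\sigma_3)v_+ + c z(-i\sigma_0 + m\sigma_3)v_-,
\]
which is exactly the claimed action formula. That $u_0\in\dom S_\tau$ can be taken to carry the mass as well is automatic since $\dom S_\tau$ is mass-independent.

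The third, essentially bookkeeping, step is to reconcile the normalization: the abstract parametrization uses $g_+ + z g_-$ with $g_\pm$ normalized deficiency vectors, whereas the statement writes $c v_+ + c z v_-$ with the same coefficient $c$ in front of both $v_+$ and $v_-$. One must check that $\|v_+\|_{L^2(\mathbb{R}^2)} = \|v_-\|_{L^2(\mathbb{R}^2)}$, so that rescaling $v_\pm$ to unit vectors does not introduce a $z$-dependent mismatch; this follows from \eqref{def_v_pm} because the cross terms between the $K_{\la-1/2}\f_\la$ and $K_{\la+1/2}(\sigma\cdot x/r)\f_\la$ pieces are purely imaginary after integrating in $\te$ (the matrix $\sigma\cdot x/r$ is self-adjoint with zero diagonal against $\f_\la$ in a way that kills the real part), so the $\pm$ sign only flips an imaginary cross term and leaves $\|v_\pm\|^2$ unchanged. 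Granting this, the correspondence $z\mapsto A_{\tau,z}$ is a bijection onto the set of all self-adjoint extensions, completing the proof.

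The main obstacle I anticipate is \emph{not} the abstract extension theory — that is routine once the deficiency indices are known — but the explicit verification that the functions $v_\pm$ in \eqref{def_v_pm}, which mix the Bessel functions $K_{\la\pm1/2}$ with the angular eigenfunction $\f_\la$ of the spin-orbit operator $J_\tau$, genuinely lie in $\ker(S_\tau^*\mp i)$ in the two-dimensional (not merely the reduced half-line) sense, together with the $L^2$-norm-matching observation above. Both of these are calculations with modified Bessel function identities (the recurrence and derivative relations for $K_\nu$) and with the anticommutation relations \eqref{Pauli_anti_commutation} of the Pauli matrices; they are carried out in \cite[Lemma~4.1, Theorem~4.2]{FL}, so in the present paper this theorem should be presented as a specialization/reformulation of that reference rather than proved from scratch.
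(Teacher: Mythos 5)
The paper gives no proof of this theorem at all: it is imported, in simplified form, from \cite[Theorem~4.2]{FL} (with the deficiency elements taken from \cite[Lemma~4.1]{FL}), which is exactly the reading you arrive at in your last paragraph. Your outline --- von Neumann's parametrization for deficiency indices $(1,1)$, identification of $v_\pm$ as the deficiency elements of the \emph{massless} operator, and the observation that the bounded symmetric perturbation $m\sigma_3$ leaves the extension domains unchanged while shifting the action by $m\sigma_3$ --- is the correct and standard route, and is consistent with how the paper treats the result.

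One caveat on your third step: the justification you give for $\|v_+\|_{L^2(\RR^2)}=\|v_-\|_{L^2(\RR^2)}$ is not right as stated. The cross term is
$2K_{\la-1/2}(r)K_{\la+1/2}(r)\,\langle \f_\la(\te),(\sigma\cdot x/|x|)\,\f_\la(\te)\rangle_{\CC^2}$,
and since $\sigma\cdot x/|x|$ is Hermitian this inner product is \emph{real} pointwise, not purely imaginary; so nothing is ``killed'' for free. The vanishing of its integral over $\te\in[0,2\pi)$ (which is indeed what the equality of norms, and hence the clean $|z|=1$ labelling with the same coefficient $c$ in front of $v_+$ and $zv_-$, requires) has to be checked from the explicit form of the angular eigenfunction $\f_\la$ of $J_\tau$; this computation is part of what \cite{FL} supplies and should not be waved away. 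Apart from that, the proposal is sound.
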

\begin{rem}
The action of any self-adjoint extension $A_{\tau,z}$ of $S_\tau$ can be easily determined via the definition of distributional derivatives as
  \begin{equation*}
    A_{\tau,z} u = (-i \sigma \cdot \grad + m \sigma_3) u_+ \oplus (-i \sigma \cdot \grad + m \sigma_3) u_-,\quad u = u_+ \oplus u_- \in \dom A_{\tau,z},
  \end{equation*}
  where  $\sigma \cdot \grad u_\pm$ has to be understood in the distributional sense.
\end{rem}

The decomposition of general functions in the domain of $A_{\tau,z}$ given in Theorem \ref{t-nbreakdown} provides a tool to classify the regularity of functions in the domain. It is even possible to single out a unique distinguished self-adjoint extension whose operator domain is the most regular in the scale of Sobolev spaces. A similar statement is made for Dirac operators with infinite mass boundary conditions in sectors in \cite[Proposition 1.8]{TOB}. For a proof of the following result, we refer to \cite[Theorem~4.4]{FL}. We remark that this distinguished self-adjoint extension will also play a role in our spectral analysis in Section~\ref{section_symmetry_spectrum}.

\begin{theo}\label{t-distinguished}
	Let $\omega \in (0, \frac{\pi}{2})$ and $\tau\in\mathbb{R} \setminus \{-2,0,2\}$. The so called `distinguished' self-adjoint extension of $S_\tau$, the only extension such that 
	\begin{align*}
	    \dom A_{\tau,z}\subset H^{1/2}(\Omega_+; \CC^2)\oplus H^{1/2}(\Omega_-; \CC^2),
	\end{align*}
	is the extension corresponding to $z=1$. In fact, the slightly stronger statement 
	\begin{align*}
	\dom A_{\tau,1}\subset H^s(\Omega_+; \CC^2)\oplus H^s(\Omega_-; \CC^2),
	\end{align*}
	holds for all $s<1/2+\min\{\sigma(J_\tau)\cap(0,\infty)\}$.
\end{theo}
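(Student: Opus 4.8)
The plan is to establish Theorem~\ref{t-distinguished} by combining the explicit description of $\dom A_{\tau,z}$ from Theorem~\ref{t-nbreakdown} with the known regularity of the building blocks $u_0\in\dom S_\tau$ and $v_\pm$. First I would note that any $u\in\dom A_{\tau,z}$ has the form $u=u_0+c(v_++zv_-)$ with $u_0\in\dom S_\tau\subset H^1(\Omega_+;\CC^2)\oplus H^1(\Omega_-;\CC^2)$; since $H^1\subset H^s$ for every $s\le 1$, the regularity of $u$ near $\Gamma$ and near the corner is entirely governed by the combination $v_++zv_-$. So the task reduces to determining, for which $z$ of modulus one, the spinor $w_z:=v_++zv_-$ lies in $H^s(\Omega_+;\CC^2)\oplus H^s(\Omega_-;\CC^2)$.

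Next I would analyze the local behavior of $v_\pm$ using their definition in~\eqref{def_v_pm}, $v_\pm(r\cos\te,r\sin\te)=K_{\la-1/2}(r)\f_\la(\te)\pm K_{\la+1/2}(r)(\sigma_1\cos\te+\sigma_2\sin\te)\f_\la(\te)$, where $\la=\min\{\sigma(J_\tau)\cap(0,\infty)\}\in(0,\tfrac12)$ by Lemma~\ref{l-eigenvalues}. Away from the origin $K_\nu$ is smooth and decays exponentially, so $v_\pm$ — and hence $w_z$ — is automatically in $H^1$ on the complement of any neighborhood of the corner; the only place where $H^1$-regularity can fail is at $r=0$. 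I would therefore insert the small-argument asymptotics $K_\nu(r)\sim \tfrac12\Gamma(\nu)(r/2)^{-\nu}$ for $\nu>0$ (and $K_0(r)\sim-\log r$, not relevant here since $\la\pm\tfrac12\ne 0$, using $0,\tfrac12\notin\sigma(J_\tau)$). Thus near the origin $v_\pm$ behaves like $c_1 r^{1/2-\la}\f_\la(\te)\pm c_2 r^{-1/2-\la}(\sigma_1\cos\te+\sigma_2\sin\te)\f_\la(\te)$. The leading singular term of order $r^{-1/2-\la}$ is \emph{not} in $H^s$ for $s\ge 1/2-\la$ on its own; but $v_+$ and $v_-$ have this singular term with opposite sign, so in the combination $v_++zv_-$ its coefficient is proportional to $(1-z)$. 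Hence $w_z$ has the dangerous $r^{-1/2-\la}$ piece cancelled precisely when $z=1$, and for $z=1$ what remains is the $r^{1/2-\la}$ term (plus lower-order corrections from the Bessel expansions of order $r^{1/2-\la+2k}$ and $r^{3/2-\la+2k}$), which lies in $H^s$ for all $s<1+(1/2-\la)$; a standard computation in polar coordinates — the radial density $r\,|r^{1/2-\la}|^2$ is integrable near $0$, and one checks that first-order (distributional) derivatives of $r^{1/2-\la}\f_\la(\te)$ are in $L^2$ since $\f_\la\in H^1$ of the angular interval and $-1/2-\la+1=1/2-\la>-1$, with fractional-order gains handled by interpolation — gives $w_1\in H^s$ for $s<1/2+\la$. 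Conversely, for $z\ne 1$ the surviving $r^{-1/2-\la}$ term forces $w_z\notin H^{1/2}$, since $r\,|r^{-1/2-\la}|^2=r^{-2\la}$ and $\int_0 r^{-2\la}\,dr$ with a genuine singularity, combined with the $H^{1/2}$ trace theorem applied to a putative $H^{1/2}$ extension, produces a trace that is not in $L^2(\Gamma;\CC^2)$ near the corner — a contradiction. Hence $z=1$ is the unique extension contained in $H^{1/2}(\Omega_+;\CC^2)\oplus H^{1/2}(\Omega_-;\CC^2)$, and for that $z$ one in fact gets membership in $H^s$ for all $s<1/2+\min\{\sigma(J_\tau)\cap(0,\infty)\}$.

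Finally I would assemble the two pieces: $u=u_0+c\,w_1$ with $u_0\in H^1\subset H^s$ for $s<1$ and $w_1\in H^s$ for $s<1/2+\la$; since $\la<1/2$ we have $1/2+\la<1$, so $H^s$-membership of $u$ for $s<1/2+\la$ holds, proving both displayed inclusions. The main obstacle I anticipate is the careful bookkeeping of fractional Sobolev regularity at the corner: establishing that $r^{1/2-\la}\f_\la(\te)$ (and the correction terms) genuinely belong to $H^s$ up to, but not including, the threshold $1/2+\la$, and that the complementary singular term fails $H^{1/2}$, requires either an explicit Fourier/Mellin-type estimate in the wedge or a clean interpolation argument between an $L^2$ bound and an $H^1$ bound on a truncated profile — together with verifying that the presence of two matching conditions at $\te=\pm\omega$ in $\dom J_\tau$ does not obstruct the angular factor $\f_\la$ from having the needed $H^1$ regularity across the junctions (which it does, being an eigenfunction of the self-adjoint $J_\tau$). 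One can, alternatively, cite the corresponding estimates already carried out in~\cite[Theorem~4.4]{FL} and simply reorganize them, but I would still want to make the cancellation mechanism $(1-z)$ in the coefficient of the leading singularity explicit, as that is the conceptual heart of the uniqueness claim.
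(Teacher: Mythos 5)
The paper does not prove this theorem in-text; it defers to \cite[Theorem~4.4]{FL}. Your overall strategy --- reduce everything to the regularity of $w_z:=v_++zv_-$ near the corner, expand the modified Bessel functions at $r=0$, and observe that the singular summand enters $w_z$ with the factor $(1-z)$ --- is the right one and is, in essence, the argument of the cited reference. However, there is a concrete computational error at the heart of your regularity count. Since $\la\in(0,\tfrac12)$, the order $\la-\tfrac12$ of the first Bessel function is negative, and $K_{\la-1/2}=K_{1/2-\la}\sim c\,(r/2)^{-(1/2-\la)}=c\,r^{\la-1/2}$ as $r\to0^+$; the first summand of $v_\pm$ therefore behaves like $r^{\la-1/2}$, \emph{not} like $r^{1/2-\la}$ as you wrote. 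This matters. With your exponent, the term surviving at $z=1$ would lie in $H^s$ near the corner for all $s<1+(1/2-\la)=3/2-\la>1$ --- which is indeed the threshold you state mid-argument --- so $w_1$ would belong to $H^1(\Omega_+;\CC^2)\oplus H^1(\Omega_-;\CC^2)$ and, its traces then satisfying the transmission condition in the strong sense, to $\dom S_\tau$; that would force $A_{\tau,1}=S_\tau$, impossible since $S_\tau$ has deficiency indices $(1,1)$ for $\omega\in(0,\frac{\pi}{2})$. Your final claim ``$w_1\in H^s$ for $s<1/2+\la$'' is correct, but it does not follow from the asymptotics you wrote down (note $3/2-\la\neq 1/2+\la$ for $\la<\tfrac12$, so your own two thresholds are inconsistent); it follows from the corrected exponent, since $r^{\la-1/2}$ times a smooth angular profile lies in $H^s$ near the vertex of a wedge precisely for $s<(\la-1/2)+1=\la+\tfrac12$.

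The remainder is sound once this is repaired. At $z=1$ the \emph{entire} summand $K_{\la+1/2}(r)(\sigma_1\cos\te+\sigma_2\sin\te)\f_\la(\te)$ drops out of $v_++zv_-$, not merely its leading term, so no bookkeeping of the corrections $r^{-1/2-\la+2k}$ is needed; and for $z\neq1$ the surviving $r^{-1/2-\la}$ profile fails to be in $H^{1/2}$ because its $H^s$ threshold is $-\tfrac12-\la+1=\tfrac12-\la<\tfrac12$ (this can be argued directly; the detour through the trace theorem is unnecessary, though one should note that the angular factor $(\sigma\cdot\hat x)\f_\la$ does not vanish identically since $\sigma\cdot\hat x$ is unitary). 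You should also record why the eigenvalue $\la\in(0,\tfrac12)$ of Lemma~\ref{l-eigenvalues} coincides with $\min\{\sigma(J_\tau)\cap(0,\infty)\}$, which you use implicitly when matching your threshold to the one in the statement.
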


\section{Quadratic Form}\label{ss-quadratic}

Recall that for  $\tau \in \mathbb{R} \setminus \{ -2, 0, 2 \}$ the operator $S_\tau$  is defined by equation \eqref{def_S_tau}. The main goal in this section is to find a convenient representation of $\| S_\tau u \|_{\mathbb{R}^2}^2$ for $u \in \dom S_\tau$, see Theorem~\ref{t-quadraticform}, which is the quadratic form associated with the self-adjoint operator $S^*_\tau S_\tau$. This operator will play a central role in the spectral analysis of the self-adjoint extensions $A_{\tau,z}$ of $S_\tau$, as, e.g., the difference of the resolvents of $S^*_\tau S_\tau$ and $(A_{\tau,z})^2$ will be compared in Section~\ref{section_sigma_ess} to determine the essential spectrum of $A_{\tau,z}$. Moreover, this quadratic form will play a crucial role in the proof of the statements about the discrete spectrum of $A_{\tau,z}$ in Theorem~\ref{theorem_intro}. 

In the following we are going to make use of the tangential derivative $\partial_t$ on the boundary of a Lipschitz hypograph $\Omega \subset \mathbb{R}^2$. For its definition, let $\gamma\colon\mathbb{R} \rightarrow \mathbb{R}^2$ be an arc-length parametrization of $\partial \Omega$ such that the unit normal vector field $\nu$, which is pointing outwards of $\Omega$, is given by $\nu = (-\dot{\gamma}_2, \dot{\gamma}_1)$ almost everywhere. 
Then, for $\varphi \in H^1(\partial \Omega)$ we define 
\begin{equation} \label{def_partial_t}
  \partial_t \varphi(x) := ( \varphi \circ \gamma)'(x), \qquad x \in \mathbb{R}.
\end{equation}
Clearly, for any $s \in [0, 1]$ the tangential derivative $\partial_t$ gives rise to a bounded operator
\begin{equation*}
  \partial_t: H^s(\partial \Omega) \rightarrow H^{s-1}(\partial \Omega).
\end{equation*}
We will use that the tangential derivative of the trace of a function $u \in H^2(\Omega)$ is given by
\begin{equation*}
  \partial_t (u|_{\partial \Omega}) = \dot{\gamma} \cdot (\grad u)|_{\partial \Omega}  = \nu_2 (\partial_1 u)|_{\partial \Gamma} - \nu_1 (\partial_2 u)|_{\partial \Gamma}.
\end{equation*}
Now, we are prepared to state some of the finer details for the proof of Theorem~\ref{t-quadraticform}.

\begin{lem}\label{l-extension}
Let $\Omega\subset\RR^2$ be a Lipschitz hypograph and let $\nu=(\nu_1, \nu_2)$ be the unit normal vector field at $\partial \Omega$ which points outwards of $\Omega$. Then for all $f,g\in H^1(\Omega;\CC^2)$ the following holds:
\begin{align}\label{e-extension}
\big\langle(\sigma\cdot\grad)f,(\sigma\cdot\grad)g\big\rangle_{\Omega}=\big\langle\grad f,\grad g\big\rangle_{\Omega}-\big\langle f|_{\partial \Omega}, i\sigma_3 \partial_t (g|_{\partial \Omega})\big\rangle_{H^{1/2}(\partial \Omega; \mathbb{C}^2)\times H^{-1/2}(\partial \Omega; \mathbb{C}^2)}.
\end{align}
\end{lem}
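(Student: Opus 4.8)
\emph{Proof plan.} The plan is to prove \eqref{e-extension} first for smooth, compactly supported spinors, where it reduces to a classical integration by parts, and then to obtain the general case by density and continuity. For the reduction I would use that the restrictions to $\Omega$ of functions in $C_c^\infty(\RR^2;\CC^2)$ are dense in $H^1(\Omega;\CC^2)$ — a Lipschitz hypograph admits a bounded Sobolev extension operator to $\RR^2$, and $C_c^\infty(\RR^2;\CC^2)$ is dense in $H^1(\RR^2;\CC^2)$ — together with the observation that all three terms in \eqref{e-extension} depend continuously on the pair $(f,g)$ in the $H^1(\Omega;\CC^2)$-topology. Indeed, the left-hand side and $\langle\grad f,\grad g\rangle_\Omega$ are continuous because $\sigma\cdot\grad$ and $\grad$ map $H^1(\Omega;\CC^2)$ boundedly into $L^2(\Omega;\CC^2)$; the boundary term is continuous because the trace map $H^1(\Omega;\CC^2)\to H^{1/2}(\partial\Omega;\CC^2)$ is bounded for a Lipschitz hypograph, $\partial_t\colon H^{1/2}(\partial\Omega;\CC^2)\to H^{-1/2}(\partial\Omega;\CC^2)$ is bounded, multiplication by $i\sigma_3$ is bounded on $H^{-1/2}(\partial\Omega;\CC^2)$, and the duality pairing is continuous. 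For smooth $f,g$, the duality pairing on the right of \eqref{e-extension} is just the inner product of $L^2(\partial\Omega;\CC^2)$.

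For smooth, compactly supported $f,g$ I would argue as follows. Using that the $\sigma_j$ are Hermitian, write $\langle(\sigma\cdot\grad)f,(\sigma\cdot\grad)g\rangle_\Omega=\sum_{j,k}\int_\Omega\langle\partial_j f,\sigma_j\sigma_k\partial_k g\rangle_{\CC^2}\,dx$ and integrate by parts in $x_j$ (the Gauss--Green formula applies since $\Omega$ is Lipschitz and the integrands are compactly supported). The volume term produced is $-\int_\Omega\langle f,(\sigma\cdot\grad)^2 g\rangle_{\CC^2}\,dx=-\int_\Omega\langle f,\Delta g\rangle_{\CC^2}\,dx$, because the anti-commutation relations \eqref{Pauli_anti_commutation} and the symmetry of second derivatives give $(\sigma\cdot\grad)^2=\Delta\sigma_0$; a further integration by parts (Green's first identity) rewrites this as $\langle\grad f,\grad g\rangle_\Omega-\int_{\partial\Omega}\langle f,(\nu\cdot\grad)g\rangle_{\CC^2}\,d\sigma$. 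Collecting the two boundary contributions yields
\[
\langle(\sigma\cdot\grad)f,(\sigma\cdot\grad)g\rangle_\Omega=\langle\grad f,\grad g\rangle_\Omega+\int_{\partial\Omega}\big\langle f,[(\sigma\cdot\nu)(\sigma\cdot\grad)-(\nu\cdot\grad)]g\big\rangle_{\CC^2}\,d\sigma.
\]

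It then remains to identify the boundary operator $(\sigma\cdot\nu)(\sigma\cdot\grad)-(\nu\cdot\grad)\sigma_0$ on the trace of $g$. Let $t:=\dot\gamma=(\nu_2,-\nu_1)$ denote the unit tangent, so that $\{\nu,t\}$ is an orthonormal frame along $\partial\Omega$; decomposing the gradient along $\partial\Omega$ into its normal and tangential parts and applying $\sigma\cdot(\,\cdot\,)$ gives $(\sigma\cdot\grad)g|_{\partial\Omega}=(\sigma\cdot\nu)(\partial_\nu g)|_{\partial\Omega}+(\sigma\cdot t)\,\partial_t(g|_{\partial\Omega})$, where I use that $(\partial_t g)|_{\partial\Omega}=\partial_t(g|_{\partial\Omega})$ for $g\in H^2(\Omega)$ by the identity recalled above. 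From the elementary two-dimensional identity $(\sigma\cdot a)(\sigma\cdot b)=(a\cdot b)\sigma_0+i(a_1 b_2-a_2 b_1)\sigma_3$, a consequence of \eqref{Pauli_anti_commutation}, one reads off $(\sigma\cdot\nu)^2=\sigma_0$ and $(\sigma\cdot\nu)(\sigma\cdot t)=-i\sigma_3$, hence $(\sigma\cdot\nu)(\sigma\cdot\grad)g|_{\partial\Omega}=(\partial_\nu g)|_{\partial\Omega}\sigma_0-i\sigma_3\,\partial_t(g|_{\partial\Omega})$. Therefore the boundary operator equals $-i\sigma_3\partial_t$, the boundary term becomes $-\langle f|_{\partial\Omega},i\sigma_3\partial_t(g|_{\partial\Omega})\rangle_{L^2(\partial\Omega;\CC^2)}$, and \eqref{e-extension} follows for smooth $f,g$; the density argument of the first paragraph then finishes the proof. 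I expect the main obstacle to be precisely this boundary-term computation, which hinges on the special two-dimensional Clifford structure of $\sigma\cdot\grad$ and on the relation between $\partial_t$ and the tangential component of the gradient; the functional-analytic steps (density on an unbounded Lipschitz hypograph, boundedness of the trace map and of $\partial_t$ on $H^{\pm1/2}(\partial\Omega)$, and validity of Gauss--Green) are routine but must be invoked to legitimize the passage to general $H^1$-spinors.
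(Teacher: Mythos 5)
Your proposal is correct and follows essentially the same route as the paper's proof: two integrations by parts producing the boundary operator $(\sigma\cdot\nu)(\sigma\cdot\grad)-\nu\cdot\grad$, its algebraic identification with $-i\sigma_3\partial_t$ (you via the frame identity $(\sigma\cdot\nu)(\sigma\cdot t)=-i\sigma_3$, the paper via the equivalent component computation $\sigma_1\sigma_2(\nu_1\partial_2-\nu_2\partial_1)$), and a density-plus-continuity argument to reach general $H^1$ spinors (the paper only smooths $g$, using density of $H^2$ in $H^1$, while you smooth both arguments via an extension operator — an immaterial difference).
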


\begin{proof}
Let $f\in H^1(\Omega;\CC^2)$ and $g\in H^2(\Omega;\CC^2)$. Then, the identity $(\sigma \cdot \grad)^2 = \Delta$ and an application of Gauss' theorem yield
\begin{align*}
    \big\langle(\sigma\cdot\grad)f,(\sigma\cdot\grad)g\big\rangle_{\Omega}&=\big\langle(\sigma\cdot\nu)f|_{\partial \Omega},((\sigma\cdot\grad)g)|_{\partial \Omega}\big\rangle_{\partial\Omega}-\big\langle f,(\sigma\cdot\grad)^2g\big\rangle_{\Omega} \nonumber \\
    &=\big\langle f|_{\partial \Omega},(\sigma\cdot\nu)((\sigma\cdot\grad)g)|_{\partial \Omega}-\nu\cdot\grad g|_{\partial \Omega}\big\rangle_{\partial\Omega}+\big\langle \grad f,\grad g\big\rangle_{\Omega} \nonumber \\
    &=\big\langle f|_{\partial \Omega},\sigma_1\nu_1\sigma_2(\partial_2 g)|_{\partial \Omega}+\sigma_2\nu_2\sigma_1(\partial_1g)|_{\partial \Omega}\big\rangle_{\partial\Omega}+\big\langle \grad f,\grad g\big\rangle_{\Omega} \nonumber \\
    &=\big\langle f|_{\partial \Omega},\sigma_1 \sigma_2\left(\nu_1(\partial_2 g)|_{\partial \Omega}-\nu_2(\partial_1 g)|_{\partial \Omega}\right)\big\rangle_{\partial\Omega}+\big\langle \grad f,\grad g\big\rangle_{\Omega} \\
    &=-\big\langle f|_{\partial \Omega},\sigma_1 \sigma_2 \partial_t (g|_{\partial \Omega})\big\rangle_{\partial\Omega}+\big\langle \grad f,\grad g\big\rangle_{\Omega}. 
\end{align*}
This expression agrees with equation \eqref{e-extension}, as $\sigma_1\sigma_2=i\sigma_3$. Since $H^2(\Omega; \mathbb{C}^2)$ is dense in $H^1(\Omega; \mathbb{C}^2)$, a continuity argument yields the claim for $g \in H^1(\Omega; \mathbb{C}^2)$.
\end{proof}

It is now possible to show a useful representation for the quadratic form associated with the self-adjoint operator $S_\tau^* S_\tau$, i.e. of the expression 
\begin{align*}
\|S_\tau u\|^2_{\RR^2}&=\|(-i\sigma\cdot\grad)u+m\sigma_3 u\|^2_{\RR^2\setminus\Gamma}, \qquad u \in \dom S_\tau.
\end{align*}
The following result can be seen as a two-dimensional variant of \cite[Proposition~3.1]{HOBP} with the interaction support being the broken line; cf. also  \cite[Theorem~1.5]{ALTR17} and \cite[Proposition~14]{LO18} for similar formulas.

\begin{theo}\label{t-quadraticform}
Let $\tau \in \mathbb{R} \setminus \{ -2, 0, 2 \}$. Then, for any $u\in\dom S_\tau$, the following holds:
\begin{align}\label{e-quadraticform}
    \|S_\tau u\|^2_{\RR^2}=\|\grad u\|^2_{\RR^2\setminus\Gamma}+m^2\|u\|^2_{\RR^2}+\frac{2m}{\tau}\|u_+|_{\Gamma}-u_-|_{\Gamma}\|^2_{\Gamma}.
\end{align}
\end{theo}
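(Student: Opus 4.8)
The plan is to expand $\|S_\tau u\|^2_{\RR^2}$ region by region, apply Lemma~\ref{l-extension} on each of $\Omega_+$ and $\Omega_-$, and then reassemble the boundary contributions using the transmission condition encoded in~\eqref{e-transmission}. First, for $u = u_+\oplus u_-\in\dom S_\tau$ I would write
\begin{align*}
  \|S_\tau u\|^2_{\RR^2}
  &= \|(-i\sigma\cdot\grad + m\sigma_3)u_+\|^2_{\Omega_+}
   + \|(-i\sigma\cdot\grad + m\sigma_3)u_-\|^2_{\Omega_-},
\end{align*}
and expand each square as $\|\sigma\cdot\grad u_\pm\|^2_{\Omega_\pm} + m^2\|u_\pm\|^2_{\Omega_\pm} + 2m\RE\langle -i\sigma\cdot\grad u_\pm, \sigma_3 u_\pm\rangle_{\Omega_\pm}$, using that $\sigma_3$ is self-adjoint and $(\sigma\cdot\grad)$ is (formally) skew-adjoint so the cross term is real. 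The term $\|\sigma\cdot\grad u_\pm\|^2_{\Omega_\pm}$ is handled by Lemma~\ref{l-extension} with $f=g=u_\pm$: it equals $\|\grad u_\pm\|^2_{\Omega_\pm}$ minus the tangential boundary pairing. Summing over $\pm$ recovers $\|\grad u\|^2_{\RR^2\setminus\Gamma} + m^2\|u\|^2_{\RR^2}$ plus two types of boundary terms, and the task reduces to showing that all boundary terms collapse to $\frac{2m}{\tau}\|u_+|_\Gamma - u_-|_\Gamma\|^2_\Gamma$.

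Next I would treat the two boundary contributions. For the cross term, a componentwise integration by parts (Gauss' theorem) gives, for each region, $2m\RE\langle -i\sigma\cdot\grad u_\pm, \sigma_3 u_\pm\rangle_{\Omega_\pm} = \mp m\langle (\sigma\cdot\nu) u_\pm|_\Gamma, \sigma_3 u_\pm|_\Gamma\rangle_\Gamma$ up to the sign coming from the orientation of $\nu$ relative to $\Omega_\pm$ (one needs here that the "bulk" parts cancel by skew-adjointness, leaving only the boundary pairing, which is purely imaginary before taking the real part multiplied by $-i$—I should be careful that the net real contribution is exactly the stated boundary integral). So the cross terms together produce a term of the form $m\langle(\sigma\cdot\nu)(u_-|_\Gamma), \sigma_3(u_-|_\Gamma)\rangle_\Gamma - m\langle(\sigma\cdot\nu)(u_+|_\Gamma),\sigma_3(u_+|_\Gamma)\rangle_\Gamma$. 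For the tangential terms, the pairings from $\Omega_+$ and $\Omega_-$ carry opposite signs (the normal, hence the arc-length orientation, is reversed), so they combine into $-\langle u_+|_\Gamma - u_-|_\Gamma, i\sigma_3\partial_t(u_+|_\Gamma)\rangle + (\text{symmetrized})$; here I would use that $u_-|_\Gamma = Mu_+|_\Gamma$ with $M$ from~\eqref{def_M}, so $u_+|_\Gamma - u_-|_\Gamma = (\sigma_0 - M)u_+|_\Gamma$, an explicit multiple of $i\sigma_3(\sigma\cdot\nu)u_+|_\Gamma$. The key algebraic fact is that $\sigma_0 - M = -\frac{4\tau}{4-\tau^2}i\sigma_3(\sigma\cdot\nu)$, so the jump is $\frac{4\tau}{\tau^2-4}i\sigma_3(\sigma\cdot\nu)u_+|_\Gamma$; this is what converts tangential-derivative pairings and the $(\sigma\cdot\nu)$-pairings into a clean multiple of $\|u_+|_\Gamma - u_-|_\Gamma\|^2_\Gamma$.

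The decisive computation is the Pauli-matrix identity: writing everything in terms of $w := u_+|_\Gamma$ and using $(\sigma\cdot\nu)^2 = \sigma_0$, $\sigma_3(\sigma\cdot\nu) = -(\sigma\cdot\nu)\sigma_3$, and $M = \frac{4+\tau^2}{4-\tau^2}\sigma_0 + \frac{4\tau}{4-\tau^2}i\sigma_3(\sigma\cdot\nu)$, one checks that
\[
  m\big\langle(\sigma\cdot\nu)Mw, \sigma_3 Mw\big\rangle_\Gamma - m\big\langle(\sigma\cdot\nu)w,\sigma_3 w\big\rangle_\Gamma
  = \frac{2m}{\tau}\big\|(\sigma_0-M)w\big\|^2_\Gamma
\]
(and, separately, the net tangential-derivative pairings cancel, since $\partial_t$ is skew-symmetric on $H^1(\Gamma)$ and the $M$-twisted version differs from the untwisted one by lower-order bounded terms that pair to zero by the anti-commutation relations). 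I expect the main obstacle to be exactly this bookkeeping: getting every sign right in Gauss' theorem on the unbounded Lipschitz regions $\Omega_\pm$ (for which one approximates by $H^2$ functions and truncates at infinity, as in Lemma~\ref{l-extension}), and verifying that the tangential-derivative contributions from the two sides genuinely cancel rather than combine into an unwanted surface term. Once the identity $\|(\sigma\cdot\nu)Mw\|$-type expansion is carried out and the constant $\frac{2m}{\tau}$ emerges, the proof concludes by density of $H^2$ in $H^1$ and continuity of all the traces and pairings involved, exactly as at the end of Lemma~\ref{l-extension}.
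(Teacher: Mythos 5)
Your overall strategy is the same as the paper's: apply Lemma~\ref{l-extension} on $\Omega_+$ and $\Omega_-$ separately, show that the two tangential-derivative pairings cancel, and extract the jump term $\frac{2m}{\tau}\|u_+|_{\Gamma}-u_-|_{\Gamma}\|^2_{\Gamma}$ from the mass cross-term via the transmission condition. Two of your intermediate formulas are off, though neither is fatal. First, your ``decisive'' displayed identity cannot hold as written: since $\langle(\sigma\cdot\nu)v,\sigma_3 v\rangle_\Gamma=\langle\sigma_3(\sigma\cdot\nu)v,v\rangle_\Gamma$ and $\sigma_3(\sigma\cdot\nu)=i\,X$ with $X:=-i\sigma_3(\sigma\cdot\nu)$ Hermitian, its left-hand side is purely imaginary while the right-hand side is real; you dropped the factor $-i$ carried by the operator. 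The corrected statement, namely $m\langle Xw,w\rangle_\Gamma-m\langle XMw,Mw\rangle_\Gamma=\tfrac{2m}{\tau}\|(\sigma_0-M)w\|^2_\Gamma$ with $w=u_+|_\Gamma$, is true (one checks it from $M=a\sigma_0-bX$, $a=\tfrac{4+\tau^2}{4-\tau^2}$, $b=\tfrac{4\tau}{4-\tau^2}$, $X^2=\sigma_0$) and is exactly what the paper obtains, there via the reformulated transmission condition~\eqref{e-transmission} rather than by direct matrix algebra. Second, $\sigma_0-M$ is \emph{not} a multiple of $i\sigma_3(\sigma\cdot\nu)$: it equals $-\tfrac{2\tau^2}{4-\tau^2}\sigma_0-\tfrac{4\tau}{4-\tau^2}i\sigma_3(\sigma\cdot\nu)$, so the jump has a nonzero component along $u_+|_\Gamma$ itself; fortunately your final identity does not depend on this mis-simplification.

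The genuine gap is the justification of the cancellation of the tangential terms. The mechanism is not skew-symmetry of $\partial_t$ plus ``lower-order bounded terms that pair to zero.'' Writing $u_-|_\Gamma=Mu_+|_\Gamma$, the two pairings differ by $\langle Mu_+|_\Gamma,i\sigma_3\partial_t(Mu_+|_\Gamma)\rangle-\langle u_+|_\Gamma,i\sigma_3\partial_t(u_+|_\Gamma)\rangle$, and the cancellation rests on the matrix identity $\sigma_3M=M^{-1}\sigma_3$ (hence $M^*\,i\sigma_3\,M=i\sigma_3$) \emph{together with} the commutation $\partial_t(Mu_+|_\Gamma)=M\,\partial_t(u_+|_\Gamma)$ in $H^{-1/2}(\Gamma;\CC^2)$. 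The latter is not free: $M$ is only piecewise constant and jumps at the corner, while $u_+|_\Gamma$ is only $H^{1/2}$, so a naive product rule would produce a Dirac mass at the vertex weighted by a corner value of the trace that need not even be defined. The paper devotes Appendix~A to exactly this point: it shows that $u_+|_\Gamma\circ\gamma$ lies in $H^{1/2}_{00}(\RR\setminus\{0\};\CC^2)=\big[H^1_0(\RR\setminus\{0\};\CC^2),L^2(\RR;\CC^2)\big]_{1/2}$, approximates by $H^1_0$ functions vanishing at the corner, and passes to the limit in the duality pairing. Your proposal needs this (or an equivalent) argument; without it the tangential contributions could a priori leave behind an extra point term at the vertex instead of cancelling.
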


\begin{proof}
Let $u \in \dom S_\tau$ be fixed. First, as $\nu$ is pointing outside of $\Omega_+$ and inside of $\Omega_-$,  Lemma \ref{l-extension} yields
\begin{equation}\label{e-startnorm}
\begin{aligned}
    \|S_\tau u\|^2_{\RR^2}&=\|(-i\sigma\cdot\grad)u+m\sigma_3 u\|^2_{\RR^2\setminus\Gamma} \\
    &=\|(\sigma\cdot\grad)u\|^2_{\RR^2\setminus\Gamma}+m^2\|u\|^2_{\RR^2}+2\Re\left(\langle-i(\sigma\cdot\grad)u,m\sigma_3 u\rangle_{\RR^2\setminus\Gamma}\right) \\
    &=\|\grad u\|^2_{\RR^2\setminus\Gamma}+m^2\|u\|^2_{\RR^2}+2\Re\left(\langle-i(\sigma\cdot\grad)u,m\sigma_3 u\rangle_{\RR^2\setminus\Gamma}\right) \\
    &\qquad+\langle u_-|_{\Gamma},i\sigma_3 \partial_t (u_-|_{\Gamma})\rangle_{H^{1/2}(\Gamma; \mathbb{C}^2)\times H^{-1/2}(\Gamma; \mathbb{C}^2)}\\
    &\qquad -\langle u_+|_{\Gamma},i\sigma_3 \partial_t (u_+|_{\Gamma})\rangle_{H^{1/2}(\Gamma; \mathbb{C}^2)\times H^{-1/2}(\Gamma; \mathbb{C}^2)}.
\end{aligned}
\end{equation}
Recall that $M$ is defined by~\eqref{def_M} and note that $\sigma_3(\sigma\cdot\nu)=-(\sigma\cdot\nu)\sigma_3$, which holds by~\eqref{Pauli_anti_commutation}, implies
\begin{equation*}
  \sigma_3 M = \left( \frac{4+\tau^2}{4-\tau^2}\sigma_0-\frac{4\tau}{4-\tau^2}i\sigma_3(\sigma\cdot\nu) \right)  \sigma_3 = M^{-1} \sigma_3.
\end{equation*}
Since $\nu$ is piecewise constant, this yields
\begin{equation*}
\begin{aligned}
    \big\langle M u_+|_\Gamma,&i\sigma_3(\partial_t (Mu_+|_{\Gamma})\big\rangle_{H^{1/2}(\Gamma; \mathbb{C}^2)\times H^{-1/2}(\Gamma; \mathbb{C}^2)}\\
    &= \big\langle M^{-1} M u_+|_\Gamma, i\sigma_3 \partial_t (u_+|_{\Gamma})\big\rangle_{H^{1/2}(\Gamma; \mathbb{C}^2)\times H^{-1/2}(\Gamma; \mathbb{C}^2)},
\end{aligned}
\end{equation*}
cf.~\eqref{equation_commutation} for details. This yields
\begin{equation}\label{e-boundary0s}
\begin{aligned}
    \big\langle u_-|_\Gamma,&i\sigma_3\partial_t (u_-|_{\Gamma})\big\rangle_{H^{1/2}(\Gamma; \mathbb{C}^2)\times H^{-1/2}(\Gamma; \mathbb{C}^2)}
    - \big\langle u_+|_\Gamma,i\sigma_3\partial_t (u_+|_{\Gamma})\big\rangle_{H^{1/2}(\Gamma; \mathbb{C}^2)\times H^{-1/2}(\Gamma; \mathbb{C}^2)}  \\
    &= \big\langle M u_+|_\Gamma,i\sigma_3\partial_t (Mu_+|_{\Gamma})\big\rangle_{H^{1/2}(\Gamma; \mathbb{C}^2)\times H^{-1/2}(\Gamma; \mathbb{C}^2)} \\
    &\hspace{6em} -\big\langle u_+|_\Gamma,i\sigma_3\partial_t (u_+|_{\Gamma})\big\rangle_{H^{1/2}(\Gamma; \mathbb{C}^2)\times H^{-1/2}(\Gamma; \mathbb{C}^2)} \\
    &=\big\langle M^{-1} M u_+|_\Gamma,  i\sigma_3\partial_t (u_+|_{\Gamma})\big\rangle_{H^{1/2}(\Gamma; \mathbb{C}^2)\times H^{-1/2}(\Gamma; \mathbb{C}^2)} \\
    &\hspace{6em} -\big\langle u_+|_\Gamma,i\sigma_3\partial_t (u_+|_{\Gamma})\big\rangle_{H^{1/2}(\Gamma; \mathbb{C}^2)\times H^{-1/2}(\Gamma; \mathbb{C}^2)} =0. \\
\end{aligned}
\end{equation}
This simplifies equation \eqref{e-startnorm} and it remains only to recover the last term in equation \eqref{e-quadraticform}. 

The 
 identity
\begin{align*}
    \big\langle-i(\sigma\cdot\grad)u_{\pm},m\sigma_3 u_{\pm}\big\rangle\ci{\Omega_{\pm}}+\big\langle m\sigma_3 u_{\pm},-i(\sigma\cdot\grad)u_{\pm}\big\rangle\ci{\Omega_{\pm}}=\pm\big\langle-i(\sigma\cdot\nu)u_{\pm}|_\Gamma,m\sigma_3 u_{\pm}|_\Gamma\big\rangle_{\Gamma},
\end{align*}
and the fact that $M$ commutes with $-i\sigma_3(\sigma\cdot\nu)$ allow for the following simplification: 

\begin{align*}
2\Re&\left(\langle-i(\sigma\cdot\grad)u,m\sigma_3 u\rangle_{\RR^2\setminus\Gamma}\right) \\
&=\big\langle-i(\sigma\cdot\grad)u,m\sigma_3 u\big\rangle_{\RR^2\setminus\Gamma}
 +\big\langle m\sigma_3 u,-i(\sigma\cdot\grad)u\big\rangle_{\RR^2\setminus\Gamma} \\
 &=m\big[\big\langle-i\sigma_3(\sigma\cdot\nu)u_+|_\Gamma,u_+|_\Gamma\big\rangle_{\Gamma}
 -\big\langle-i\sigma_3(\sigma\cdot\nu)u_-|_\Gamma,u_-|_\Gamma\big\rangle_{\Gamma} \\
 &\hspace{8em}-\big\langle u_+|_\Gamma,-i\sigma_3(\sigma\cdot\nu)M u_+|_\Gamma\big\rangle_{\Gamma}
 +\big\langle u_+|_\Gamma,-i\sigma_3(\sigma\cdot\nu) M u_+|_\Gamma\big\rangle_{\Gamma}\Big] \\
 &=m\big[\big\langle-i\sigma_3(\sigma\cdot\nu)u_+|_\Gamma,u_+|_\Gamma\big\rangle_{\Gamma}
 -\big\langle-i\sigma_3(\sigma\cdot\nu)u_-|_\Gamma,u_-|_\Gamma\big\rangle_{\Gamma} \\
 &\hspace{8em}-\big\langle u_-|_\Gamma,-i\sigma_3(\sigma\cdot\nu)u_+|_\Gamma\big\rangle_{\Gamma}
 +\big\langle u_+|_\Gamma,-i\sigma_3(\sigma\cdot\nu)u_-|_\Gamma\big\rangle_{\Gamma}\Big] \\
 &=m\big[\big\langle-i\sigma_3(\sigma\cdot\nu)(u_+|_\Gamma-u_-|_\Gamma),(u_+|_\Gamma+u_-|_\Gamma)\big\rangle_{\Gamma}\big] \\
 &=m\left\langle-i\sigma_3(\sigma\cdot\nu)(u_+|_\Gamma-u_-|_\Gamma),-\frac{2}{\tau}i\sigma_3(\sigma\cdot\nu)(u_+|_\Gamma-u_-|_\Gamma)\right\rangle_{\Gamma}=\frac{2m}{\tau}\big\|u_+|_\Gamma-u_-|_\Gamma\big\|^2_{\Gamma},
\end{align*}
where the second to last equality is due to the transmission condition in equation \eqref{e-transmission}. The claim follows. 
\end{proof}

\section{Symmetry of  the spectrum} \label{section_symmetry_spectrum}

Recall that the distinguished self-adjoint extension of $S_\tau$ is denoted by $A_{\tau,1}$, cf. Theorems~\ref{t-nbreakdown} and~\ref{t-distinguished}. In this section we show that the spectrum of $A_{\tau,1}$ is symmetric with respect to the origin. Corollary~\ref{corollary_sigma_ess} then shows that the essential spectrum of any self-adjoint extension of $S_\tau$ is symmetric, which will be helpful in determining further spectral properties in Sections~\ref{section_sigma_ess} and~\ref{section_discrete}.

\begin{theo}\label{t-symmetry}
Let  $\omega \in (0, \frac{\pi}{2})$ and $\tau \in \mathbb{R} \setminus \{ -2, 0, 2 \}$. Then the spectrum of $A_{\tau,1}$ is symmetric about the origin, i.e.~$\la\in\sigma(A_{\tau,1})$ if and only if $-\la\in\sigma(A_{\tau,1})$. In particular, this symmetry also holds for the discrete and the essential spectrum of $A_{\tau,1}$.
\end{theo}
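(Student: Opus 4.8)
The plan is to exhibit an explicit anti-unitary (or unitary) involution $W$ on $L^2(\RR^2;\CC^2)$ that anti-commutes with $A_{\tau,1}$, i.e.\ $W A_{\tau,1} = -A_{\tau,1} W$ with $W(\dom A_{\tau,1}) = \dom A_{\tau,1}$; then $\la \in \sigma(A_{\tau,1})$ forces $-\la \in \sigma(W A_{\tau,1} W^{-1}) = \sigma(-A_{\tau,1})$, and the symmetry of spectrum, discrete spectrum, and essential spectrum follows at once since conjugation by a (anti-)unitary preserves these classes. The natural candidate is a chiral-type symmetry built from a Pauli matrix combined with a geometric reflection. Concretely, since $m\sigma_3$ appears with a $+$ sign and $-i\sigma\cdot\grad$ is odd under momentum reversal, one looks for a matrix $P$ with $P\sigma_3 = -\sigma_3 P$ and $P\sigma_j = \sigma_j P$ for $j=1,2$ (so $P = \sigma_1$ or $\sigma_2$ up to scalars) composed with the complex conjugation $C u(x) := \overline{u(x)}$ and possibly a reflection $R$ across the symmetry axis $\{x_2 = 0\}$, which exchanges $\Omega_+$ with itself and $\Gamma_l$ with $\Gamma_r$.

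First I would compute how each building block of $A_{\tau,z}$ transforms. The differential expression $-i\sigma\cdot\grad + m\sigma_3$ acting in each half-domain: under $u \mapsto \sigma_2 \overline{u}$ one has $\sigma_2 \overline{(-i\sigma_1\partial_1 - i\sigma_2\partial_2 + m\sigma_3)u} = \sigma_2(i\sigma_1\partial_1 - i\sigma_2\partial_2\overline{u}\text{-terms}\ldots)$ — the point is to check that a single choice among $\{\sigma_1 C, \sigma_2 C, \sigma_1 C R, \sigma_2 C R\}$ sends the free expression to its negative. (The reflection $R$ flips $\partial_2 \mapsto -\partial_2$ and $\sigma_2$-conjugation handles the remaining sign; the mass term flips sign precisely when the matrix factor anti-commutes with $\sigma_3$.) Then I would verify the transmission condition $u_-|_\Gamma = M u_+|_\Gamma$ is preserved: because $M$ in \eqref{def_M} has the form $a\sigma_0 + b\, i\sigma_3(\sigma\cdot\nu)$ with real $a,b$, and $W$ is (anti-)linear and compatible with the reflection that maps $\nu$ on $\Gamma_l$ to $\nu$ on $\Gamma_r$, the matrix $M$ is left invariant (this is essentially the same commutation $\sigma_3 M = M^{-1}\sigma_3$ exploited in the proof of Theorem~\ref{t-quadraticform}, together with $\overline{M} = M$). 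Finally I would check that the deficiency element combination $v_+ + z v_-$ with $z=1$ is mapped into the domain: from the explicit formula \eqref{def_v_pm} in terms of real-valued Bessel functions $K_{\la\pm 1/2}$ and the eigenfunction $\varphi_\la$ of $J_\tau$, one shows $W v_+ = \pm v_\mp$ (up to a unimodular constant), so $W(v_+ + v_-) = \pm(v_+ + v_-)$, which lands back in $\dom A_{\tau,1}$; this is exactly the point where $z=1$ is needed and where a generic $z$ on the unit circle would fail, explaining why the statement is only for the distinguished extension.

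The main obstacle I anticipate is the bookkeeping around the angular eigenfunction $\varphi_\la$ and the deficiency elements: one must confirm that the chosen $W$ maps $v_+$ to (a unimodular multiple of) $v_-$ rather than to something outside $\spa\{v_+, v_-\}$, which requires knowing how $\varphi_\la$ transforms under the reflection $\te \mapsto -\te$ of the angular variable. Concretely, $\varphi_\la$ is an eigenfunction of the spin-orbit operator $J_\tau$ in \eqref{e-angular}, and one needs that $J_\tau$ intertwines with the induced angular symmetry so that $\varphi_\la$ goes to a multiple of itself; Lemma~\ref{l-eigenvalues} guarantees the eigenvalue $\la \in (0,\tfrac12)$ is simple, so \emph{any} symmetry of $J_\tau$ must fix $\spa\{\varphi_\la\}$, which closes the argument without needing an explicit formula for $\varphi_\la$. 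Once the transformation of $v_\pm$ is pinned down, the rest is routine: the action formula $A_{\tau,1}(u_0 + c v_+ + c v_-) = S_\tau u_0 + (i\sigma_0 + m\sigma_3)c v_+ + (-i\sigma_0 + m\sigma_3)c v_-$ transforms term-by-term to $-A_{\tau,1}(W u_0 + \ldots)$, using $W S_\tau = -S_\tau W$ on $\dom S_\tau$ (already handled), $W(i\sigma_0 + m\sigma_3) v_+ = \overline{i}\,(W\sigma_0 v_+) + m (W\sigma_3 v_+)$ combined with $W\sigma_3 = -\sigma_3 W$ and the anti-linearity sending $i \mapsto -i$ — and the two cross-effects conspire so that $(i\sigma_0 + m\sigma_3)v_+$ maps to $-(-i\sigma_0 + m\sigma_3)v_-$, as required. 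Then $\sigma(A_{\tau,1}) = -\sigma(A_{\tau,1})$, and the same for $\sigma_\textup{disc}$ and $\sigma_\textup{ess}$ because $W$ is a (possibly anti-linear) isometric bijection of the Hilbert space.
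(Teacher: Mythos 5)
Your proposal is correct and follows essentially the same route as the paper: there one takes $W=\sigma_1 C$ (charge conjugation with $Cu=\overline{u}$, no reflection $R$ needed), checks the intertwining $\sigma_1\overline{M}=M\sigma_1$ so the transmission condition is preserved, uses exactly your observation that the simplicity of the eigenvalue $\la$ of $J_\tau$ (Lemma~\ref{l-eigenvalues}) forces $C\f_\la\in\spa\{\f_\la\}$ so that the deficiency element of $\dom A_{\tau,1}$ is preserved, and then verifies the anti-commutation $A_{\tau,1}W=-WA_{\tau,1}$. The only small imprecision is your claim that $\overline{M}=M$ --- this is false since $\overline{i\sigma_3(\sigma\cdot\nu)}\neq i\sigma_3(\sigma\cdot\nu)$; what actually holds and suffices is the relation $\sigma_1\overline{M}=M\sigma_1$, which follows from the anti-commutation relations~\eqref{Pauli_anti_commutation}.
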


\begin{proof}
First, recall that the spin-orbit operator $J_\tau$ is given in equation \eqref{e-angular}. Let $\la$ be the unique simple eigenvalue of $J_\tau$ in the interval $(0,\frac{1}{2})$ from Lemma \ref{l-eigenvalues} and $\f_\la$ the corresponding eigenvector. Define
\begin{align*}
    v(x(r,\te), y(r,\te)):=K_{\la-1/2}(r)\cdot\f_\la(\te),
\end{align*}
where $x(r,\te) = r\cos\te$ and $y(r,\te) = r\sin\te$.
Then, by Theorem~\ref{t-distinguished} the distinguished self-adjoint extension of $S_\tau$ can be written as
\begin{align*}
    A_{\tau,1} u&=(-i(\sigma\cdot\grad)+m\sigma_3)u_+\oplus(-i(\sigma\cdot\grad)+m\sigma_3) u_-, \\
    \dom A_{\tau,1} &=\dom S_\tau+\spa\{v\}.
\end{align*}

Define the charge conjugation operator by $Cf:=\sigma_1\overline{f}$. We show that $C$ maps $\dom A_{\tau,1}$ into itself. Note that $\overline{\sigma_1} = \sigma_1$, $\overline{\sigma_2} = -\sigma_2$, and $\overline{\sigma_3} = \sigma_3$. First, for $u \in \dom S_\tau$, the transmission condition in equation \eqref{def_M} and the anti-commutation relations in~\eqref{Pauli_anti_commutation} yield
\begin{align*}
    0&=C\left[\left( \frac{4+\tau^2}{4-\tau^2} \sigma_0 + \frac{4\tau}{4-\tau^2}i \sigma_3 (\sigma \cdot \nu) \right) u_+|_\Gamma - u_-|_\Gamma\right]\\
    &=\sigma_1\left[\left( \frac{4+\tau^2}{4-\tau^2} \sigma_0 -\frac{4\tau}{4-\tau^2} i \sigma_3 (\sigma_1 \nu_1 - \sigma_2 \nu_2) \right) \overline{u_+|_\Gamma} - \overline{u_-|_\Gamma}\right] \\
    &=\left[\left( \frac{4+\tau^2}{4-\tau^2} \sigma_0 +\frac{4\tau}{4-\tau^2} i \sigma_3 (\sigma_1 \nu_1 + \sigma_2 \nu_2) \right) \sigma_1 \overline{u_+|_\Gamma} - \sigma_1 \overline{u_-|_\Gamma}\right] \\
    &=\left[\left( \frac{4+\tau^2}{4-\tau^2} \sigma_0 + \frac{4\tau}{4-\tau^2}i \sigma_3 (\sigma \cdot \nu) \right) C u_+|_\Gamma - C u_-|_\Gamma\right],
\end{align*}
and so $Cu\in\dom S_\tau$. 
Next, as $K_{\la-1/2}$ is real-valued and scalar, we have
\begin{align*}
    Cv(x(r,\te), y(r,\te))=K_{\la-1/2}(r)C\f_\la(\te).
\end{align*}
To show that $C\f_\la\in\ker(J_\tau-\la)$, we first verify that $C\f_{\la} \in \dom J_\tau$. As $M_l$ is a restriction of $M$, a similar consideration as above shows first $\sigma_1 \overline{M_l} = M_l \sigma_1$ and further
\begin{align*}
    C\f_{\la,-}(\w)&=\sigma_1\overline{\f_{\la,-}}(\w)=\sigma_1\overline{M_l \f_{\la,+}(\w)} = M_l\sigma_1\overline{ \f_{\la,+}(\w)} =M_l C\f_{\la,+}(\w).
\end{align*}
An analogous argument shows that $C\f_{\la,-}(2\pi-\w)=M_r C\f_{\la,+}(-\w)$, and hence $C\f_\la\in\dom J_\tau$. Moreover, 
\begin{equation*}
  \begin{split}
    J_\tau C\f_\la&=\left(-i\sigma_3\sigma_1\overline{\f_{\la,+}}'+\frac{\sigma_1\overline{\f_{\la,+}}}{2}\right)\oplus\left(-i\sigma_3\sigma_1\overline{\f_{\la,-}}'+\frac{\sigma_1\overline{\f_{\la,-}}}{2}\right) \\
    &=\sigma_1\left[\left(i\sigma_3\overline{\f_{\la,+}'}+\frac{\overline{\f_{\la,+}}}{2}\right)\oplus\left(i\sigma_3\overline{\f_{\la,-}'}+\frac{\overline{\f_{\la,-}}}{2}\right)\right] \\
    &=\sigma_1\overline{\left[\left(-i\sigma_3\f_{\la,+}'+\frac{\f_{\la,+}}{2}\right)\oplus\left(-i\sigma_3\f_{\la,-}'+\frac{\f_{\la,-}}{2}\right)\right]} \\
    &=\sigma_1\overline{\la\f_\la}=\la C\f_\la.
  \end{split}
\end{equation*}
Since $\la$ is a simple eigenvalue of $J_\tau$ and $\f_\la, C\f_\la\in\ker(J_\tau-\la)$, we find that $C\f_\la=a\f_\la$ for some $a\in\CC$. Hence, $Cv\in\spa\{v\}$ and for any $u\in\dom A_{\tau,1}$, $Cu\in\dom A_{\tau,1}$.
Finally, using $\sigma_2=-\overline{\sigma_2}$, we calculate for $u\in\dom A_{\tau,1}$ 
\begin{align*}
    (A_{\tau,1}  Cu)_\pm&=[(-i(\sigma\cdot\grad)+m\sigma_3]Cu_\pm=[-i(\sigma\cdot\grad)+m\sigma_3]\sigma_1\overline{u}_\pm \\
    &=\sigma_1(-i\sigma_1\partial_1+i\sigma_2\partial_2-m\sigma_3)\overline{u}_\pm=-\sigma_1\overline{(-i\sigma_1\partial_1-i\sigma_2\partial_2+m\sigma_3)u_\pm} \\
    &=-C(-i(\sigma\cdot\grad)+m\sigma_3)u_\pm=-C(A_{\tau,1}  u)_\pm.
\end{align*}
If we let $\la\in\sigma(A_{\tau,1})$ and $(u_n)_{n=1}^{\infty}\subset\dom A_{\tau,1}$ such that $\|u_n\|_{\mathbb{R}^2}=1$ and $(A_{\tau,1} -\la)u_n\to 0$, then $Cu_n\in\dom A_{\tau,1}$, $\|Cu_n\|_{\mathbb{R}^2}=1$ and 
\begin{align*}
    (A_{\tau,1} +\la)Cu_n=C(-A_{\tau,1} +\la)u_n\to0,
\end{align*}
so that $-\la\in\sigma(A_{\tau,1} )$. The result follows. 
\end{proof}

\begin{cor} \label{corollary_sigma_ess}
Let $\omega \in (0, \frac{\pi}{2})$ and $\tau \in \mathbb{R} \setminus \{ -2, 0, 2 \}$. Let $z \in \mathbb{C}$ with $|z|=1$ and $A_{\tau,z}$ be the corresponding self-adjoint extension of $S_\tau$ via Theorem~\ref{t-nbreakdown}. Then the essential spectrum of $A_{\tau,z}$ is symmetric with respect to the origin, i.e.~$\sigma\ti{ess}(A_{\tau,z})=-\sigma\ti{ess}(A_{\tau,z})$.
\end{cor}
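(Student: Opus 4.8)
The plan is to deduce this from Theorem~\ref{t-symmetry} by a perturbation argument that exploits the fact that $S_\tau$ has deficiency indices $(1,1)$.

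First I would note that, by Theorem~\ref{t-nbreakdown}, one has $\dom A_{\tau,z} = \dom S_\tau + \spa\{v_+ + z v_-\}$ (a direct sum), so that $A_{\tau,z}$ and the distinguished extension $A_{\tau,1}$ are both restrictions of $S_\tau^*$ whose domains contain $\dom S_\tau$ as a subspace of codimension one. Since $S_\tau$ has deficiency indices $(1,1)$, Krein's resolvent formula yields that for any $\mu$ in the common resolvent set the difference $(A_{\tau,z} - \mu)^{-1} - (A_{\tau,1} - \mu)^{-1}$ has rank at most one, and in particular is compact.

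Next I would invoke the stability of the essential spectrum under a compact resolvent difference (Weyl's theorem), obtaining $\sigma_\textup{ess}(A_{\tau,z}) = \sigma_\textup{ess}(A_{\tau,1})$ for every $z$ with $|z| = 1$. Since Theorem~\ref{t-symmetry} gives $\sigma_\textup{ess}(A_{\tau,1}) = -\sigma_\textup{ess}(A_{\tau,1})$, we conclude $\sigma_\textup{ess}(A_{\tau,z}) = \sigma_\textup{ess}(A_{\tau,1}) = -\sigma_\textup{ess}(A_{\tau,1}) = -\sigma_\textup{ess}(A_{\tau,z})$, which is the claim.

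There is no genuine obstacle in this argument; the only point worth formulating precisely is the (standard) fact that two self-adjoint extensions of a symmetric operator with deficiency indices $(1,1)$ have a resolvent difference of rank at most one, for which I would cite the extension theory of symmetric operators. Alternatively, one could avoid Krein's formula entirely and read off the rank-at-most-one resolvent difference directly from the explicit description of $\dom A_{\tau,z}$ in Theorem~\ref{t-nbreakdown}; one might also remark that the charge conjugation $C$ intertwines $A_{\tau,z}$ and $A_{\tau,\bar z}$ with a sign flip (as in the proof of Theorem~\ref{t-symmetry}), but this observation still requires the perturbation step above to identify the essential spectra of $A_{\tau,z}$ and $A_{\tau,\bar z}$.
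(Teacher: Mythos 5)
Your argument is correct and is essentially the paper's own proof: the paper likewise observes that $(A_{\tau,z}-i)^{-1}-(A_{\tau,1}-i)^{-1}$ has finite rank (a consequence of the deficiency indices being $(1,1)$), so that $\sigma\ti{ess}(A_{\tau,z})=\sigma\ti{ess}(A_{\tau,1})$, and then invokes Theorem~\ref{t-symmetry}. Your additional remarks on Krein's formula and the charge conjugation are fine but not needed beyond this.
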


\begin{proof}
The claim holds for the distinguished self-adjoint extension $A_{\tau,1}$ by Theorem \ref{t-symmetry} and since 
\begin{align*}
    (A_{\tau,z}-i)^{-1}-(A_{\tau,1}-i)^{-1}
\end{align*}
has finite rank, it follows that $\sigma\ti{ess}(A_{\tau,z})=\sigma\ti{ess}(A_{\tau,1})$.
\end{proof}

\section{Essential Spectrum} \label{section_sigma_ess}

In this section we show the statements about the essential spectrum of any self-adjoint extension of $S_\tau$ in Theorem~\ref{theorem_intro}. We begin with a general result for symmetric operators. Then, in Proposition~\ref{proposition_essential_spectrum1} we show that $(-\infty, -m] \cup [m,\infty)\subset\sigma\ti{ess}(A_{\tau,z})$ holds for any self-adjoint extension $A_{\tau,z}$ of $S_\tau$. Together with Theorem~\ref{t-quadraticform} and Proposition~\ref{p-vladnote} this implies already that this inclusion is an equality if $\tau \in (0, \infty) \setminus \{ 2 \}$. Then, in Propositions~\ref{p-essspec1} and~\ref{p-essspec2} we show with a Neumann bracketing argument and suitable singular sequences that for $\tau \in (-\infty, 0) \setminus \{-2\}$ one has
\begin{equation*}
  \sigma\ti{ess}(S_\tau^*S_\tau) = \Bigg[m^2\left(\frac{\tau^2-4}{\tau^2+4}\right)^2,\infty\Bigg).
\end{equation*}
This allows us finally with Proposition~\ref{p-vladnote} and Corollary~\ref{corollary_sigma_ess} to determine $\sigma_\textup{ess}(A_{\tau,z})$ also for $\tau \in (-\infty, 0) \setminus \{-2\}$.

\begin{prop}\label{p-vladnote}
Let $S$ be a closed, densely defined, symmetric operator in the Hilbert space $\cH$ with deficiency indices $(n,n)$, $n\in\NN$, and $D$ be a self-adjoint extension of $S$.
Then the resolvent difference
\[
	W:=(S^*S+1)^{-1} - (D^2+1)^{-1}
\]
is a finite-rank operator with rank at most $n$.
\end{prop}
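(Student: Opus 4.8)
The plan is to reduce the assertion to a statement about the quadratic forms of $S^*S$ and $D^2$. Both operators are non-negative and self-adjoint — for $S^*S$ this is von Neumann's theorem — so each is generated by a closed non-negative form. I would invoke the polar decomposition $S=V(S^*S)^{1/2}$ of the closed operator $S$, by which $\dom\big((S^*S)^{1/2}\big)=\dom S$ and $\|(S^*S)^{1/2}u\|_\cH=\|Su\|_\cH$; hence the form of $S^*S$ is $\mathfrak s[u]:=\|Su\|_\cH^2$ with form domain exactly $\dom S$. Likewise, by the spectral theorem the form of $D^2$ is $\mathfrak d[u]:=\|Du\|_\cH^2$ with form domain $\dom D$. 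Since $D$ extends $S$, we have $\dom S\subseteq\dom D$ and $\mathfrak s[u]=\mathfrak d[u]$ for $u\in\dom S$, so $\mathfrak s$ is precisely the restriction of $\mathfrak d$ to the subspace $\dom S$ of its form domain. Set $\mathfrak s_1:=\mathfrak s+\langle\cdot,\cdot\rangle_\cH$ and $\mathfrak d_1:=\mathfrak d+\langle\cdot,\cdot\rangle_\cH$.

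The second ingredient is the count $\dim(\dom D/\dom S)=n$, which is von Neumann's first formula $\dom D=\dom S\dotplus(I+U)\ker(S^*-i)$ with $U$ unitary and $\dim\ker(S^*-i)=n$. Moreover $(\dom D,\mathfrak d_1)$ is a Hilbert space in which $\dom S$ is a closed subspace, since $\mathfrak s$ is closed and $\mathfrak s_1=\mathfrak d_1|_{\dom S}$. Hence the $\mathfrak d_1$-orthogonal complement $\mathcal E$ of $\dom S$ inside $(\dom D,\mathfrak d_1)$ is an $n$-dimensional subspace of $\cH$.

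I would then prove $\ran W\subseteq\mathcal E$. Fix $f\in\cH$ and put $u:=(S^*S+1)^{-1}f\in\dom(S^*S)\subseteq\dom S$ and $v:=(D^2+1)^{-1}f\in\dom(D^2)\subseteq\dom D$. By the first representation theorem, $\mathfrak d_1[v,\phi]=\langle f,\phi\rangle_\cH$ for all $\phi\in\dom D$ and $\mathfrak s_1[u,\phi]=\langle f,\phi\rangle_\cH$ for all $\phi\in\dom S$. Restricting the first identity to $\phi\in\dom S$ and using $\mathfrak s_1[u,\phi]=\mathfrak d_1[u,\phi]$ there, subtraction gives $\mathfrak d_1[u-v,\phi]=0$ for all $\phi\in\dom S$. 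Since $u-v\in\dom D$ as well, this means exactly that $Wf=u-v\in\mathcal E$. Therefore $W$ maps $\cH$ into the $n$-dimensional space $\mathcal E$; being the difference of two bounded self-adjoint operators, it is a self-adjoint operator of rank at most $n$, in particular of finite rank.

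The manipulations with the representation theorems are routine. The parts that really need care are the identification of the form domain of $S^*S$ with $\dom S$ — this is exactly where closedness of $S$ is used, via $\dom\big((S^*S)^{1/2}\big)=\dom S$ in the polar decomposition — and the von Neumann count $\dim(\dom D/\dom S)=n$. I expect the former to be the main obstacle to state cleanly; note also that the sharp bound $n$ (as opposed to the $2n$ one would get by merely observing $\ran W\subseteq\ker((S^*)^2+1)$) is what forces the form-theoretic argument rather than a crude one.
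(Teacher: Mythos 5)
Your proof is correct, but it follows a genuinely different route from the paper's. The paper works directly with von Neumann's extension formulas: it decomposes $v=(D^2+1)^{-1}g$ as $v_0+\wt v+U\wt v$ with $v_0\in\dom S$ and $\wt v\in\ker(S^*-i)$, computes the sesquilinear form $(Wf,g)_{\cH}$ by cancelling the $(u,S^*Sv_0)_\cH$ terms against $(S^*Su,v_0)_\cH$ via the symmetry of $S^*S$, and lands on the explicit formula $W=-(1+U)\Pi(D^2+1)^{-1}$, whose range sits inside the $n$-dimensional space $(1+U)\ker(S^*-i)$. You instead pass to the quadratic forms: using the polar decomposition to identify the form of $S^*S$ as $\|S\cdot\|_\cH^2$ on $\dom S$ and the form of $D^2$ as $\|D\cdot\|_\cH^2$ on $\dom D$, you observe that the first is the restriction of the second to the closed subspace $\dom S$ of the form Hilbert space $(\dom D,\mathfrak d_1)$, and the first representation theorem then forces $Wf$ into the $\mathfrak d_1$-orthogonal complement $\mathcal E$ of $\dom S$, whose dimension is $\dim(\dom D/\dom S)=n$ by von Neumann's first formula. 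Both arguments hinge on the same codimension count, so neither is more general in substance; what your version buys is a cleaner structural statement (no explicit manipulation of $S^*$ on the decomposition of $\dom D^2$, and the self-adjointness of $W$ is not needed to read off the range), while the paper's version buys an explicit closed-form expression for $W$. All the steps you flag as delicate — $\dom((S^*S)^{1/2})=\dom S$ with $\|(S^*S)^{1/2}u\|_\cH=\|Su\|_\cH$, the injectivity of $I+U$ on $\ker(S^*-i)$ giving codimension exactly $n$, and the restriction of the representation identity for $D^2$ to test vectors in $\dom S$ — check out.
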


\begin{proof}
The self-adjoint extension $D$ can be described by the von Neumann extension theory of symmetric operators. According to~\cite[Theorem 13.10]{S12} there exists a unitary operator 
$U:\ker(S^*-i)\to\ker(S^*+i)$
such that the self-adjoint extension $D$ is defined as
\[
	D u = S u_0 + i \wt{u} -iU\wt{u},\qquad\dom D = \{u_0+\wt{u}+U\wt{u}: u_0\in\dom S,\,\wt{u}\in\ker(S^*-i)\}. 
\]
Likewise, \cite[Proposition 3.7]{S12} says that $u\in\dom S^*$ admits a unique decomposition $u = u_0 + \wt{u} +\wh{u}$ for $u_0\in\dom S$, $\wt{u}\in\ker(S^*-i)$ and $\wh{u}\in\ker(S^*+i)$. Define the linear operator $\Pi:\dom S^*\to\ker(S^*-i)$ such that $\Pi u := \wt{u}$ and let $f,g\in\cH$ be arbitrary. We set
\[
	u:= (S^*S+1)^{-1}f,\qquad v:=(D^2+1)^{-1}g,
\]
which, in particular, means that $u\in\dom(S^* S)$ and $v\in\dom(S^*)$. We calculate that 
\[
\begin{aligned}
	(W f,g)_{\cH} &=
	((S^*S+1)^{-1}f,g)_\cH - (f,(D^2+1)^{-1}g)_\cH\\
	&=
	(u,D^2 v)_\cH - (S^*S u,v)_\cH\\ 
	&=(u,S^*S v_0 - \wt{v} -U\wt{v})_\cH - (S^*S u, v_0 + \wt{v}+U\wt{v})_\cH\\
	&= (u,-\wt{v}-U\wt{v})_\cH - (S^*S u,\wt{v}+U\wt{v})_\cH \\
	& = -((S^*S+1)u,\wt{v}+U\wt{v})_\cH\\
	&=
	-(f,(1+U)\Pi(D^2+1)^{-1}g)_\cH,
\end{aligned}
\]
where the symmetry of $S^*S$ was used to cancel two terms between the third and fourth equalities. 
Hence,
\[
	W = -(1+U)\Pi(D^2+1)^{-1}
\]
and because $\ran \Pi = \ker(S^*-i)$, the rank of $W$ can be at most $n$.
\end{proof}

In the next proposition we show that the inclusion $(-\infty, -m] \cup [m,\infty)\subset\sigma\ti{ess}(A_{\tau,z})$ holds for any self-adjoint extension $A_{\tau,z}$ of $S_\tau$.

\begin{prop} \label{proposition_essential_spectrum1}
Let $A_{\tau,z}$ be any self-adjoint extension of $S_\tau$ defined in~\eqref{def_S_tau}. Then the inclusion $(-\infty, -m] \cup [m,\infty)\subset\sigma\ti{ess}(A_{\tau,z})$ holds.
\end{prop}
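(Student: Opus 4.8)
The plan is to construct explicit singular sequences (Weyl sequences) for $A_{\tau,z}$ at every point $\lambda$ in the set $(-\infty,-m]\cup[m,\infty)$, which will show that each such $\lambda$ lies in the essential spectrum. The natural starting point is the free massive Dirac operator on $\RR^2$, whose spectrum is exactly $(-\infty,-m]\cup[m,\infty)$ with only continuous (in fact absolutely continuous) spectrum; for each $\lambda$ in this set one can pick a plane-wave type spinor $\psi_0(x) = e^{i k\cdot x}\chi_k$ with $|k|^2 + m^2 = \lambda^2$ and $\chi_k\in\CC^2$ chosen so that $(-i\sigma\cdot\grad + m\sigma_3)\psi_0 = \lambda\psi_0$ pointwise. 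This $\psi_0$ is not $L^2$, so the standard remedy is to localize it with cutoffs $\eta_n$ supported far away from the broken line $\Gamma$ and with diameters tending to infinity; along a sequence of translations going to infinity inside one of the two half-plane-like components, the functions $u_n := \eta_n(\cdot - a_n)\psi_0(\cdot - a_n)$ (suitably normalized in $L^2$) satisfy $\|u_n\|_{\RR^2} = 1$ and $(A_{\tau,z}-\lambda)u_n\to 0$.

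First I would fix $\lambda$ with $|\lambda|\ge m$, solve the dispersion relation to produce the plane wave spinor $\psi_0$ with $(-i\sigma\cdot\grad+m\sigma_3)\psi_0=\lambda\psi_0$, and record the elementary fact that for a smooth compactly supported cutoff $\eta$ one has $(-i\sigma\cdot\grad+m\sigma_3)(\eta\psi_0) = \lambda\eta\psi_0 + (-i\sigma\cdot\grad\eta)\psi_0$, so the error term is controlled by $\|\grad\eta\|_\infty$ times the local mass of $\psi_0$ on the support of $\grad\eta$. Next I would choose a sequence of points $a_n\in\Omega_+$ (or $\Omega_-$) marching off to infinity along the axis of the wedge, and cutoffs $\eta_n$ supported in balls $B(a_n,R_n)$ with $R_n\to\infty$ but $R_n = o(\dist(a_n,\Gamma))$, so that for $n$ large the support of $u_n$ lies strictly inside one of the two open regions and in particular $u_n\in H^1(\RR^2\setminus\Gamma;\CC^2)$ trivially satisfies the transmission condition (both traces vanish) — hence $u_n\in\dom S_\tau\subset\dom A_{\tau,z}$ and $A_{\tau,z}u_n = S_\tau u_n$ acts as the free operator. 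A rescaling/normalization computation then shows $\|u_n\|_{\RR^2}$ is bounded below and $\|(A_{\tau,z}-\lambda)u_n\|_{\RR^2} \to 0$, because the normalized $L^2$-mass concentrating near $\grad\eta_n$ goes to zero as $R_n\to\infty$ (the plane wave has constant modulus, so the mass is proportional to volume, and the annular region where $\grad\eta_n\ne 0$ has volume $o(R_n^2)$ if the cutoff is chosen with $\|\grad\eta_n\|_\infty = O(1/R_n)$). After passing to a subsequence that converges weakly to zero (possible since the supports escape to infinity), this is a genuine singular sequence and Weyl's criterion gives $\lambda\in\sigma_\textup{ess}(A_{\tau,z})$.

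The only mild subtlety, and the step I would be most careful about, is making sure the construction is uniform across the whole half-lines $(-\infty,-m]$ and $[m,\infty)$ — in particular handling the endpoints $\lambda = \pm m$ where the spatial frequency $k$ degenerates to $0$ and the plane wave becomes constant; there the argument still works (a constant spinor times a spreading cutoff), but one should note it explicitly. One also needs that the spinor amplitude $\chi_k$ can be chosen of unit length depending continuously on $k$, which is immediate from diagonalizing the $2\times2$ symbol $\sigma\cdot k + m\sigma_3$. Since the whole construction takes place inside $\dom S_\tau$, the particular self-adjoint extension parameter $z$ plays no role whatsoever, which is why the inclusion holds for every $A_{\tau,z}$; this is worth emphasizing. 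No interaction with the wedge geometry, the matrix $M$, or the quadratic form of Theorem~\ref{t-quadraticform} is needed for this direction — those enter only in the companion propositions that pin down the complementary part of the essential spectrum for $\tau<0$.
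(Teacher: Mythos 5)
Your proposal is correct and is essentially the paper's own argument: the paper likewise uses localized plane waves $\frac1n\chi\big(\tfrac1n|x-x_n|\big)e^{i\sqrt{\lambda^2-m^2}\,x\cdot e_1}\big(\sqrt{\lambda^2-m^2}\,\sigma_1+m\sigma_3+\lambda\sigma_0\big)\zeta$ centered at points $x_n=(-1-n^2,0)^T$ escaping to infinity inside $\Omega_-$ with ball radius $n=o(\dist(x_n,\Gamma))$, so the supports miss $\Gamma$, the transmission condition is vacuous, and the sequence is a singular sequence for every extension $A_{\tau,z}$. The only cosmetic difference is that the paper obtains the endpoints $\lambda=\pm m$ from the closedness of the essential spectrum rather than by treating the degenerate frequency $k=0$ directly as you do.
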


\begin{proof}
Let $\la\in(-\infty, -m) \cup (m, \infty)$ be fixed and fix a vector $\zeta\in\CC^2\setminus\{0\}$ such that $(\sqrt{\la^2-m^2}\sigma_1+m\sigma_3+\la \sigma_0)\zeta\neq0$. Also let $\chi\in C_0^{\infty}(\RR)$ be chosen so that $\chi(r)=1$ for $|r|<1/2$, $\chi(r)=0$ for $|r|>1$, and $x_n=(-1-n^2,0)^T$. Now, consider the sequence 
\begin{align}\label{e-weylsequence}
\psi_n(x)=\frac{1}{n}\chi\left(\frac{1}{n}|x-x_n|\right)e^{i\sqrt{\la^2-m^2}x\cdot e_1}\left(\sqrt{\la^2-m^2}\sigma_1+m\sigma_3+\la \sigma_0 \right)\zeta,\qquad n\in\N,
\end{align}
where $e_1 = (1,0)^T$.
As in \cite[Theorem 5.7(i)]{BH}, it is possible to show that 
\begin{itemize}
    \item[(i)] $\psi_n\in \dom S_\tau$,
    \item[(ii)] $\|\psi_n\|^2_{\RR^2} = c>0$ for all $n$ with a constant $c$ independent of $n$,
    \item[(iii)] $\psi_n$ converges weakly to zero and,
    \item[(iv)] $(S_\tau-\la)\psi_n\to0$ as $n\to\infty$.
\end{itemize}
The sequence $\{\psi_n\}_{n=1}^{\infty}$ is thus a Weyl sequence for $\la$ and any self-adjoint extension $A_{\tau,z}$ of $S_\tau$, which implies $(-\infty, -m) \cup (m,\infty)\subset\sigma\ti{ess}(A_{\tau,z})$. Since the essential spectrum of any self-adjoint operator is always closed, the result follows.
\end{proof}

Proposition~\ref{proposition_essential_spectrum1} can now be combined with Theorem~\ref{t-quadraticform} to characterize the essential spectrum for any positive interaction strength $\tau$ such that $\tau\neq 2$.

\begin{cor} \label{corollary_sigma_ess_pos}
  Let $\tau \in (0, \infty) \setminus \{2\}$ and $A_{\tau,z}$ be any self-adjoint extension of $S_\tau$, defined in~\eqref{def_S_tau}. Then $\sigma\ti{ess}(A_{\tau,z}) = (-\infty, -m] \cup [m,\infty)$.
\end{cor}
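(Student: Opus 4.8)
The plan is to combine the lower bound already established in Proposition~\ref{proposition_essential_spectrum1} with a matching upper bound obtained by controlling the discrete spectrum of $S_\tau^* S_\tau$ through the quadratic form in Theorem~\ref{t-quadraticform}. First I would observe that by Proposition~\ref{proposition_essential_spectrum1} we already have $(-\infty,-m]\cup[m,\infty)\subseteq\sigma\ti{ess}(A_{\tau,z})$, so it remains to prove the reverse inclusion, i.e.\ that $A_{\tau,z}$ has no essential spectrum in the open interval $(-m,m)$. Since the essential spectrum of any self-adjoint extension of $S_\tau$ agrees with that of the distinguished extension $A_{\tau,1}$ by Corollary~\ref{corollary_sigma_ess} (the resolvent difference being finite rank), and since by Proposition~\ref{p-vladnote} the resolvents of $(A_{\tau,1})^2$ and $S_\tau^* S_\tau$ differ by a finite-rank operator, it suffices to show $\sigma\ti{ess}(S_\tau^* S_\tau)\subseteq[m^2,\infty)$: then $\sigma\ti{ess}((A_{\tau,z})^2)\subseteq[m^2,\infty)$ and hence $\sigma\ti{ess}(A_{\tau,z})\subseteq(-\infty,-m]\cup[m,\infty)$, which combined with the lower bound gives equality.

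The core estimate is that for $\tau>0$ the quadratic form identity~\eqref{e-quadraticform} gives, for every $u\in\dom S_\tau$,
\begin{equation*}
  \|S_\tau u\|^2_{\RR^2} = \|\grad u\|^2_{\RR^2\setminus\Gamma} + m^2\|u\|^2_{\RR^2} + \frac{2m}{\tau}\|u_+|_\Gamma - u_-|_\Gamma\|^2_\Gamma \geq m^2\|u\|^2_{\RR^2},
\end{equation*}
since all three terms on the right are nonnegative when $\tau>0$ and $m>0$. Consequently $S_\tau^* S_\tau \geq m^2$, so $\sigma(S_\tau^* S_\tau)\subseteq[m^2,\infty)$, and in particular $\sigma\ti{ess}(S_\tau^* S_\tau)\subseteq[m^2,\infty)$. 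This is exactly the sign condition that fails for $\tau<0$, where the boundary term $\frac{2m}{\tau}\|u_+|_\Gamma-u_-|_\Gamma\|^2_\Gamma$ is negative and a more delicate Neumann bracketing argument is needed; for $\tau>0$ it is immediate.

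Putting the pieces together: from $S_\tau^* S_\tau\geq m^2$ we get $\sigma\ti{ess}(S_\tau^* S_\tau)\subseteq[m^2,\infty)$; Proposition~\ref{p-vladnote} applied to $S=S_\tau$ (with deficiency indices $(1,1)$) and $D=A_{\tau,1}$ shows $(S_\tau^* S_\tau+1)^{-1}-((A_{\tau,1})^2+1)^{-1}$ is finite rank, so $\sigma\ti{ess}((A_{\tau,1})^2)=\sigma\ti{ess}(S_\tau^* S_\tau)\subseteq[m^2,\infty)$; by the spectral mapping theorem for $(A_{\tau,1})^2$ this yields $\sigma\ti{ess}(A_{\tau,1})\subseteq(-\infty,-m]\cup[m,\infty)$; and Corollary~\ref{corollary_sigma_ess} transfers this to an arbitrary $A_{\tau,z}$. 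The reverse inclusion is Proposition~\ref{proposition_essential_spectrum1}, completing the proof. I do not expect any genuine obstacle here, since the positivity of the boundary term for $\tau>0$ makes the form estimate trivial; the only point requiring a little care is the passage from $\sigma\ti{ess}$ of the square to $\sigma\ti{ess}$ of the operator, which is standard via $\sigma_\textup{ess}(T^2)=\{\lambda^2:\lambda\in\sigma_\textup{ess}(T)\}$ for self-adjoint $T$.
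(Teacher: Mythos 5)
Your proof is correct and follows essentially the same route as the paper: the inclusion $(-\infty,-m]\cup[m,\infty)\subseteq\sigma\ti{ess}(A_{\tau,z})$ from Proposition~\ref{proposition_essential_spectrum1}, the form bound $S_\tau^*S_\tau\ge m^2$ from Theorem~\ref{t-quadraticform} (valid precisely because the boundary term is nonnegative for $\tau>0$), and the finite-rank resolvent difference of Proposition~\ref{p-vladnote} to identify $\sigma\ti{ess}(S_\tau^*S_\tau)$ with $\sigma\ti{ess}((A_{\tau,z})^2)$. The only (harmless) deviation is that you take the two-sided lower inclusion directly from Proposition~\ref{proposition_essential_spectrum1} and use the squared operator only for the upper bound, whereas the paper closes the argument by invoking the symmetry of the essential spectrum from Corollary~\ref{corollary_sigma_ess}; both are valid.
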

\begin{proof}
 Propositions~\ref{p-vladnote} and~\ref{proposition_essential_spectrum1} immediately imply that $[m^2, \infty) \subset \sigma_\textup{ess}(S_\tau^* S_\tau)$. Moreover, Theorem~\ref{t-quadraticform} says that for any $u \in \dom (S_\tau^* S_\tau)$
  \begin{equation*}
    \begin{split} 
      (S_\tau^* S_\tau u, u)_{\mathbb{R}^2} &= \|S_\tau u\|^2_{\RR^2}=\|\grad u\|^2_{\RR^2\setminus\Gamma}+m^2\|u\|^2_{\RR^2}+\frac{2m}{\tau} \big\|u_+|_\Gamma-u_-|_\Gamma\big\|^2_{\Gamma} 
      \geq m^2 \|u\|^2_{\RR^2},
    \end{split}
  \end{equation*}
  which yields the reverse inclusion $\sigma_\textup{ess}(S_\tau^* S_\tau) \subset [m^2, \infty)$. Hence, $\sigma_\textup{ess}(S_\tau^* S_\tau) = [m^2, \infty)$. Proposition~\ref{p-vladnote} says that $S^*_\tau S_\tau$ and $(A_{\tau,z})^2$ differ by a finite-rank perturbation and so their essential spectra are equal. Corollary~\ref{corollary_sigma_ess} further shows that the essential spectrum of $A_{\tau,z}$ is symmetric about $0$ and the result follows.
\end{proof}

We now turn our attention to the case where $\tau$ is negative such that $\tau\neq -2$. The corresponding claim about the essential spectrum of any self-adjoint extension of $S_\tau$ from Theorem \ref{theorem_intro} will be shown in three steps. Begin by setting $\varepsilon_\tau := m \frac{\tau^2-4}{\tau^2+4}$. First, a Neumann bracketing argument is used to prove that $[\varepsilon_\tau^2, \infty) \subset \sigma_\textup{ess}(S_\tau^* S_\tau)$. The reverse inclusion is contained in Proposition \ref{p-essspec2} and uses a particular Weyl sequence construction. The final statements then follow by a similar argument as the proof of Corollary \ref{corollary_sigma_ess_pos}.

\begin{prop}\label{p-essspec1}
Let $\tau \in (-\infty, 0) \setminus \{ -2 \}$. Then
\begin{align*}
    \sigma\ti{ess}(S_\tau^*S_\tau) \subset \Bigg[m^2\left(\frac{\tau^2-4}{\tau^2+4}\right)^2,\infty\Bigg).
\end{align*}
\end{prop}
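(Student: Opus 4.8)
\emph{Sketch of a proof.} The plan is to produce the matching lower bound $\inf\sigma_{\mathrm{ess}}(S_\tau^*S_\tau)\ge\varepsilon_\tau^2$, with $\varepsilon_\tau=m\tfrac{\tau^2-4}{\tau^2+4}$, by a Neumann bracketing argument applied to the quadratic form $u\mapsto\|S_\tau u\|_{\RR^2}^2$ of Theorem~\ref{t-quadraticform}. First I would rewrite the boundary term in~\eqref{e-quadraticform} in Robin form on $\dom S_\tau$: since $-i(\sigma\cdot\nu)$ and $\sigma_3$ are unitary, the transmission condition~\eqref{e-transmission} forces $|u_+|_\Gamma-u_-|_\Gamma|=\tfrac{|\tau|}{2}\,|u_+|_\Gamma+u_-|_\Gamma|$ pointwise a.e.\ on $\Gamma$, and combining this with the parallelogram identity gives, for $\tau<0$,
\[
  \|S_\tau u\|_{\RR^2}^2=\|\grad u\|_{\RR^2\setminus\Gamma}^2+m^2\|u\|_{\RR^2}^2-c\big(\|u_+|_\Gamma\|_\Gamma^2+\|u_-|_\Gamma\|_\Gamma^2\big),\qquad c:=\frac{4m|\tau|}{\tau^2+4}>0 .
\]
A short computation then gives the identity $m^2-c^2=\varepsilon_\tau^2$, which is what ties the bracketing back to the claimed bound.

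Next I would fix $R>0$ and a sufficiently small $\delta\in(0,\omega)$ and cut $\RR^2$ along the circle $\{|x|=R\}$ and the four rays of polar angle $\pm\omega\pm\delta$ with $|x|>R$. This produces (up to null sets) a bounded cell $B_R$, two conical neighbourhoods $\calO_l,\calO_r$ of the rays $\Gamma_l,\Gamma_r$ of angular width $2\delta$, and two interface-free sectors $\calO_+\subset\Omega_+$ and $\calO_-\subset\Omega_-$. The new cuts meet $\Gamma$ only in a null set and, for each cell, the transmission condition localizes to that cell, so restricting $u\in\dom S_\tau$ to the cells only enlarges the form domain and the form splits additively; by the min--max principle it then suffices to check that the bottom of the essential spectrum of each Neumann cell operator $H^N_{\calO}$ is $\ge\varepsilon_\tau^2$. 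The cell $B_R$ is easy: its form domain embeds compactly into $L^2$ and the boundary term is form-compact, so $H^N_{B_R}$ has compact resolvent and empty essential spectrum. The cells $\calO_\pm$ carry no interface, so there $\|S_\tau u\|^2\ge\|\grad u\|^2+m^2\|u\|^2\ge m^2\|u\|^2$, and $m^2\ge\varepsilon_\tau^2$ because $\big|\tfrac{\tau^2-4}{\tau^2+4}\big|<1$ for $\tau\ne 0,\pm2$.

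The substantial case is $\calO_l$ ($\calO_r$ being symmetric). There $\Gamma\cap\calO_l=\Gamma_l\cap\calO_l$, and I would write the cell form in the Robin form above and then drop the transmission constraint, which enlarges the form domain to $H^1(\calO_l\cap\Omega_+;\CC^2)\oplus H^1(\calO_l\cap\Omega_-;\CC^2)$ and bounds $H^N_{\calO_l}$ from below by $\wt H_+\oplus\wt H_-$, where $\wt H_\pm$ acts componentwise as $-\Delta+m^2$ on the thin infinite sector $\calO_l\cap\Omega_\pm$ with the attractive Robin condition $\partial_\nu v=c\,v$ on $\Gamma_l$ and Neumann conditions on the remaining boundary. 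Each such sector sits inside the half-plane $\mathbb H_\pm$ bounded by the full line through $\Gamma_l$, and cutting $\mathbb H_\pm$ --- endowed with $-\Delta+m^2$ and the Robin condition $\partial_\nu v=cv$ on all of its boundary --- along the two curves of $\partial(\calO_l\cap\Omega_\pm)$ interior to it realizes $\wt H_\pm$ as one orthogonal summand of $H^N_{\mathbb H_\pm}$, whence $\wt H_\pm\ge\inf\sigma(H^N_{\mathbb H_\pm})$. Finally a Fourier transform along $\partial\mathbb H_\pm$ reduces $H^N_{\mathbb H_\pm}$ to the family of one-dimensional attractive Robin Laplacians on a half-line shifted by $k^2+m^2$, $k\in\RR$, each with spectrum $\{k^2+m^2-c^2\}\cup[k^2+m^2,\infty)$, so $\sigma(H^N_{\mathbb H_\pm})=[m^2-c^2,\infty)=[\varepsilon_\tau^2,\infty)$. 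Assembling the (five) estimates gives $\inf\sigma_{\mathrm{ess}}(S_\tau^*S_\tau)\ge\varepsilon_\tau^2$, which is the claim.

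The hard part will be keeping the comparison inequalities pointing in the right direction in the presence of the \emph{negative} boundary term: one cannot simply restrict a half-plane operator to a sector (restriction of $H^1$-functions is not injective, and the sign of the boundary term is wrong for a direct form comparison), so the sector operator has to be produced as a genuine orthogonal summand by \emph{cutting} a half-plane --- which forces the use of the half-plane bounded by the full line through each ray, together with a careful bookkeeping of which portion of that line inherits the Robin condition after the cut. A more routine but genuine technical point is that the restricted forms on the unbounded cells must be shown closed and bounded below; this follows from the trace estimate $\|v|_{\partial\Omega}\|^2\le\eta\|\grad v\|^2+C_\eta\|v\|^2$ with $\eta$ arbitrarily small (valid on a half-plane, and on the sectors via a bounded extension operator), so that the Robin term has relative form bound strictly below one.
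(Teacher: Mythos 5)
Your reformulation of the boundary term is correct and is, in essence, the same mechanism that drives the paper's proof: the transmission condition~\eqref{e-transmission} forces $|u_+|_\Gamma-u_-|_\Gamma|=\tfrac{|\tau|}{2}|u_+|_\Gamma+u_-|_\Gamma|$ pointwise, the parallelogram law converts $\tfrac{2m}{\tau}\|u_+|_\Gamma-u_-|_\Gamma\|_\Gamma^2$ into $-c\big(\|u_+|_\Gamma\|_\Gamma^2+\|u_-|_\Gamma\|_\Gamma^2\big)$ with $c=\tfrac{4m|\tau|}{\tau^2+4}$, and indeed $m^2-c^2=\varepsilon_\tau^2$. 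The bounded cell and the interface-free sectors are also handled correctly. The gap is in the only substantial cell, $\calO_l$. After decoupling, $\wt H_\pm$ is an attractive Robin Laplacian (shifted by $m^2$) on a \emph{truncated thin sector}, with Robin data on one long edge and Neumann data on the other edge and on the arc. The operator inequality you need, $\wt H_\pm\ge m^2-c^2$, is false, and the argument you give for it runs backwards: cutting the half-plane $\mathbb{H}_\pm$ along interior curves and imposing natural (Neumann) conditions there produces an operator that is a \emph{lower} bound for $H^N_{\mathbb{H}_\pm}$ -- exactly as in your own first-level bracketing -- so its summands, among them $\wt H_\pm$, may have spectrum strictly below $\inf\sigma(H^N_{\mathbb{H}_\pm})=m^2-c^2$. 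They do: reflecting across the Neumann edge identifies the model wedge of aperture $\delta$ with one Robin and one Neumann edge with the symmetric part of the Robin wedge of aperture $2\delta$, whose ground state energy is $m^2-c^2/\sin^2\delta\ll m^2-c^2$; similarly, near the narrow inner end of your truncated sector the transverse Robin/Neumann problem on an interval of width $w\approx R\delta$ has ground state $\approx m^2-c/w$. Being an orthogonal summand of a Neumann-\emph{bracketed} operator never yields a lower bound by the unbracketed one; $\wt H_\pm$ is not a reducing part of $H^N_{\mathbb{H}_\pm}$.

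What would suffice is $\inf\sigma_{\rm ess}(\wt H_\pm)\ge\varepsilon_\tau^2$, since the essential spectrum of a finite orthogonal sum is the union of the pieces' essential spectra; that is plausibly true for the truncated sector, but proving it is a problem of essentially the same difficulty as the original one, and your sketch does not address it. The paper avoids the issue by replacing the thin sectors with \emph{constant-width} strips of half-width $\gamma$ around each ray, with the interface down the middle: there one may discard the longitudinal kinetic term and, for a.e.\ longitudinal coordinate, bound the transverse integral by the lowest eigenvalue $E(\gamma)$ of an explicit one-dimensional problem on $(-\gamma,\gamma)$ (after diagonalizing the transmission matrix this is precisely your one-sided Robin problem), whose characteristic equation $k\tanh(k\gamma)=c$ gives $E(\gamma)=m^2-k_\gamma^2<\varepsilon_\tau^2$ and $E(\gamma)\to\varepsilon_\tau^2$ as $\gamma\to\infty$. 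Note that even in this corrected geometry each finite-width cell delivers a bound strictly \emph{below} $\varepsilon_\tau^2$, and the proposition is recovered only in the limit $\gamma\to\infty$ -- a further point your plan does not anticipate, since you expect each cell to give $\ge\varepsilon_\tau^2$ on the nose.
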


\begin{proof}
As indicated in Figure \ref{f-neumannbracketing}, we define for a fixed $\gamma > 0$ the following domains $\Omega_j \subset \mathbb{R}^2$, $j \in \{ 1, \dots, 5 \}$:
\begin{equation*}
  \begin{split}
    \Omega_1 &:= \left\{ \left(\frac{\gamma}{\sin \omega} + r \cos \varphi, r \sin \varphi \right): r > 0, \varphi \in (-\omega, \omega) \right\}, \\
    \Omega_2 &:= \left\{ \left(-\frac{\gamma}{\sin \omega} + r \cos \varphi, r \sin \varphi \right): r > 0, \varphi \in (\omega, 2 \pi - \omega) \right\}, \\
    \Omega_3 &:= \left\{ \left(r \cos \omega + s \sin \omega, r \sin \omega - s \cos \omega \right): r > \frac{\gamma}{\tan \omega}, s \in (-\gamma, \gamma) \right\}, \\
    \Omega_4 &:= \left\{ \left(r \cos \omega + s \sin \omega, -r \sin \omega + s \cos \omega \right): r > \frac{\gamma}{\tan \omega}, s \in (-\gamma, \gamma) \right\}, \\
    \Omega_5 &:= \mathbb{R}^2 \setminus \overline{\big( \Omega_1 \cup \Omega_2 \cup \Omega_3 \cup \Omega_4 \big)}.
  \end{split}
\end{equation*}

\begin{figure}[h]
\centering
\includegraphics[scale=1]{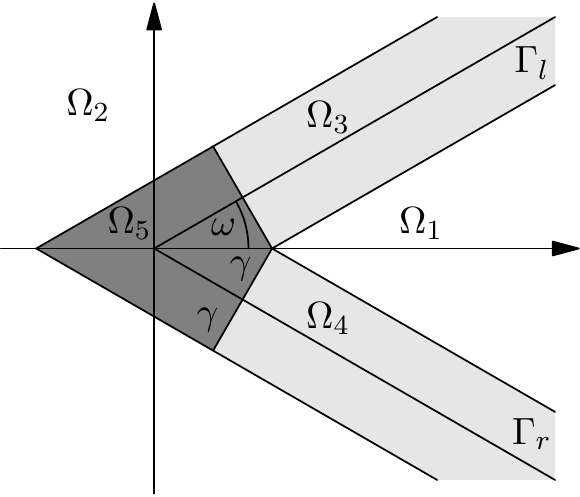}
\caption{Decomposition of $\RR^2$ for Neumann bracketing. The domains $\Omega_3$ and $\Omega_4$ are depicted in light gray, the set $\Omega_5$ in dark gray.}
\label{f-neumannbracketing}
\end{figure}

In each of the regions $\Omega_j$, we define a quadratic form in $L^2(\Omega_j)$: for $j=1,2$ by
\begin{align*}
    \mathfrak{b}_j[u]&=\|\grad u\|^2\ci{\Omega_j} + m^2\|u\|^2\ci{\Omega_j},\quad\dom \mathfrak{b}_j = H^1(\Omega_j; \CC^2),
\end{align*}
and for $j=3,4,5$ with $\Gamma_j := \Gamma \cap \Omega_j$ by
\begin{align*}
    \mathfrak{b}_j[u]&=\|\grad u\|^2_{\Omega_j\setminus\Gamma_j}+m^2\|u\|^2_{\Omega_j}+\frac{2m}{\tau}\|u_+|_{\Gamma_j}-u_-|_{\Gamma_j}\|_{\Gamma_j}^2, \\
    \dom \mathfrak{b}_j &=\big\{u\in H^1(\Omega_j\setminus\Gamma_j;\CC^2)~:~u_-|_{\Gamma_j}=Mu_+|_{\Gamma_j}\big\},
\end{align*}
where functions $u \in L^2(\Omega_j; \mathbb{C}^2)$ are written with the notation $u_\pm = u \upharpoonright (\Omega_j \cap \Omega_\pm)$. The quadratic forms $\mathfrak{b}_j$, $j \in \{ 1, \dots, 5 \}$, are closed and semibounded from below. The associated self-adjoint operators are denoted by $B_j$. If we define $T=\bigoplus_{j=1}^5 B_j$, then we have, in the sense of semibounded self-adjoint operators, $S_\tau^* S_\tau \geq T$; i.e. for any $u \in \dom S_\tau$ there holds $u\in\dom\mathfrak{t}$ and $\|S_\tau u\|_{\RR^2}^2 \ge \mathfrak{t}[u]$, where $\mathfrak{t}$ is the quadratic
form in $L^2(\RR^2;\CC^2)$ associated with the operator $T$.  The min-max principle implies
\begin{equation} \label{equation_inf_sigma_ess}
  \inf \sigma_\textup{ess}(S_\tau^* S_\tau) \geq \inf \sigma_\textup{ess}(T).
\end{equation}

Begin by observing that $B_1$ and $B_2$ are shifted Neumann Laplacians in $\Omega_1$ and $\Omega_2$, respectively, and hence 
\begin{equation} \label{estimate_B_1}
  B_1,B_2\geq m^2.
\end{equation} 
Moreover, as $\Omega_5$ is a bounded Lipschitz domain, $\dom \mathfrak{b}_5$ is compactly embedded in $L^2(\Omega_5; \mathbb{C}^2)$ and thus, 
\begin{equation} \label{estimate_B_5}
  \sigma_\textup{ess}(B_5)=\emptyset.
\end{equation}

It therefore remains to estimate $\inf \sigma_\textup{ess}(B_j)$, $j \in \{ 3, 4 \}$. Let $\Omega_\gamma:=\RR_+\times(-\gamma, \gamma)$ and
\begin{align*}
    M_j=\frac{4+\tau^2}{4-\tau^2}\sigma_0+\frac{4\tau}{4-\tau^2}i\sigma_3 (\sigma \cdot \nu_j), \qquad \nu_j = \nu \upharpoonright (\Gamma \cap \Omega_j).
\end{align*}
Define $\widetilde{\Gamma} := \mathbb{R}_+ \times \{ 0 \}$.
Then, a rotation by $\pm \omega$, translation by $(-\frac{\gamma}{\tan \omega}, 0)$, and finally for $\Omega_3$ a change of variables $y \rightarrow -y$ shows that $B_j$, $j\in\{3,4\}$, is unitarily equivalent to a self-adjoint operator $\widetilde{B}_j$ associated with the closed and semibounded quadratic form
\begin{align*}
    \wt{\mathfrak{b}}_j[u]&=\|\grad u\|^2_{\Omega_\gamma \setminus \widetilde{\Gamma}}+m^2\|u\|^2_{\Omega_\gamma}+\frac{2m}{\tau}\big\|u_+|_{\widetilde{\Gamma}}-u_-|_{\widetilde{\Gamma}} \big\|_{\widetilde{\Gamma}}^2, \\
    \dom \wt{\mathfrak{b}}_j &= \big\{u\in H^1(\Omega_\gamma \setminus \widetilde{\Gamma};\CC^2)\colon u_-|_{\widetilde{\Gamma}}=M_j u_+|_{\widetilde{\Gamma}}\big\},
\end{align*}
where $u_+ = u|_{ {\RR_+\times(0,\gamma)}}$ and $u_- =
	u|_{\RR_+\times(-\gamma,0)}$.
Set $\Theta_j := \frac{1}{\sqrt{2}} (\sigma_0 + i (\sigma \cdot \nu_j))$. A direct calculation shows that $\Theta_j$ is a unitary matrix and that
\begin{equation} \label{def_wt_M}
  \Theta_j^* M_j \Theta_j = \frac{4+\tau^2}{4-\tau^2}\sigma_0-\frac{4\tau}{4-\tau^2}\sigma_3 =: \wt{M}.
\end{equation}
We define for $j\in\{3,4\}$
\[
\mathfrak{c}[u] := \wt{\mathfrak{b}}_j[\Theta_j u], 
\qquad\dom\mathfrak{c} :=
\Theta_j^*\big(\dom \widetilde{\mathfrak{b}}_j\big),\]
and note that this form is independent of $j$ and explicitly given by
\begin{align*}
    \mathfrak{c}[u]&=\|\nabla u\|^2_{\Omega_\gamma \setminus\widetilde{\Gamma}}+m^2\|u\|^2_{\Omega_\gamma}+\frac{2m}{\tau} \big\|u_+|_{\widetilde{\Gamma}}-u_-|_{\widetilde{\Gamma}} \big\|_{\widetilde{\Gamma}}^2, \\
    \dom \mathfrak{c} &=\left\{u\in H^1(\Omega_\gamma \setminus \widetilde{\Gamma};\CC^2)\colon u_-|_{\widetilde{\Gamma}}=\wt{M} u_+|_{\widetilde{\Gamma}}\right\}.
\end{align*}
Since $\Theta_j$ is unitary, we have for $j \in \{ 3, 4 \}$ 
\begin{equation} \label{estimate_sigma_ess}
  \inf \sigma_\textup{ess}(B_{j}) \geq \inf \left\{ \frac{\mathfrak{c}[u]}{\|u\|_{\Omega_\gamma}^2}\colon u \in \dom \mathfrak{c}\setminus\{0\} \right\}.
\end{equation}
Let $u \in \dom \mathfrak{c}$. Then, it follows in a similar way as in the proof of \cite[Lemma~4.13]{HOBP}, cf. also Appendix~\ref{appendix_auxiliary_problem} for details, for almost every $x > 0$ that
\begin{equation} \label{estimate_lower_bound}
  \begin{split}
    \int_{(-\gamma, \gamma) \setminus \{ 0 \}} &\big( |\partial_x u(x,y)|^2 + |\partial_y u(x,y)|^2 \big) dy + m^2 \int_{(-\gamma, \gamma)} |u(x,y)|^2 dy +\frac{2m}{\tau} |u(x, 0^+)-u(x, 0^-)|^2 \\
    &\geq \int_{(-\gamma, \gamma) \setminus \{ 0 \}} |\partial_y u(x,y)|^2 dy + m^2 \int_{(-\gamma, \gamma)} |u(x,y)|^2 dy +\frac{2m}{\tau} |u(x, 0^+)-u(x, 0^-)|^2  \\
    &\geq E(\gamma) \int_{(-\gamma, \gamma)} |u(x,y)|^2 dy,
  \end{split}
\end{equation}
where $E(\gamma) \in [0, m^2)$ such that
\begin{align*}
    \lim_{\gamma \rightarrow \infty} E(\gamma) = m^2\left(\frac{\tau^2-4}{\tau^2+4}\right)^2.
\end{align*}
This implies that
\begin{equation*}
  \begin{split}
    \mathfrak{c}[u] &= \int_{\mathbb{R}_+} \bigg( \int_{(-\gamma, \gamma) \setminus \{ 0 \}} \big(|\partial_x u(x,y)|^2 + |\partial_y u(x,y)|^2\big) dy \\
    &\qquad\qquad \qquad + m^2 \int_{(-\gamma, \gamma)} |u(x,y)|^2 dy +\frac{2m}{\tau} |u(x, 0^+)-u(x, 0^-)|^2\bigg) dx \\
    &\geq \int_{\mathbb{R}_+} E(\gamma) \int_{(-\gamma, \gamma)} |u(x,y)|^2 dy dx.
  \end{split}
\end{equation*}
Therefore, equations \eqref{equation_inf_sigma_ess}--\eqref{estimate_B_5} and~\eqref{estimate_sigma_ess} imply that with $\gamma \rightarrow \infty$ we have $\inf\sigma\ti{ess}(S_\tau^* S_\tau)\geq m^2\big(\frac{\tau^2-4}{\tau^2+4}\big)^2$.
\end{proof}

We now prove the opposite inclusion.

\begin{prop}\label{p-essspec2}
Let $\tau \in (-\infty, 0) \setminus \{ -2 \}$. Then
\begin{align*}
    \Bigg[m^2\left(\frac{\tau^2-4}{\tau^2+4}\right)^2,\infty\Bigg)\subset\sigma\ti{ess}(S_\tau^*S_\tau).
\end{align*}
\end{prop}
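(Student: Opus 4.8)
The plan is to show that every $\mu\in[\varepsilon_\tau^2,\infty)$ — where $\varepsilon_\tau=m\frac{\tau^2-4}{\tau^2+4}$, so $\varepsilon_\tau^2=m^2(\frac{\tau^2-4}{\tau^2+4})^2$ — belongs to $\sigma\ti{ess}(S_\tau^*S_\tau)$, by producing a singular sequence for the quadratic form of $S_\tau^*S_\tau$ from Theorem~\ref{t-quadraticform}. The idea is that far out along the half-line $\Gamma_l$ the operator is indistinguishable from its straight-line counterpart, whose essential spectrum is $[\varepsilon_\tau^2,\infty)$; the threshold $\varepsilon_\tau^2$ is generated by the lowest transverse mode, which for $\tau<0$ is a genuine $L^2$-eigenfunction. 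Concretely, let $\mathfrak L_\perp$ be the self-adjoint operator in $L^2(\RR;\CC^2)$ associated with the form $\phi\mapsto\|\phi'\|^2_\RR+m^2\|\phi\|^2_\RR+\tfrac{2m}{\tau}|\phi(0^+)-\phi(0^-)|^2$ on $\{\phi\in H^1(\RR\setminus\{0\};\CC^2):\phi(0^-)=\wt M\phi(0^+)\}$, with $\wt M$ as in~\eqref{def_wt_M}. Since $\wt M=\operatorname{diag}\!\big(\tfrac{2-\tau}{2+\tau},\tfrac{2+\tau}{2-\tau}\big)$ decouples the components, a direct computation (cf.\ Appendix~\ref{appendix_auxiliary_problem}) shows that $\varepsilon_\tau^2<m^2$ is an eigenvalue of $\mathfrak L_\perp$ with a normalized eigenfunction $\phi_0$ equal, away from $0$, to $Ae^{-\kappa|\eta|}$ times a fixed vector (the jump at $0$ being dictated by $\wt M$), where $\kappa=\tfrac{-4m\tau}{\tau^2+4}>0$ and $\kappa^2=m^2-\varepsilon_\tau^2$; in particular $|\phi_0(\eta)|+|\phi_0'(\eta)|\le Ce^{-\kappa|\eta|}$, $-\phi_0''+m^2\phi_0=\varepsilon_\tau^2\phi_0$ on $\RR\setminus\{0\}$, and, as $\phi_0\in\dom\mathfrak L_\perp$, $\phi_0$ obeys the natural boundary condition $\wt M\phi_0'(0^-)-\phi_0'(0^+)+\tfrac{2m}{\tau}(\sigma_0-\wt M)^2\phi_0(0^+)=0$.

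Fix $\mu\ge\varepsilon_\tau^2$ and put $k:=\sqrt{\mu-\varepsilon_\tau^2}$. Let $\Phi_l$ be the Euclidean isometry of $\RR^2$ (a rotation followed by a reflection) mapping $\Gamma_l$ onto $\RR_+\times\{0\}$ with $\Omega_+$ lying locally on the side $\{\eta>0\}$, and let $\Theta:=\tfrac1{\sqrt2}(\sigma_0+i(\sigma\cdot\nu_l))$, so that $\Theta^*M_l\Theta=\wt M$ by~\eqref{def_wt_M}. With $\psi\in C_0^\infty((0,1))$, $\chi\in C_0^\infty(\RR)$ equal to $1$ near $0$, and $R_n\to\infty$ chosen with $R_n=o(n^3)$, define $u_n$ to be the function which, in the straightened coordinates $(\xi,\eta)$, equals
\[
 n^{-1/2}\,\psi\!\big(n^{-1}(\xi-n^3)\big)\,e^{ik\xi}\,\chi(\eta/R_n)\,\Theta\,\phi_0(\eta)
\]
and vanishes elsewhere. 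For $n$ large, $\operatorname{supp}u_n$ is a rectangle at distance $\sim n^3$ from the corner which avoids both the corner and the other half $\Gamma_r$ — this is where $R_n=o(n^3)$ is used — so on $\operatorname{supp}u_n$ the set $\RR^2\setminus\Gamma$ reduces to the two half-planes $\{\pm\eta>0\}$; hence $u_n\in H^1(\RR^2\setminus\Gamma;\CC^2)$ is compactly supported, smooth on each of $\overline{\Omega_+}$ and $\overline{\Omega_-}$, and — since $\Theta^*M_l\Theta=\wt M$ and $\phi_0(0^-)=\wt M\phi_0(0^+)$ — satisfies $u_{n,-}|_\Gamma=Mu_{n,+}|_\Gamma$, so $u_n\in\dom S_\tau$, which is a form core for $S_\tau^*S_\tau$. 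Clearly $\|u_n\|^2_{\RR^2}\to\|\psi\|^2_{L^2(\RR)}\|\phi_0\|^2_{L^2(\RR)}>0$, and $u_n\rightharpoonup0$ in $L^2(\RR^2;\CC^2)$ because the supports escape to infinity.

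It remains to estimate $(S_\tau^*S_\tau-\mu)u_n$. For $w\in\dom S_\tau$, Theorem~\ref{t-quadraticform} and polarization give $\langle S_\tau u_n,S_\tau w\rangle_{\RR^2}=\langle\grad u_n,\grad w\rangle_{\RR^2\setminus\Gamma}+m^2\langle u_n,w\rangle_{\RR^2}+\tfrac{2m}{\tau}\langle u_{n,+}|_\Gamma-u_{n,-}|_\Gamma,w_+|_\Gamma-w_-|_\Gamma\rangle_\Gamma$. Integrating by parts over $\Omega_\pm$ and using that
\[
 (-\Delta+m^2)u_n=(k^2+\varepsilon_\tau^2)u_n+r_n=\mu u_n+r_n,\qquad\|r_n\|_{\RR^2}\to0,
\]
which follows from $-\partial_\xi^2(\psi(n^{-1}(\xi-n^3))e^{ik\xi})=k^2\psi e^{ik\xi}+O(n^{-1})$ in $L^2$, from the transverse equation $-\phi_0''+m^2\phi_0=\varepsilon_\tau^2\phi_0$, and from the fact that the commutators with $\chi(\cdot/R_n)$ are supported where $\phi_0$ and $\phi_0'$ are $O(e^{-\kappa R_n})$, one obtains $\langle S_\tau u_n,S_\tau w\rangle_{\RR^2}-\mu\langle u_n,w\rangle_{\RR^2}=\langle r_n,w\rangle_{\RR^2}+\mathcal B_n(w)$, where $\mathcal B_n(w)=\langle\partial_\nu u_{n,+}|_\Gamma,w_+|_\Gamma\rangle_\Gamma-\langle\partial_\nu u_{n,-}|_\Gamma,w_-|_\Gamma\rangle_\Gamma+\tfrac{2m}{\tau}\langle u_{n,+}|_\Gamma-u_{n,-}|_\Gamma,w_+|_\Gamma-w_-|_\Gamma\rangle_\Gamma$. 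Since $u_n$ is carried by a compact piece of $\Gamma_l$, where $w_-|_\Gamma=M_lw_+|_\Gamma$ and $u_{n,-}|_\Gamma=M_lu_{n,+}|_\Gamma$, one may rewrite $\mathcal B_n(w)=\int_{\Gamma_l}\big[\partial_\nu u_{n,+}-M_l^*\partial_\nu u_{n,-}+\tfrac{2m}{\tau}(\sigma_0-M_l)^*(\sigma_0-M_l)u_{n,+}\big]\cdot\overline{w_+}\,ds$; passing to the straightened picture (with $\partial_\nu=-\partial_\eta$ and $\Theta^*M_l\Theta=\wt M$), the bracket equals $\Theta\,e^{ik\xi}$ times the left-hand side of the natural boundary condition satisfied by $\phi_0$, hence vanishes, so $\mathcal B_n(w)=0$ for all $w\in\dom S_\tau$. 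Therefore $\langle S_\tau u_n,S_\tau w\rangle_{\RR^2}-\mu\langle u_n,w\rangle_{\RR^2}=\langle r_n,w\rangle_{\RR^2}$ for all $w$ in a form core, so by the first representation theorem $u_n\in\dom(S_\tau^*S_\tau)$ with $(S_\tau^*S_\tau-\mu)u_n=r_n\to0$; combined with $\liminf\|u_n\|_{\RR^2}>0$ and $u_n\rightharpoonup0$, this yields $\mu\in\sigma\ti{ess}(S_\tau^*S_\tau)$.

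The main obstacles lie in the last two steps: making the geometric localization rigorous — that the rectangle at distance $\sim n^3$ along $\Gamma_l$ genuinely sits in a region where $\Gamma$ is a straight segment bordered by two half-planes, which forces $R_n=o(n^3)$ — and, above all, the identity $\mathcal B_n(w)\equiv0$, i.e.\ that the transplanted transverse mode yields a test function in $\dom(S_\tau^*S_\tau)$. The latter is exactly where it is essential that $\phi_0$ be an eigenfunction of $\mathfrak L_\perp$ (so that it satisfies the natural boundary condition, not merely the transverse ODE), and where the conjugation by $\Theta$ together with the identification $\partial_\nu=\pm\partial_\eta$ and the relation $\Theta^*M_l\Theta=\wt M$ — already exploited in Proposition~\ref{p-essspec1} — must be tracked carefully.
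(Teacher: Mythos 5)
Your construction is correct and coincides in essence with the paper's own proof: the paper's singular sequence $\psi_n$ uses exactly your transverse profile (namely $v(\zeta)=ae^{-z\zeta}$ for $\zeta>0$ and $M_lae^{z\zeta}$ for $\zeta<0$ with $z=-\tfrac{4m\tau}{\tau^2+4}=\kappa$, which is $\Theta\phi_0$ in your notation), modulated by $e^{i\lambda\xi}$ and cut off in a box drifting to infinity along $\Gamma_l$, and the cancellation you phrase as the natural boundary condition for $\phi_0$ is verified there via the matrix identities $z(M_l^2+\sigma_0)=-\tfrac{8m\tau}{4-\tau^2}M_l$ and $\tfrac{2m}{\tau}(\sigma_0-M_l)^2=\tfrac{8m\tau}{4-\tau^2}M_l$. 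The only cosmetic difference is that the paper concludes with the form version of Weyl's criterion from \cite{KL} (tolerating an $O(1/n)$ error), whereas you upgrade to a genuine operator Weyl sequence via the first representation theorem; both routes are valid.
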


\begin{proof}
Set $z=-m \frac{4 \tau}{\tau^2+4}$, $x_n=n^2+K$ for $n\in\NN$ and some $K>0$, $\lambda > 0$, and choose $\chi\in C_0^{\infty}(\RR)$ such that $\chi(r)=1$ for $|r|\leq 1/2$ and $\chi(r)=0$ for $|r|>1$. Define the function
\begin{align*}
    \psi_n(x,y)&:=\frac{1}{\sqrt{n}}\chi\left(\frac{\xi-x_n}{n}\right)\chi\left(\frac{c \zeta}{n}\right)e^{i\la \xi} v(\zeta), \\
    &\text{ with }\quad v(\zeta)=\begin{cases}
    a e^{-z\zeta}, & \zeta>0, \\
    M_l ae^{z \zeta}, &\zeta<0,
    \end{cases} \qquad \begin{pmatrix} \xi \\ \zeta \end{pmatrix} = \begin{pmatrix} \cos \omega & \sin \omega \\ \sin \omega &-\cos \omega \end{pmatrix} \begin{pmatrix} x \\ y \end{pmatrix},
\end{align*}
where $0 \neq a\in\CC^2$, $M_l$ is the matrix in~\eqref{def_M_l_r} and $c >0$ is chosen so that
$\supp(\psi_n)\cap\Gamma_r=\emptyset$. We will show that
\begin{enumerate}
    \item[(i)] $\psi_n\in\dom S_\tau$ for all $n$,
    \item[(ii)] $c_1 \leq \|\psi_n\|^2_{\RR^2}\leq c_2$ for constants $c_2\ge c_1>0$,
    \item[(iii)] $\psi_n\perp\psi_m$ for $m\neq n$, so that $\psi_n$ converges weakly to zero and,
    \item[(iv)] there exists a constant $c_3 >0$ such that for any $u\in\dom S_\tau$, 
    \begin{align*}
        \left|\left\langle\left(S_\tau+\sqrt{m^2-z^2+\la^2}\right)u,\left(S_\tau-\sqrt{m^2-z^2+\la^2}\right)\psi_n \right\rangle_{\RR^2}\right|\leq\frac{c_3}{n} \left( \| S_\tau u \|^2_{\mathbb{R}^2} + \| u \|^2_{\mathbb{R}^2} \right)^{1/2}.
    \end{align*}
   \end{enumerate}
Combined, these facts show that $\psi_n$ satisfies Weyl's criterion for quadratic forms in the sense of \cite[Appendix]{KL} and $m^2-z^2+\la^2 =m^2 \big(\frac{\tau^2-4}{\tau^2+4}\big)^2 + \lambda^2 \in\sigma\ti{ess}(S_\tau^* S_\tau)$. The choice of $\la>0$ was arbitrary and the essential spectrum is always closed, so the result follows. 

It remains to show claims (i)--(iv) above. Item (i) is clear by construction. To see claim (ii), we compute
\begin{equation*} 
  \begin{split}
    \| \psi_n \|_{\mathbb{R}^2}^2 &= \int_{\mathbb{R}^2} \frac{1}{n} \left| \chi\left(\frac{\xi-x_n}{n}\right)\chi\left(\frac{c \zeta}{n}\right)\right|^2 | v(\zeta)|^2 d x dy \\
    &= \int_{-1}^1 |\chi(x')|^2 dx' \int_{-n}^n \left|\chi\left(\frac{c \zeta}{n}\right)\right|^2 | v(\zeta)|^2 d \zeta,
  \end{split}
\end{equation*}
where the substitution $x' = \frac{\xi-x_n}{n}$ was done. The properties of the function $v$ then imply that claim (ii) is true.

In light of claim  (ii), it is easy to see that claim (iii) holds true, as the functions $\psi_n$ and $\psi_m$ for $n\ne m$ have disjoint supports.

Claim (iv) makes use of Theorem~\ref{t-quadraticform} and the transformation from the variables $(x,y)$ to $(\xi, \zeta)$. Let $u \in \dom S_\tau$ and set $\tilde{u}(\xi, \zeta) := u(x(\xi, \zeta), y(\xi, \zeta))$, $\tilde{\psi}_n(\xi, \zeta) := \psi_n(x(\xi, \zeta), y(\xi, \zeta))$, and $\widetilde{\Gamma} := \mathbb{R}_+ \times \{ 0 \}$. These facts allow for the definition 
\begin{equation*}
  \begin{split}
    I := \bigg\langle&\left(S_\tau+\sqrt{m^2-z^2+\la^2}\right)u,\left(S_\tau-\sqrt{m^2-z^2+\la^2}\right)\psi_n \bigg\rangle_{\RR^2} 
    = \left(z^2-\lambda^2\right) (\tilde{u}, \tilde{\psi}_n)_{\RR^2}  \\
    &+(\grad \tilde{u}, \grad \tilde{\psi}_n)_{\RR^2\setminus\widetilde{\Gamma}}
    +\frac{2m}{\tau} \int_{x_n-n}^{x_n+n} (\tilde{u}_+|_{\widetilde{\Gamma}}(\xi) - \tilde{u}_-|_{\widetilde{\Gamma}}(\xi)) \overline{(\tilde{\psi}_{n, +}|_{\widetilde{\Gamma}}(\xi) - \tilde{\psi}_{n, -}|_{\widetilde{\Gamma}}(\xi))} d \xi.
  \end{split}
\end{equation*}
Note that $u, \psi_n \in \dom S_\tau$ implies $\tilde{u}_-|_{\widetilde{\Gamma}}(\xi) = M_l \tilde{u}_+|_{\widetilde{\Gamma}}(\xi)$, $\tilde{\psi}_{n, -}|_{\widetilde{\Gamma}}(\xi) = M_l \tilde{\psi}_{n,+}|_{\widetilde{\Gamma}}(\xi)$ and $M_l=M_l^*$. Integration by parts then yields
\begin{equation} \label{equation_singular_sequence1}
  \begin{split}
    I &= (\tilde{u}, -\Delta \tilde{\psi}_n)_{\RR^2\setminus\widetilde{\Gamma}} 
    +\left(z^2-\lambda^2\right) (\tilde{u}, \tilde{\psi}_n)_{\RR^2} \\ 
    &+ \int_{x_n-n}^{x_n+n} \tilde{u}_+|_{\widetilde{\Gamma}}(\xi) \overline{\left( \frac{2m}{\tau} (\sigma_0 - M_l)^2 \tilde{\psi}_{n, +}|_{\widetilde{\Gamma}}(\xi) + \left( M_l \frac{\partial \tilde{\psi}_{n, -}}{\partial \zeta}\bigg|_{\widetilde{\Gamma}}(\xi) - \frac{\partial \tilde{\psi}_{n, +}}{\partial \zeta}\bigg|_{\widetilde{\Gamma}}(\xi) \right) \right)} d \xi.
  \end{split}
\end{equation}
We then calculate
\begin{align*}
    \frac{\partial \tilde{\psi}_{n, +}}{\partial \zeta}(\xi, \zeta) &= 
    \frac{c}{n^{3/2}} \chi\left(\frac{\xi - x_n}{n} \right) \chi'\left( \frac{c \zeta}{n} \right) e^{i \lambda \xi} a e^{-z \zeta} \\
    & \hspace{12em}- \frac{z}{n^{1/2}} \chi\left(\frac{\xi - x_n}{n} \right) \chi\left( \frac{c \zeta}{n} \right) e^{i \lambda \xi} a e^{-z \zeta},  \\
    \frac{\partial \tilde{\psi}_{n, -}}{\partial \zeta}(\xi, \zeta) &= 
    \frac{c}{n^{3/2}} \chi\left(\frac{\xi - x_n}{n} \right) \chi'\left( \frac{c \zeta}{n} \right) e^{i \lambda \xi} M_l a e^{z \zeta} \\
    & \hspace{12em}+ \frac{z}{n^{1/2}} \chi\left(\frac{\xi - x_n}{n} \right) \chi\left( \frac{c \zeta}{n} \right) e^{i \lambda \xi} M_l a e^{z \zeta}.
\end{align*}
Using that $\chi(0) = 1$ and $\chi'(0) = 0$ we infer 
\begin{equation*}
 \begin{split}
    M_l \frac{\partial \tilde{\psi}_{n, -}}{\partial \zeta}\big|_{\widetilde{\Gamma}}(\xi) - \frac{\partial \tilde{\psi}_{n, +}}{\partial \zeta}\big|_{\widetilde{\Gamma}}(\xi) = \frac{z}{n^{1/2}} \chi\left(\frac{\xi - x_n}{n} \right) e^{i \lambda \xi} (M_l^2 + \sigma_0) a = z (M_l^2 + \sigma_0) \tilde{\psi}_{n,+}|_{\widetilde{\Gamma}}(\xi). 
\end{split}
\end{equation*}
A direct calculation shows that $z (M_l^2 + \sigma_0) = -\frac{8m \tau}{4-\tau^2} M_l$ and $\frac{2m}{\tau} (\sigma_0-M_l)^2 = \frac{8m \tau}{4-\tau^2} M_l$. Hence,
we obtain that
\begin{equation} \label{equation_singular_sequence2}
    \int_{x_n-n}^{x_n+n} \tilde{u}_+|_{\widetilde{\Gamma}}(\xi) \overline{\left( \frac{2m}{\tau} (\sigma_0 - M_l)^2 \tilde{\psi}_{n, +}|_{\widetilde{\Gamma}}(\xi) + \left( M_l \frac{\partial \tilde{\psi}_{n, -}}{\partial \zeta}\bigg|_{\widetilde{\Gamma}}(\xi) - \frac{\partial \tilde{\psi}_{n, +}}{\partial \zeta}\bigg|_{\widetilde{\Gamma}}(\xi) \right) \right)} d \xi  =0.
\end{equation}
Returning to the analysis of other terms in $I$, we calculate
\begin{equation*}
  \begin{split}
    \Delta \tilde{\psi}_n(\xi, \zeta) &= (z^2 - \lambda^2) \tilde{\psi}_n(\xi, \zeta) + \frac{1}{n^{5/2}} \chi''\left( \frac{\xi-x_n}{n} \right) \chi\left( \frac{c \zeta}{n} \right) e^{i \lambda \xi} v(\zeta) \\
    &\quad + \frac{2i \lambda}{n^{3/2}} \chi'\left( \frac{\xi-x_n}{n} \right) \chi\left( \frac{c \zeta}{n} \right) e^{i \lambda \xi} v(\zeta) + \frac{c^2}{n^{5/2}} \chi\left( \frac{\xi-x_n}{n} \right) \chi''\left( \frac{c \zeta}{n} \right) e^{i \lambda \xi} v(\zeta) \\
    & \quad +\frac{2c}{n^{3/2}} \chi\left( \frac{\xi-x_n}{n} \right) \chi'\left( \frac{c \zeta}{n} \right) e^{i \lambda \xi} v'(\zeta).
  \end{split}
\end{equation*}
The Cauchy-Schwarz inequality yields
\begin{equation} \label{equation_singular_sequence3}
  \begin{split}
    \bigg|&(\tilde{u}, -\Delta \tilde{\psi}_n)_{\RR^2\setminus\widetilde{\Gamma}} +\left(z^2-\lambda^2\right) (\tilde{u}, \tilde{\psi}_n)_{\RR^2} \bigg| \\
    &= \bigg|\int_{\mathbb{R}^2} \tilde{u}(\xi, \zeta) \overline{\bigg(\frac{1}{n^{5/2}} \chi''\left( \frac{\xi-x_n}{n} \right) \chi\left( \frac{c \zeta}{n} \right) e^{i \lambda \xi} v(\zeta) + \frac{2i \lambda}{n^{3/2}} \chi'\left( \frac{\xi-x_n}{n} \right) \chi\left( \frac{c \zeta}{n} \right) e^{i \lambda \xi} v(\zeta)} \\
    &\quad \qquad \overline{+ \frac{c^2}{n^{5/2}} \chi\left( \frac{\xi-x_n}{n} \right) \chi''\left( \frac{c \zeta}{n} \right) e^{i \lambda \xi} v(\zeta) +\frac{2c}{n^{3/2}} \chi\left( \frac{\xi-x_n}{n} \right) \chi'\left( \frac{c \zeta}{n} \right) e^{i \lambda \xi} v'(\zeta) \bigg)} d \xi d \zeta \bigg| \\
    &\leq \frac{C}{n} \| u \|_{\mathbb{R}^2}.
  \end{split}
\end{equation}
The absolute value of the expression $I$ in equation \eqref{equation_singular_sequence1} can thus be estimated by equations \eqref{equation_singular_sequence2} and \eqref{equation_singular_sequence3} and we have
\begin{equation} \label{equation_singular_sequence4}
  |I| \leq \frac{C}{n} \| u \|_{\mathbb{R}^2} \leq \frac{C}{n}\left(\|S_\tau u\|^2_{\RR^2}+\|u\|^2_{\RR^2}\right)^{1/2},
\end{equation}
which is claim (iv). The proof of the proposition is therefore complete.
\end{proof}

In analogy to Corollary~\ref{corollary_sigma_ess_pos}, it is possible to combine Corollary~\ref{corollary_sigma_ess} and  Propositions~\ref{p-essspec1} and~\ref{p-essspec2} in order to show the following result: 
\begin{cor} \label{corollary_sigma_ess_neg}
  Let $\tau \in (-\infty, 0) \setminus \{-2\}$ and let $A_{\tau,z}$ be any self-adjoint extension of $S_\tau$ in~\eqref{def_S_tau}. Then 
  \begin{equation*}
    \sigma\ti{ess}(A_{\tau,z}) = \bigg(-\infty, -m\left|\frac{\tau^2-4}{\tau^2+4}\right|\bigg] \cup \bigg[m\left|\frac{\tau^2-4}{\tau^2+4}\right|, \infty\bigg).
  \end{equation*}
\end{cor}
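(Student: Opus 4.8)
The plan is to argue exactly as in the proof of Corollary~\ref{corollary_sigma_ess_pos}, replacing the crude lower bound $m^2$ coming from Theorem~\ref{t-quadraticform} by the sharp value identified in Propositions~\ref{p-essspec1} and~\ref{p-essspec2}. Set $\varepsilon_\tau := m\frac{\tau^2-4}{\tau^2+4}$, so that $|\varepsilon_\tau| = m\big|\frac{\tau^2-4}{\tau^2+4}\big|$. First, Propositions~\ref{p-essspec1} and~\ref{p-essspec2} together give $\sigma\ti{ess}(S_\tau^* S_\tau) = [\varepsilon_\tau^2, \infty)$.

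Next, Proposition~\ref{p-vladnote}, applied with $S = S_\tau$ (whose deficiency indices are $(1,1)$, cf.~Section~\ref{s-wedge}) and $D = A_{\tau,z}$, shows that the resolvent difference $(S_\tau^* S_\tau + 1)^{-1} - ((A_{\tau,z})^2 + 1)^{-1}$ has rank at most one; in particular it is compact, whence $\sigma\ti{ess}((A_{\tau,z})^2) = \sigma\ti{ess}(S_\tau^* S_\tau) = [\varepsilon_\tau^2, \infty)$.

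It remains to pass from $(A_{\tau,z})^2$ to $A_{\tau,z}$ itself. Since $A_{\tau,z}$ is self-adjoint, the spectral theorem yields the standard identity $\{\lambda^2 : \lambda \in \sigma\ti{ess}(A_{\tau,z})\} = \sigma\ti{ess}((A_{\tau,z})^2) = [\varepsilon_\tau^2, \infty)$. In one direction this immediately forces $\sigma\ti{ess}(A_{\tau,z}) \subset (-\infty, -|\varepsilon_\tau|] \cup [|\varepsilon_\tau|, \infty)$. For the reverse inclusion, fix $t \ge \varepsilon_\tau^2$; then at least one of $\pm\sqrt{t}$ lies in $\sigma\ti{ess}(A_{\tau,z})$, and by the symmetry established in Corollary~\ref{corollary_sigma_ess} in fact both do. Hence $(-\infty, -|\varepsilon_\tau|] \cup [|\varepsilon_\tau|, \infty) \subset \sigma\ti{ess}(A_{\tau,z})$, and combining the two inclusions completes the proof.

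There is no genuine obstacle here: the argument is a verbatim adaptation of the positive-strength case once Propositions~\ref{p-essspec1} and~\ref{p-essspec2} supply the correct bottom of the spectrum of $S_\tau^* S_\tau$. The only point deserving a moment of care is that knowing $\lambda^2 \in \sigma\ti{ess}((A_{\tau,z})^2)$ does not by itself determine the sign of $\lambda$; this is precisely why the symmetry result of Corollary~\ref{corollary_sigma_ess} — and thereby, ultimately, the charge-conjugation argument of Theorem~\ref{t-symmetry} — is indispensable at the last step.
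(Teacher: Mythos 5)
Your proposal is correct and follows exactly the route the paper indicates for this corollary: combine Propositions~\ref{p-essspec1} and~\ref{p-essspec2} to get $\sigma\ti{ess}(S_\tau^*S_\tau)=[\varepsilon_\tau^2,\infty)$, transfer this to $(A_{\tau,z})^2$ via the finite-rank resolvent difference of Proposition~\ref{p-vladnote}, and resolve the sign ambiguity with the symmetry of Corollary~\ref{corollary_sigma_ess}, mirroring the proof of Corollary~\ref{corollary_sigma_ess_pos}. No gaps.
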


\section{Discrete spectrum}\label{section_discrete}

In this section we show the statements about the discrete spectrum of any self-adjoint extension of $S_\tau$ defined in~\eqref{def_S_tau} in Theorem~\ref{theorem_intro}. We begin with the simpler case of a positive interaction strength.  

\begin{prop}
	Let $\tau \in (0,\infty) \setminus \{2\}$, $\w \in (0,\frac{\pi}{2})$,  and let $A_{\tau,z}$ be any self-adjoint extension of $S_\tau$ in~\eqref{def_S_tau}. Then $A_{\tau,z}$ has at most one discrete eigenvalue of multiplicity one.   
\end{prop}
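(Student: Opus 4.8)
The plan is to pass to the squared operator $(A_{\tau,z})^2$ and compare it with the auxiliary operator $S_\tau^*S_\tau$, exploiting that the latter has \emph{no} spectrum below $m^2$ while the two operators differ by a rank-one perturbation at the level of resolvents. First I would reduce the statement to a spectral bound for $(A_{\tau,z})^2$. By Corollary~\ref{corollary_sigma_ess_pos} we have $\sigma_\textup{ess}(A_{\tau,z})=(-\infty,-m]\cup[m,\infty)$, so $\sigma_\textup{disc}(A_{\tau,z})\subset(-m,m)$; hence every $\la\in\sigma_\textup{disc}(A_{\tau,z})$ satisfies $\la^2<m^2$ and is an eigenvalue of $(A_{\tau,z})^2$ with $\dim\ker((A_{\tau,z})^2-\la^2)\ge\dim\ker(A_{\tau,z}-\la)$. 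Consequently the closed linear span $V$ of all eigenspaces $\ker(A_{\tau,z}-\la)$ with $\la\in\sigma_\textup{disc}(A_{\tau,z})$ — whose dimension equals the total multiplicity of $\sigma_\textup{disc}(A_{\tau,z})$, since the eigenspaces for distinct $\la$ are mutually orthogonal — is contained in $\ran E_{(A_{\tau,z})^2}\big((-\infty,m^2)\big)$. It therefore suffices to prove $\rk E_{(A_{\tau,z})^2}\big((-\infty,m^2)\big)\le 1$.

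Next I would record the two inputs that make the comparison work. For $u\in\dom S_\tau$, Theorem~\ref{t-quadraticform} together with $\tau>0$ gives $\|S_\tau u\|_{\RR^2}^2=\|\grad u\|_{\RR^2\setminus\Gamma}^2+m^2\|u\|_{\RR^2}^2+\tfrac{2m}{\tau}\|u_+|_\Gamma-u_-|_\Gamma\|_\Gamma^2\ge m^2\|u\|_{\RR^2}^2$; since $\dom S_\tau$ is the form domain of $S_\tau^*S_\tau$ this yields $\sigma(S_\tau^*S_\tau)\subset[m^2,\infty)$, so $R:=(S_\tau^*S_\tau+1)^{-1}$ is a nonnegative bounded operator with $\|R\|\le(m^2+1)^{-1}$. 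Moreover, by Proposition~\ref{p-vladnote} applied with $S=S_\tau$ (which has deficiency indices $(1,1)$) and $D=A_{\tau,z}$, the operator $W:=R-((A_{\tau,z})^2+1)^{-1}$ is self-adjoint of rank at most $1$.

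To finish, I would invoke the elementary min-max fact that for bounded self-adjoint operators $R$ and $R+W$ with $\rk W\le 1$ one has $\la_{k+1}(R+W)\le\la_k(R)$ for every $k\ge 1$, where $\la_j(\cdot)$ denotes the $j$-th point of the spectrum counted from the top (with the usual convention once the discrete eigenvalues above $\sup\sigma_\textup{ess}$ are exhausted); this follows by enlarging an almost-optimizing subspace for $\la_k(R)$ by the one-dimensional range of $W$. Applying it with $k=1$ and using $\la_1(R)=\|R\|\le(m^2+1)^{-1}$ gives $\la_2\big(((A_{\tau,z})^2+1)^{-1}\big)\le(m^2+1)^{-1}$, so the spectral projection of $((A_{\tau,z})^2+1)^{-1}$ onto $\big((m^2+1)^{-1},\infty\big)$ has rank at most $1$. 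Under the decreasing spectral mapping $t\mapsto(t+1)^{-1}$ this projection is exactly $E_{(A_{\tau,z})^2}\big((-\infty,m^2)\big)$ (note also $\sigma_\textup{ess}((A_{\tau,z})^2)=\sigma_\textup{ess}(S_\tau^*S_\tau)\subset[m^2,\infty)$, so all of its spectrum below $m^2$ is genuinely discrete), hence $\rk E_{(A_{\tau,z})^2}\big((-\infty,m^2)\big)\le 1$. Combined with the reduction of the first paragraph, $\dim V\le 1$, which precisely says that $A_{\tau,z}$ has at most one discrete eigenvalue and that it is simple.

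I expect the only genuinely non-routine point to be the abstract perturbation step: one must fix the correct form of the rank-one eigenvalue-counting inequality and then keep the multiplicity bookkeeping straight. The inclusion $V\subset\ran E_{(A_{\tau,z})^2}\big((-\infty,m^2)\big)$ is what makes this clean, since a single \emph{simple} eigenvalue of $(A_{\tau,z})^2$ below $m^2$ simultaneously rules out two distinct eigenvalues of $A_{\tau,z}$ in $(-m,m)$ and one eigenvalue there of multiplicity two — both would force $\dim V\ge 2$ — so no separate treatment of the symmetric pair $\pm\la$ or of a possible eigenvalue at $0$ is needed. Everything else reduces to direct citations of Theorem~\ref{t-quadraticform}, Proposition~\ref{p-vladnote}, and Corollary~\ref{corollary_sigma_ess_pos}.
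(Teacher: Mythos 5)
Your argument is correct and follows essentially the same route as the paper: bound $S_\tau^*S_\tau\ge m^2$ via Theorem~\ref{t-quadraticform}, use Proposition~\ref{p-vladnote} to see that the resolvents of $S_\tau^*S_\tau$ and $(A_{\tau,z})^2$ differ by a rank-one operator, and conclude that $(A_{\tau,z})^2$ has at most one eigenvalue (with multiplicity) below $m^2$. The only difference is that the paper cites an abstract perturbation theorem of Birman--Solomjak for the last step, whereas you prove the needed rank-one interlacing inequality directly by min-max; your multiplicity bookkeeping via the subspace $V$ is also sound.
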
 

\begin{proof}
	First, Proposition~\ref{p-vladnote}, Corollary~\ref{corollary_sigma_ess_pos}, and Theorem~\ref{t-quadraticform} imply that $\sigma(S_\tau^*S_\tau) = [m^2,\infty)$. By Corollary~\ref{corollary_sigma_ess_pos} one has $\sigma_{\rm ess}(A_{\tau,z}) = (-\infty,-m]\cup [m,\infty)$.  Recall that by Theorem~\ref{t-nbreakdown} the deficiency indices of $S_\tau$ are $(1,1)$. Hence, by Proposition~\ref{p-vladnote}
	the resolvent difference
	$(S_\tau^*S_\tau+1)^{-1} - \left[(A_{\tau,z})^2+1\right]^{-1}$ is a rank-one operator. The perturbation result~\cite[\S 9.3, Theorem 3]{BS87} implies that the dimension of the spectral subspace of the self-adjoint operator $(A_{\tau,z})^2$ corresponding to the interval $(0,m^2)$ is at most one. As the spectral subspace for $(A_{\tau,z})^2$ corresponding to the interval $(0,m^2)$ coincides with the spectral subspace for $A_{\tau,z}$ corresponding to the interval $(-m,m)$, the result follows.
\end{proof}

\begin{rem}
	As a consequence of the above proposition and of the symmetry of the spectrum of the distinguished self-adjoint extension $A_{\tau,1}$
	stated in Theorem~\ref{t-symmetry},
	we infer that for any $\tau\in(0,\infty)\setminus\{2\}$ the discrete spectrum of $A_{\tau,1}$ in the gap $(-m,m)$ can either be  empty or consist of one simple eigenvalue at $0$.
\end{rem}

The case where $\tau<0$ is more complicated. The proof of the existence of discrete eigenvalues is based on the min-max principle and on the construction of a family of test functions. This is a modification of the approach of~\cite{EN03}, applied there to Schr\"odinger operators with $\delta$-interactions supported on broken lines.
We remark that the angle $\omega_\star$ that is implicitly given in the proposition below is computed explicitly after the proof. Since the explicit value is rather complicated, we prefer to state the implicit value here.

\begin{prop}\label{prop:disc2}
	Let $\tau \in (-\infty,0) \setminus \{-2\}$ and $N\in\N$. Define the number $\omega_\star \in (0, \frac{\pi}{2})$ by 
	\begin{equation*}
	  \omega_\star = \omega_\star(\tau, N) := \max_{L \in (0, \infty)} \arctan \left\{ \frac{N^2 \pi^2 (4+\tau^2)^2 (16 \tau^2 + (4+\tau^2)^2) - 8 m^2 L^2 \tau^2(4-\tau^2)^2}{(2 N^2 \pi^2 + m^2 L^2) [(4-\tau^2)^2 + (4+\tau^2)^2] 4 m \tau L (4+\tau^2)} \right\}.
	\end{equation*}
	If $\omega \in (0,\omega_\star]$, then any self-adjoint extension $A_{\tau,z}$ of $S_\tau$ has at least $N$ discrete eigenvalues with multiplicities taken into account.
\end{prop}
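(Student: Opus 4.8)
The plan is to pass to the non-negative operator $(A_{\tau,z})^2$ and apply the min-max principle to its quadratic form. Put $\varepsilon_\tau:=m\frac{\tau^2-4}{\tau^2+4}$, so that by Corollary~\ref{corollary_sigma_ess_neg} together with the spectral mapping theorem one has $\inf\sigma_{\rm ess}\big((A_{\tau,z})^2\big)=\varepsilon_\tau^2$, and every eigenvalue $\mu\in[0,\varepsilon_\tau^2)$ of $(A_{\tau,z})^2$ corresponds to eigenvalues $\pm\sqrt{\mu}$ of $A_{\tau,z}$ lying in the gap $(-|\varepsilon_\tau|,|\varepsilon_\tau|)$ with the same total multiplicity; since this gap contains no essential spectrum, these are discrete eigenvalues. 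As $A_{\tau,z}$ extends $S_\tau$, the form domain $\dom A_{\tau,z}$ of $(A_{\tau,z})^2$ contains $\dom S_\tau$ and the form is given by $u\mapsto\|S_\tau u\|_{\RR^2}^2$ there. Hence, by the min-max principle, it suffices to produce an $N$-dimensional subspace $\cL\subset\dom S_\tau$ with $\|S_\tau u\|_{\RR^2}^2<\varepsilon_\tau^2\|u\|_{\RR^2}^2$ for all $u\in\cL\setminus\{0\}$; then $\mu_N\big((A_{\tau,z})^2\big)<\varepsilon_\tau^2=\inf\sigma_{\rm ess}\big((A_{\tau,z})^2\big)$, so $(A_{\tau,z})^2$ has at least $N$ eigenvalues below $\varepsilon_\tau^2$, and the proposition follows. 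By Theorem~\ref{t-quadraticform}, with $c_\tau:=m^2-\varepsilon_\tau^2=\frac{16m^2\tau^2}{(\tau^2+4)^2}>0$, the target inequality is
\begin{equation*}
  \|\grad u\|_{\RR^2\setminus\Gamma}^2+\frac{2m}{\tau}\big\|u_+|_\Gamma-u_-|_\Gamma\big\|_\Gamma^2<c_\tau\|u\|_{\RR^2}^2 ,\qquad u\in\cL\setminus\{0\},
\end{equation*}
and the boundary term on the left is \emph{negative} because $\tau<0$ — this is the mechanism that makes bound states possible.

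\textbf{The transverse profile.} The building block is the transverse mode $v$ from the proof of Proposition~\ref{p-essspec2}. In coordinates $(\xi,\zeta)$ adapted to an arm of $\Gamma$ (arc length $\xi>0$, signed transverse distance $\zeta$) one sets $v(\zeta)=a e^{-z\zeta}$ for $\zeta>0$ and $v(\zeta)=M_l a e^{z\zeta}$ for $\zeta<0$, with $z=\sqrt{c_\tau}=\frac{4m|\tau|}{\tau^2+4}$ and $0\neq a\in\CC^2$. This $v$ satisfies the transmission condition across the arm and is the ground mode of the associated one-dimensional problem; in particular
\begin{equation*}
  \int_{\RR\setminus\{0\}}|v'|^2+m^2\int_{\RR}|v|^2+\frac{2m}{\tau}\big|v(0^+)-v(0^-)\big|^2=\varepsilon_\tau^2\int_{\RR}|v|^2 .
\end{equation*}
Thus $v$ exactly saturates the threshold $\varepsilon_\tau^2$, so any strict gain must come from the geometry of the corner.

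\textbf{The test functions and the role of a small opening angle.} Following a modification of the construction of~\cite{EN03}, I would take $u_1,\dots,u_N$ supported in a fixed-width tubular neighbourhood of $\Gamma$, of the form $u_k(\xi,\zeta)=f_k(\xi)\,v(\zeta)$ in the arm coordinates away from the vertex, where the $f_k$ are mutually orthogonal longitudinal profiles supported on an interval of length $L$ with $\|f_k'\|^2\le(N\pi/L)^2\|f_k\|^2$, the length $L>0$ being a free parameter; near the vertex the $u_k$ are defined so that the transmission condition $u_-|_\Gamma=Mu_+|_\Gamma$ holds on both $\Gamma_l$ and $\Gamma_r$, which forces the contributions of the two arm-neighbourhoods to be superposed there. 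Inserting $u=\sum_k c_k u_k$ into the identity of Theorem~\ref{t-quadraticform}, the part away from the vertex decouples into a longitudinal kinetic term dominated by $(N\pi/L)^2\|u\|_{\RR^2}^2$ plus a transverse part equal to exactly $\varepsilon_\tau^2$ times the corresponding mass. Near the vertex there is an extra term $R_\omega(u)$ coming from the region in the narrow wedge $\Omega_+$ — of longitudinal extent growing like $1/\omega$ as $\omega\to0^+$ — in which the two arm-contributions overlap and reinforce: there the local mass grows faster than the local energy, so $R_\omega(u)$ pushes the Rayleigh quotient below $\varepsilon_\tau^2$, the more so the smaller $\omega$ is. Collecting all contributions, and using the orthogonality of the $f_k$ (resp.\ of the $f_k'$) to control the cross terms, one obtains $\|S_\tau u\|_{\RR^2}^2<\varepsilon_\tau^2\|u\|_{\RR^2}^2$ for every nonzero $u\in\cL:=\spa\{u_1,\dots,u_N\}$ precisely when $\tan\omega$ does not exceed the quotient displayed in the statement for the chosen value of $L$; optimising over $L$ gives exactly the condition $\omega\le\omega_\star(\tau,N)$.

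\textbf{Main obstacle.} The difficulty is concentrated entirely at the vertex. There the arm-adapted coordinates degenerate, the tubular neighbourhoods of $\Gamma_l$ and $\Gamma_r$ overlap, and one must simultaneously (i) define the $u_k$ so that they genuinely lie in $\dom S_\tau$ — the interface relation must be verified on both arms, also arbitrarily close to the corner — and (ii) compute the corner contribution $R_\omega$, together with the accompanying increase of $\|u\|_{\RR^2}^2$, sharply enough to extract a usable negative shift of the Rayleigh quotient with the correct $\omega$-dependence, while bounding all remaining errors (cross terms between different $u_k$, cut-off commutators, and the discrepancy between the curvilinear and Cartesian descriptions) by quantities that become harmless after the optimisation in $L$. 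Obtaining the explicit critical angle $\omega_\star$, rather than merely "some sufficiently small $\omega$", is where the bulk of the computation — and the adaptation of the variational argument of~\cite{EN03} to the Dirac operator — is located.
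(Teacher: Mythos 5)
Your high-level strategy coincides with the paper's: pass to $(A_{\tau,z})^2$, apply the min-max principle on an $N$-dimensional subspace of $\dom S_\tau$ using the form identity of Theorem~\ref{t-quadraticform}, exploit the negativity of $\frac{2m}{\tau}\|u_+|_\Gamma-u_-|_\Gamma\|^2_\Gamma$ for $\tau<0$, use the transverse mode $v$ with decay rate $z=-\frac{4m\tau}{4+\tau^2}$ (your threshold identity for $v$ is correct), and optimise over $L$. The gap is in the construction of the test functions, exactly where you place your ``main obstacle''. Superposing single-arm modes $f_k(\xi)v(\zeta)$ in arm-adapted coordinates cannot work as stated: near the vertex, where the tube around $\Gamma_r$ meets $\Gamma_l$, the arm-$r$ contribution is \emph{continuous} across $\Gamma_l$, so writing $u=u^{(l)}+u^{(r)}$ with $u^{(r)}_-|_{\Gamma_l}=u^{(r)}_+|_{\Gamma_l}=:w$ gives $u_-|_{\Gamma_l}-M_lu_+|_{\Gamma_l}=(\sigma_0-M_l)w$, and since $\sigma_0-M_l$ is invertible for $\tau\notin\{0,\pm2\}$ (its eigenvalues are $-\frac{2\tau}{2-\tau}$ and $\frac{2\tau}{2+\tau}$), the superposition leaves $\dom S_\tau$ wherever $w\neq0$. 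Cutting $u^{(r)}$ off before it reaches $\Gamma_l$ destroys precisely the near-vertex ``reinforcement'' you invoke as the binding mechanism, and the quantitative corner estimate $R_\omega$ and the derivation of the explicit quotient defining $\omega_\star$ are asserted rather than carried out. So the proposal is a plausible programme, not a proof.

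The paper resolves this with a different and simpler test function that you should compare with: $u_n(x,y)=f_n(x)\,g(y)\,h(x,y)$ supported in the Cartesian strip $[L,2L]\times\RR$, hence bounded away from the vertex. Here $f_n(x)=\sin(\frac{2n\pi}{L}x)\chi_{[L,2L]}(x)$, the scalar profile $g$ equals $1$ on the whole region $|y|\le 2L\tan\omega$ (which contains the wedge and both arms inside the strip) and decays like $e^{-\gamma(|y|-2L\tan\omega)}$ with $\gamma=-\frac{4m\tau}{4+\tau^2}$ only outside it, and $h$ is piecewise constant ($(1,0)^T$ in $\Omega_+$, $(a,\pm be^{\pm i\omega})^T$ in $\Omega_-^{l/r}$), which enforces $u_-|_\Gamma=Mu_+|_\Gamma$ exactly on each arm. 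There is no gluing and no overlap of arm contributions, every term in $\|S_\tau u\|^2_{\RR^2}-\varepsilon_\tau^2\|u\|^2_{\RR^2}$ is computed in closed form, and the algebraic identity $\frac{L\kappa(\tau)}{2\gamma}\bigl[\frac{16m^2\tau^2}{(\tau^2+4)^2}+\gamma^2\bigr]+\frac{2mLc(\tau)}{\tau}=\frac{4mL\tau}{4+\tau^2}<0$ cancels all order-$L$ transverse costs against the boundary term; the surviving positive terms (longitudinal kinetic energy of order $N^2\pi^2/L$ and the $O(L^2\tan\omega)$ mass in the wedge) yield exactly the condition $\omega\le\omega(L)$ and, after maximising over $L$, the stated $\omega_\star$. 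The binding thus comes from the two arms being uniformly close over the whole strip when $\omega$ is small, not from a vertex overlap. To repair your argument along your own lines, the essential fix is to replace the superposed single-arm modes by a transverse profile that is constant across the entire wedge and both arms.
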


\begin{proof}
Our strategy is to apply the min-max principle to the non-negative operator $(A_{\tau,z})^2$ using a family of test functions lying in the domain of the symmetric operator $S_\tau$. The test functions will be supported in the strip $\Pi_L := [L, 2L]\times\RR$  for some $L>0$. We will also use the notation $d:=L\tan(\w)$. The intersection $\Pi_L\cap\Omega_-$
is the union of two disjoint domains: the subdomain of $\Pi_L\cap\Omega_-$ which lies above $\Gamma_l$ will be denoted  by $\Omega^l_-$ and the subdomain of $\Pi_L\cap\Omega_-$ below $\Gamma_r$ will be denoted by $\Omega^r_-$; see Figure~\ref{f-testfunction}.

\begin{figure}[h]
\centering
\includegraphics[scale=1]{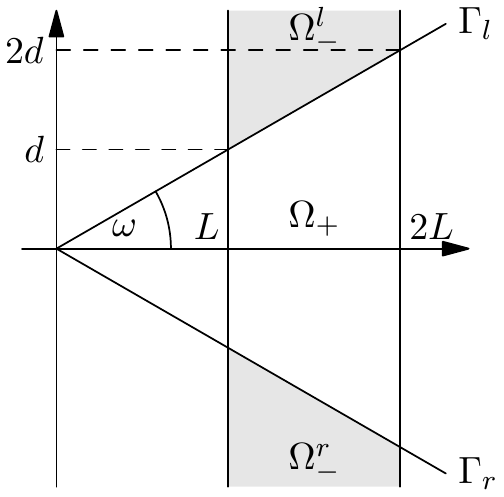}
\caption{Decomposition of $\RR^2$ used for the test functions $u_n(x,y)$. The subdomains $\Omega_-^l$ and $\Omega_-^r$ are depicted in light gray.}
\label{f-testfunction}
\end{figure}

Recall that by Corollary \ref{corollary_sigma_ess_neg} the essential spectra of $(A_{\tau,z})^2$ and of $S_\tau^*S_\tau$ are identified with 
\begin{align}\label{e-lowerbound}
    \sigma_{\rm ess}((A_{\tau,z})^2) = \sigma_{\rm ess}(S_\tau^*S_\tau) = \big[\eps_\tau^2,\infty\big),\qquad
    \text{where}\,\,\, \eps_\tau = m\frac{\tau^2-4}{\tau^2+4}.
\end{align}
Fix $N\in\NN$ and define the following abbreviations:
\begin{align*}
    a:=\frac{4+\tau^2}{4-\tau^2} \hspace{2em}\text{ and }\hspace{2em} b:=\frac{4\tau}{4-\tau^2}.
\end{align*}
We introduce a family of test functions $u_n\colon\mathbb{R}^2\rightarrow\mathbb{C}^2$ of the form $u_n(x,y):=f_n(x) g(y) h(x,y)$, $n\in\{1,2,\dots,N\}$. Here, the functions $f_n\colon \mathbb{R}\rightarrow\mathbb{R}$, $n\in\{1,2,\dots,N\}$, are defined by
\[
    f_n(x)=\sin\left(\frac{2n\pi}{L}x\right)\cdot\chi\ci{[L,2L]}(x), 
\] 
where $\chi_{[L,2L]}$ is the characteristic function of the interval $[L,2L]$.
Furthermore, the function $g :\mathbb{R}\rightarrow\mathbb{R}$ is introduced by    
\[
	g(y)=
    \begin{cases}
    1, & |y|\leq2d, \\
    e^{-\gamma(|y|-2d)}, & |y|>2d,
    \end{cases} 
\]
with $\gamma=-\frac{4m\tau}{4+\tau^2}$.
Finally, the function $h : \mathbb{R}^2\rightarrow\mathbb{C}^2$ is given by
\[
    h(x,y)=
    \begin{cases}
    \begin{pmatrix}
    1 \\
    0
    \end{pmatrix}, & (x,y)\in\Omega_+, \\
    \begin{pmatrix}
    a \\
    be^{i\w}
    \end{pmatrix}, & (x,y)\in\Omega^l_-, \\
    \begin{pmatrix}
    a \\
    -be^{-i\w}
    \end{pmatrix}, & (x,y)\in\Omega^r_-.
    \end{cases}
\]
The vector-valued function $h$ ensures that the transmission condition for $\dom S_\tau$ is satisfied, and so by construction $\{u_n\}_{n=1}^N\subset\dom S_\tau$. Since the functions $f_n$ are linearly independent, also the functions $\{u_n\}_{n=1}^N$ are linearly independent and the subspace $\mathcal{L}_N := {\rm span}\,\{u_1,u_2,\dots,u_N\}$ of $\dom S_\tau$ has dimension $N$. A generic element $u\in\mathcal{L}_N$ is therefore given by $u(x,y)=\sum_{n=1}^N c_nu_n(x,y)$ with some complex coefficients $\{c_n\}_{n=1}^N$. We are going to show that, for $\omega \in (0,\omega_\star]$ and the corresponding $L$ that maximizes the expression in the definition of $\omega_\star$, one has $\| S_\tau u \|_{\mathbb{R}^2}^2 < \varepsilon_\tau^2 \| u \|_{\mathbb{R}^2}^2$ for all $u \in \mathcal{L}_N$.

We will compute all the terms in $\|S_\tau u\|^2_{\R^2}$ from Theorem~\ref{t-quadraticform} for $u\in\mathcal{L}_N$. Begin by noticing that 
\begin{align*}
    \int_{\Gamma_l}\big|u_+|_{\Gamma_l}-u_-|_{\Gamma_l}\big|^2ds=\int_{\Gamma_r}\big|u_+|_{\Gamma_r}-u_-|_{\Gamma_r}\big|^2ds.
\end{align*}
Recall that the edge $\Gamma_l$ is parametrized by $\{(s\cos(\w),s\sin(\w) ):s\in\mathbb{R}_+\}$. We calculate that 
\begin{align*}
    c(\tau):=\Bigg|\begin{pmatrix} a-1 \\ be^{i\w}\end{pmatrix}\Bigg|^2=(a-1)^2+b^2=\frac{4\tau^2(\tau^2+4)}{(\tau^2-4)^2}.
\end{align*}
Hence, we find that
\begin{align*}
    \int_{\Gamma_l}\big|u_+|_{\Gamma_l}-u_-|_{\Gamma_l}\big|^2ds&=\int\ci{L/\cos(\w)}^{2L/\cos(\w)}c(\tau)\Bigg|\sum_{n=1}^Nc_n\sin\left(\frac{2ns\pi}{L}\cos(\w)\right)\Bigg|^2 ds.
\end{align*}
The change of variables $t=(s\pi\cos(\w))/L$ and the orthogonality of the functions $\sin(2 n t)$, $n \in \{ 1, \dots, N\}$, in $L^2(\pi, 2 \pi)$ then yields
\begin{align*}
 \int_{\Gamma_l}\big|u_+|_{\Gamma_l}-u_-|_{\Gamma_l}\big|^2ds&=c(\tau)\frac{L}{\pi\cos(\w)}\int_{\pi}^{2\pi}\Big|\sum_{n=1}^N c_n\sin(2nt)\Big|^2 dt \\
 &=c(\tau)\frac{L}{\pi\cos(\w)}\sum_{n=1}^N\int_{\pi}^{2\pi}|c_n|^2\cdot|\sin(2nt)|^2 dt \\
 &=c(\tau)\frac{L}{\pi\cos(\w)}\sum_{n=1}^N\frac{\pi}{2}|c_n|^2=\frac{c(\tau)L}{2\cos(\w)}\sum_{n=1}^N|c_n|^2.
\end{align*}
This immediately implies that
\begin{align}\label{eq:uGamma}
     \big\|u_+|_\Gamma-u_-|_\Gamma \big\|^2_\Gamma=2\int_{\Gamma_l}\big|u_+|_{\Gamma_l}-u_-|_{\Gamma_l}\big|^2ds=\frac{c(\tau)L}{\cos(\w)}\sum_{n=1}^N|c_n|^2.
\end{align}

For the calculation of $\|u\|^2_{\RR^2}$, we define
\begin{align*}
    \kappa(\tau)=a^2+b^2=\frac{(4+\tau^2)^2+16\tau^2}{(4-\tau^2)^2},
\end{align*}
and notice that
due to the symmetry
of $|u|$ with respect to the $x$-axis one has
\[
	\|u\|^2_{\R^2} = 2\int_{\Pi_L^+}|u|^2dxdy,
\]
where $\Pi_L^+:=[L, 2L]\times[0,\infty)$. We define three disjoint subregions of $\Pi_L^+$ by 
\[
\begin{aligned}
	\Pi_{L,1}^+ &:= \Pi_L^+\cap\Omega_+,\\ 
	\Pi_{L,2}^+& :=\Pi_L^+\cap\{(x,y)\in\Omega^l_-\colon y\le 2d\},\\
	\Pi_{L,3}^+& := \Pi_L^+\cap\{(x,y)\in\Omega^l_-\colon y>2d\}.
\end{aligned}
\] 
Note that $\overline{\Pi_L^+} = \overline{\cup_{j=1}^3\Pi_{L,j}^+}$.
Using the symmetry of $|f_n(x)|$ about $x=3L/2$ we obtain that
\begin{align*}
    \|u\|^2_{\RR^2}&=
    2\int_{\Pi_{L,1}^+}|u|^2dxdy
    +
    2\int_{\Pi_{L,2}^+}|u|^2dxdy   +2\int_{\Pi_{L,3}^+}|u|^2dxdy
    \\
    &=
    2\int_L^{2L}\Big|\sum_{n=1}^N c_n\sin(\frac{2n\pi}{L}x)\Big|^2dx\left\{\frac{3L\tan(\w)}{2}+\frac{L\tan(\w)}{2}\kappa(\tau)+\int_{2d}^{\infty}\kappa(\tau)e^{-2\gamma(y-2d)}dy\right\} \\
    &=2\sum_{n=1}^N|c_n|^2\int_L^{2L}\sin^2\left(\frac{2n\pi}{L}x\right)dx\left\{\frac{L\tan(\w)[3+\kappa(\tau)]}{2}+\frac{\kappa(\tau)}{2\gamma}\right\} \\
    &=\left\{\frac{L^2\tan(\w)[3+\kappa(\tau)]}{2}+\frac{L\kappa(\tau)}{2\gamma}\right\}\sum_{n=1}^N|c_n|^2.
\end{align*}
Finally, we calculate $\|\grad u\|^2_{\RR^2\setminus\Gamma}$
for a generic $u\in\cL_N$. A direct computation yields
\begin{align*}
    \partial_x u(x,y)&=
    \sum_{n=1}^N c_n\frac{2n\pi}{L}\cos\left(\frac{2n\pi}{L}x\right) \chi\ci{[L,2L]}(x)g(y)h(x,y), \\
    \partial_y u(x,y) &=
    -\sum_{n=1}^N c_n\sin\left(\frac{2n \pi}{L}x\right)\chi_{[L, 2L]}(x) \chi\ci{(-\infty,-2d]\cup[2d,\infty)}(y) \sgn(y)\gamma e^{-\gamma(|y|-2d)} h(x,y),
\end{align*}
where it was used in between that $h(x,y)$ is piecewise constant. A calculation similar to that of $\|u\|^2_{\RR^2}$ above then yields 
\begin{align*}
    \|\partial_x u\|^2_{\RR^2\setminus\Gamma}&=2\sum_{n=1}^N |c_n|^2 \left(\frac{2n\pi}{L}\right)^2\int_L^{2L}\cos^2\left(\frac{2n\pi}{L}x\right)dx\left\{\frac{L\tan(\w)[3+\kappa(\tau)]}{2}+\frac{\kappa(\tau)}{2\gamma}\right\} \\
    &=\sum_{n=1}^N |c_n|^2 \frac{(2n\pi)^2}{L}\left\{\frac{L\tan(\w)[3+\kappa(\tau)]}{2}+\frac{\kappa(\tau)}{2\gamma}\right\}.
\end{align*}
Furthermore, we obtain that
\begin{align*}
    \|\partial_y u\|^2_{\RR^2\setminus\Gamma}&=\sum_{n=1}^N|c_n|^2\int_L^{2L}\sin^2\left(\frac{2n\pi}{L}x\right)dx\cdot 2\int_{2d}^{\infty}\gamma^2\kappa(\tau)e^{-2\gamma(y-2d)}dy \\
    &=\sum_{n=1}^N|c_n|^2\cdot \frac{L}{2}\cdot\gamma\kappa(\tau).
\end{align*}
After subtracting the bottom of the essential spectrum, we finally can calculate that
\begin{align*}
    \|S_\tau u\|^2_{\RR^2}-\eps_\tau^2\|u\|^2_{\RR^2}&=\|\grad u\|^2_{\RR^2\setminus\Gamma}+m^2\left[1-\left(\frac{\tau^2-4}{\tau^2+4}\right)^2\right]\|u\|^2_{\RR^2}+\frac{2m}{\tau}\big\|u_+|_\Gamma-u_-|_\Gamma \big\|^2_{\Gamma} \\
    &=\sum_{n=1}^N|c_n|^2\Bigg\{\frac{(2n\pi)^2}{L}\left\{\frac{L\tan(\w)[3+\kappa(\tau)]}{2}+\frac{\kappa(\tau)}{2\gamma}\right\}+\frac{L\gamma\kappa(\tau)}{2} \\
    &\qquad+\frac{16m^2\tau^2}{(\tau^2+4)^2}\left\{\frac{L^2\tan(\w)[3+\kappa(\tau)]}{2}+\frac{L\kappa(\tau)}{2\gamma}\right\}+\frac{2mLc(\tau)}{\tau\cos(\w)}\Bigg\}.
\end{align*}
Next, by using that for $\omega < \frac{\pi}{2}$ one has $\cos \omega < 1$, $\frac{8 \tau^2}{(\tau^2+4)^2} \leq 1$, and
\begin{align*}
    \frac{L\kappa(\tau)}{2\gamma}&\left[\frac{16m^2\tau^2}{(\tau^2+4)^2}+\gamma^2\right]+\frac{2mLc(\tau)}{\tau}=\frac{L\kappa(\tau)}{2\gamma}\cdot 2\gamma^2+\frac{2mLc(\tau)}{\tau} \\
    &=\frac{4mL\tau}{(4-\tau^2)^2(4+\tau^2)}\left[-16\tau^2+(4+\tau^2)^2\right]=\frac{4mL\tau}{(4-\tau^2)^2(4+\tau^2)}(4-\tau^2)^2 = \frac{4mL\tau}{4+\tau^2},
\end{align*}
we can estimate for $u\in\cL_N\setminus\{0\}$
\begin{align} \label{equation_estimate_form}
    \|S_\tau u\|^2_{\RR^2}-\eps_\tau^2\|u\|^2_{\RR^2}
    <\sum_{n=1}^N|c_n|^2 \left\{ \tan(\w)[3+\kappa(\tau)] ( 2 N^2 \pi^2 + m^2 L^2 ) + \frac{4 m L \tau}{4 + \tau^2} + \frac{2 N^2\pi^2 \kappa(\tau)}{L \gamma} \right\}.
\end{align}
Hence, if we choose for $L>0$
\begin{equation*}
  \begin{split}
    \omega &= \omega(L) := \arctan \left\{ \frac{-4 m L^2 \tau \gamma - 2 N^2 \pi^2 \kappa(\tau) (4+\tau^2)}{(2 N^2 \pi^2 + m^2 L^2) [3 + \kappa(\tau)] L \gamma (4+\tau^2)} \right\} \\
    &= \arctan \left\{ \frac{N^2 \pi^2 (4+\tau^2)^2 (16 \tau^2 + (4+\tau^2)^2) - 8 m^2 L^2 \tau^2(4-\tau^2)^2}{(2 N^2 \pi^2 + m^2 L^2) [(4-\tau^2)^2 + (4+\tau^2)^2] 4 m \tau L (4+\tau^2)} \right\},
  \end{split}
\end{equation*}
then the right hand side in~\eqref{equation_estimate_form} is zero, implying that $\|S_\tau u\|^2_{\RR^2}-\eps_\tau^2\|u\|^2_{\RR^2}<0$ for all $u \in \mathcal{L}_N\setminus\{0\}$. Note that the function $\omega(L)$ has a unique positive maximum for $L>0$ and hence, this claim remains true for the maximum $\omega_\star := \max_{L>0} \omega(L)$.

Let $A_{\tau,z}$ be any self-adjoint extension of $S_\tau$. The inclusion $\cL_N \subset\dom A_{\tau,z}$ then holds, as $\dom S_\tau \subset \dom A_{\tau,z}$. Let $L_\star >0$ denote the argument for which the maximum for $\omega_\star$ is attained. Then, for any $u\in \cL_N\setminus\{0\}$ and $\w\in(0,\w_\star]$, we get
\[
	\|A_{\tau,z} u\|^2_{\RR^2} - \eps_\tau^2\|u\|_{\RR^2}^2 = 
	\|S_\tau u\|^2_{\RR^2} - \eps_\tau^2\|u\|_{\RR^2}^2 < 0. 
\]
The characterization of the essential spectrum in~\eqref{e-lowerbound} and the min-max principle then imply that the self-adjoint operator $(A_{\tau,z})^2$ has at least $N$ discrete eigenvalues, with multiplicities taken into account. Thus, we conclude by Corollary~\ref{corollary_sigma_ess_neg} that $A_{\tau,z}$ has the same lower bound on the number of discrete eigenvalues with multiplicities taken into account. 
\end{proof}

The maximum in Proposition~\ref{prop:disc2} can be found explicitly. Let us introduce the functions 
\[
\begin{aligned}
F(\tau) &:=  N^2\pi^2(4+\tau^2)^2(16\tau^2+(4+\tau^2)^2),\qquad
G(\tau) := [(4-\tau^2)^2+(4+\tau^2)^2]4|\tau|(4+\tau^2),\\
H(\tau) &:= 8\tau^2(4-\tau^2)^2.
\end{aligned}\]
Clearly, the maximum is attained for $L>0$ satisfying the condition $m^2 L^2H(\tau) > F(\tau)$. Let us introduce a new variable $x > 0$ through the relation $x = m^2L^2H(\tau) - F(\tau)$. 
Then we have
\[
	\omega_\star = \max_{x\in(0,\infty)}\arctan\left\{\frac{x H(\tau)^{3/2}}{G(\tau)\big(2N^2\pi^2 H(\tau) +F(\tau)+x\big)\sqrt{F(\tau)+x}}\right\}.
\]
Thus, we need to find the point $x >0$ that maximizes the function
\[
	J(x) := \frac{x}{\big(2N^2\pi^2H(\tau) +F(\tau)+x\big)\sqrt{F(\tau)+x}}.
\]
Differentiating with respect to $x$ we find
\[
\begin{aligned}
J'(x) = 
\frac{-x^2 +x\big[2N^2\pi^2H(\tau)+F(\tau)\big] +2F(\tau)\big[2N^2
\pi^2H(\tau)+F(\tau)\big]}{2\big[2N^2\pi^2H(\tau)+F(\tau)+x\big]^2(F(\tau)+x)^{3/2}}.
\end{aligned}
\]
The maximum of $J(x)$ is located at the unique positive solution of the equation $J'(x) = 0$, which is given by
\[
	x_\star(\tau) = 
	N^2\pi^2H(\tau)+ \frac{F(\tau)}{2}+\sqrt{\left(N^2\pi^2H(\tau)+\frac{F(\tau)}{2}\right)^2+4F(\tau)\left(N^2\pi^2H(\tau)+\frac{F(\tau)}{2}\right)}.
\] 
Hence, we arrive at
\[	
	\omega_\star = \arctan\left\{
	\frac{x_\star(\tau)H(\tau)^{3/2}}{G(\tau)\big(2N^2\pi^2H(\tau)+F(\tau)+x_\star(\tau)\big)\sqrt{F(\tau)+
			x_\star(\tau)}}
	\right\}.
\]
From this formula, one gets that $\omega_\star\rightarrow 0$ in the limits $\tau\rightarrow 0^-$, $\tau\rightarrow -2$, and $\tau\rightarrow-\infty$ for any fixed $N\in\NN$.

\begin{appendix}
\section{On a Sobolev space with a piecewise constant weight}

Recall that $M$ is defined by~\eqref{def_M} and that $\partial_t$ is given by~\eqref{def_partial_t}. In this appendix we prove for any $u \in \dom S_\tau$ the identity
\begin{equation}\label{equation_commutation}
\begin{aligned}
    \big\langle M u_+|_\Gamma,&i\sigma_3(\partial_t (Mu_+|_{\Gamma})\big\rangle_{H^{1/2}(\Gamma; \mathbb{C}^2)\times H^{-1/2}(\Gamma; \mathbb{C}^2)}\\
    &= \big\langle M^{-1} M u_+|_\Gamma, i\sigma_3 \partial_t (u_+|_{\Gamma})\big\rangle_{H^{1/2}(\Gamma; \mathbb{C}^2)\times H^{-1/2}(\Gamma; \mathbb{C}^2)},
\end{aligned}
\end{equation}
which appears in the proof of Theorem ~\ref{t-quadraticform} in equation \eqref{e-boundary0s}. Note that $u_+|_\Gamma \in H^{1/2}(\Gamma; \mathbb{C}^2)$ and $M u_+|_\Gamma = u_-|_\Gamma \in H^{1/2}(\Gamma; \mathbb{C}^2)$. Let $\gamma\colon \mathbb{R} \rightarrow \mathbb{R}^2$ be an arc length parametrization of $\Gamma$ such that $\gamma(\mathbb{R}_-) = \Gamma_r$ and $\gamma(\mathbb{R}_+) = \Gamma_l$, set $\wt{\nu} := \nu \circ \gamma$, and define for $s \in [0,1]$ the weighted Sobolev space
\begin{equation} \label{def_H_nu}
  H^s_{\wt{\nu}}(\mathbb{R}; \mathbb{C}^2) := \big\{ f \in H^s(\mathbb{R}; \mathbb{C}^2): i \sigma_3 (\sigma \cdot \wt{\nu}) f \in H^s(\mathbb{R}; \mathbb{C}^2) \big\},
\end{equation}
which is a Hilbert space when endowed with the norm
\begin{equation*}
  \| f \|_{H^s_{\wt{\nu}}(\mathbb{R}; \mathbb{C}^2)}^2:= \| f \|_{H^s(\mathbb{R}; \mathbb{C}^2)}^2 + \| i \sigma_3 (\sigma \cdot \wt{\nu}) f \|_{H^s(\mathbb{R}; \mathbb{C}^2)}^2, \qquad f \in H^s_{\wt{\nu}}(\mathbb{R}; \mathbb{C}^2).
\end{equation*}
The definition of $M$ in \eqref{def_M}  then implies that $(u_+|_\Gamma) \circ \gamma \in H^{1/2}_{\wt{\nu}}(\mathbb{R}; \mathbb{C}^2)$. Hence, we conclude that~\eqref{equation_commutation} is true if we can show the following proposition:

\begin{prop} \label{proposition_commutation_H_nu}
  Let $f, g \in H^{1/2}_{\widetilde{\nu}}(\mathbb{R}; \mathbb{C}^2)$. Then,
  \begin{equation*}
    \begin{split}
      \big\langle (i \sigma_3 (\sigma \cdot \wt{\nu}) f)',  g&\big\rangle_{H^{-1/2}(\mathbb{R}; \mathbb{C}^2)\times H^{1/2}(\mathbb{R}; \mathbb{C}^2)}\\
      &= \big\langle f', i \sigma_3 (\sigma \cdot \wt{\nu}) g \big\rangle_{H^{-1/2}(\mathbb{R}; \mathbb{C}^2)\times H^{1/2}(\mathbb{R}; \mathbb{C}^2)}
    \end{split}
  \end{equation*}
  holds.
\end{prop}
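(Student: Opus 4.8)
The plan is to rewrite the identity in terms of the piecewise constant matrix $B:=i\sigma_3(\sigma\cdot\widetilde{\nu})$ and then prove it by approximation from functions supported away from the corner. Recall that $\widetilde{\nu}=\nu\circ\gamma$ takes the two constant values $\nu_l=(\nu_{l,1},\nu_{l,2})$ on $\mathbb{R}_+$ and $\nu_r=(\nu_{r,1},\nu_{r,2})$ on $\mathbb{R}_-$, namely the outer normals of the edges $\Gamma_l,\Gamma_r$ from~\eqref{def_Gamma_l_r}, so $B$ equals the constant matrices $B_\pm$ on $\mathbb{R}_\pm$. Using $\sigma_3\sigma_1=i\sigma_2$ and $\sigma_3\sigma_2=-i\sigma_1$ one computes $B_+=\nu_{l,2}\sigma_1-\nu_{l,1}\sigma_2$ and $B_-=\nu_{r,2}\sigma_1-\nu_{r,1}\sigma_2$; in particular $B_\pm^*=B_\pm$, and since $\omega\ne\tfrac{\pi}{2}$ forces $\nu_l\ne\nu_r$, we have $B_+-B_-=(\nu_{l,2}-\nu_{r,2})\sigma_1-(\nu_{l,1}-\nu_{r,1})\sigma_2\ne 0$; as $(c_1\sigma_1+c_2\sigma_2)^2=(c_1^2+c_2^2)\sigma_0$ by~\eqref{Pauli_anti_commutation}, the matrix $B_+-B_-$ is invertible. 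In this notation the assertion reads $\langle (Bf)',g\rangle=\langle f',Bg\rangle$ in the $H^{-1/2}$–$H^{1/2}$ duality pairing over $\mathbb{R}$ for $\mathbb{C}^2$-valued functions.

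First I would note that both sides are bounded sesquilinear forms on $H^{1/2}_{\widetilde{\nu}}(\mathbb{R};\mathbb{C}^2)\times H^{1/2}_{\widetilde{\nu}}(\mathbb{R};\mathbb{C}^2)$: indeed $|\langle (Bf)',g\rangle|\le\|(Bf)'\|_{H^{-1/2}}\|g\|_{H^{1/2}}\le\|Bf\|_{H^{1/2}}\|g\|_{H^{1/2}}\le\|f\|_{H^{1/2}_{\widetilde{\nu}}}\|g\|_{H^{1/2}_{\widetilde{\nu}}}$, and symmetrically $|\langle f',Bg\rangle|\le\|f\|_{H^{1/2}_{\widetilde{\nu}}}\|g\|_{H^{1/2}_{\widetilde{\nu}}}$. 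On the subspace $C_c^\infty(\mathbb{R}\setminus\{0\};\mathbb{C}^2)$ the identity is immediate: if $f$ vanishes near $0$ then $Bf\in C_c^\infty(\mathbb{R}\setminus\{0\};\mathbb{C}^2)$ as well, all pairings coincide with $L^2$ inner products, and $\int_{\mathbb{R}}(Bf)'\cdot\overline{g}=\int_{\mathbb{R}_-}B_-f'\cdot\overline{g}+\int_{\mathbb{R}_+}B_+f'\cdot\overline{g}=\int_{\mathbb{R}}f'\cdot\overline{Bg}$ since $B_\pm^*=B_\pm$. Hence it suffices to prove that $C_c^\infty(\mathbb{R}\setminus\{0\};\mathbb{C}^2)$ is dense in $H^{1/2}_{\widetilde{\nu}}(\mathbb{R};\mathbb{C}^2)$ for the graph norm $f\mapsto(\|f\|_{H^{1/2}}^2+\|Bf\|_{H^{1/2}}^2)^{1/2}$.

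For the density, the key observation is that if $f\in H^{1/2}_{\widetilde{\nu}}(\mathbb{R};\mathbb{C}^2)$ then the truncation $f\chi_{(0,\infty)}$ again belongs to $H^{1/2}(\mathbb{R};\mathbb{C}^2)$ — a property that fails for generic $H^{1/2}$ functions and is exactly where the jump of $B$ at the corner interacts with the transmission condition. Namely, $Bf-B_-f=(B_+-B_-)\,f\chi_{(0,\infty)}$; the left-hand side lies in $H^{1/2}(\mathbb{R};\mathbb{C}^2)$ (as $Bf\in H^{1/2}$ and $B_-$ is constant), and multiplication by the invertible constant matrix $B_+-B_-$ is an isomorphism of $H^{1/2}(\mathbb{R};\mathbb{C}^2)$, so $f\chi_{(0,\infty)}\in H^{1/2}(\mathbb{R};\mathbb{C}^2)$, and likewise $f\chi_{(-\infty,0)}\in H^{1/2}(\mathbb{R};\mathbb{C}^2)$. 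Since $f\chi_{(0,\infty)}$ is supported in $[0,\infty)$, routine approximation in $H^{1/2}(\mathbb{R};\mathbb{C}^2)$ — translate by $t_k\downarrow 0$ (using continuity of translations on $H^{1/2}$), mollify at scale $<t_k/2$, and cut off at infinity by a smooth bounded cutoff — yields $\psi_k^+\in C_c^\infty((0,\infty);\mathbb{C}^2)$ with $\psi_k^+\to f\chi_{(0,\infty)}$ in $H^{1/2}$; symmetrically one gets $\psi_k^-\in C_c^\infty((-\infty,0);\mathbb{C}^2)$ with $\psi_k^-\to f\chi_{(-\infty,0)}$. Then $\psi_k:=\psi_k^++\psi_k^-\in C_c^\infty(\mathbb{R}\setminus\{0\};\mathbb{C}^2)$, $\psi_k\to f$ in $H^{1/2}$, and, since $\psi_k^\pm$ is supported where $B\equiv B_\pm$, one has $B\psi_k=B_+\psi_k^++B_-\psi_k^-\to B_+(f\chi_{(0,\infty)})+B_-(f\chi_{(-\infty,0)})=Bf$ in $H^{1/2}$; thus $\psi_k\to f$ in the graph norm, which gives the density and hence the proposition.

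The main obstacle is precisely this truncation step $f\chi_{(0,\infty)}\in H^{1/2}(\mathbb{R};\mathbb{C}^2)$: it is the only point at which the non-degeneracy of the corner ($\omega\ne\tfrac{\pi}{2}$, equivalently invertibility of $B_+-B_-$) enters, and it is what makes the decomposition of an element of $H^{1/2}_{\widetilde{\nu}}$ into its two one-sided pieces legitimate — the same mechanism that makes the two boundary terms in~\eqref{e-boundary0s} cancel. Everything else — boundedness of the two forms, the trivial identity on functions supported away from $0$, and the $H^{1/2}(\mathbb{R})$-approximation of a function supported in a closed half-line by smooth functions compactly supported in the open half-line — is standard.
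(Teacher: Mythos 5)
Your proof is correct, but it follows a genuinely different route from the paper's. The paper first identifies $H^{1/2}_{\wt{\nu}}(\RR;\CC^2)$ with the Lions--Magenes space $H^{1/2}_{00}(\RR\setminus\{0\};\CC^2)=\big[H^1_0(\RR\setminus\{0\};\CC^2),L^2(\RR;\CC^2)\big]_{1/2}$ (Lemmas~\ref{lemma_H_nu1} and~\ref{lemma_H_nu2}), gets the density of $H^1_0(\RR\setminus\{0\};\CC^2)$ and the boundedness of multiplication by $i\sigma_3(\sigma\cdot\wt{\nu})$ from abstract interpolation, and then passes to the limit using the commutation identity \eqref{commutation_derivative_H^1_0} valid on $H^1_0$. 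You bypass interpolation theory entirely: you prove directly that $C_c^\infty(\RR\setminus\{0\};\CC^2)$ is dense in the graph norm by splitting $f$ into its two one-sided truncations --- legitimate because $(B_+-B_-)\,f\chi_{(0,\infty)}=Bf-B_-f\in H^{1/2}(\RR;\CC^2)$ and the jump $B_+-B_-$ is invertible --- and approximating each piece by translation, mollification and cutoff; the convergence $B\psi_k\to Bf$ in $H^{1/2}$ then comes for free from the explicit decomposition $B\psi_k=B_+\psi_k^++B_-\psi_k^-$ rather than from an abstract boundedness statement on an interpolation scale. Note that your key truncation step is precisely the device the paper uses inside the proof of Lemma~\ref{lemma_H_nu2} to show $H^{1/2}_{\wt{\nu}}(\RR;\CC^2)\subset H^{1/2}_{00}(\RR\setminus\{0\};\CC^2)$, so the essential mechanism (invertibility of the jump of $i\sigma_3(\sigma\cdot\wt{\nu})$ at the corner, i.e.\ $\omega\ne\frac{\pi}{2}$) is shared; what differs is the packaging. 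Your version is more elementary and self-contained, while the paper's yields as a by-product the stronger membership $(u_\pm|_\Gamma\circ\gamma)\in H^{1/2}_{00}(\RR\setminus\{0\};\CC^2)$ recorded in the closing remark of the appendix, which your argument does not produce (your truncations land only in $H^{1/2}(\RR;\CC^2)$, without the Hardy-type weight).
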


For the proof of Proposition~\ref{proposition_commutation_H_nu}, some preliminary considerations on the spaces $H^s_{\widetilde{\nu}}(\mathbb{R}; \mathbb{C}^2)$ are necessary. First, we consider the cases $s=0$ and $s=1$. In the following, we denote by $[X, Y]_\theta$, $\theta \in [0,1]$, the interpolation space between two Hilbert spaces $X \subset Y$ with continuous and dense embedding; see \cite[Chapter~I, \S 15]{LM} and also~\cite[Appendix B]{M}. 
\begin{lem} \label{lemma_H_nu1}
  Let the space $H^s_{\wt{\nu}}(\mathbb{R}; \mathbb{C}^2)$, $s \in [0,1]$, be defined as in~\eqref{def_H_nu}. Then, the following holds:
  \begin{itemize}
    \item[(i)] $H^0_{\wt{\nu}}(\mathbb{R}; \mathbb{C}^2) = L^2(\mathbb{R}; \mathbb{C}^2)$, with equivalent norms, and multiplication by $i \sigma_3 (\sigma \cdot \wt{\nu})$ defines a bounded map in $L^2(\mathbb{R}; \mathbb{C}^2)$.
    \item[(ii)] $H^1_{\wt{\nu}}(\mathbb{R}; \mathbb{C}^2) = H^1_0(\mathbb{R} \setminus \{ 0 \}; \mathbb{C}^2)$, with equivalent norms, one has 
    \begin{equation} \label{commutation_derivative_H^1_0}
      \big( i \sigma_3 (\sigma \cdot \wt{\nu}) f \big)' = i \sigma_3 (\sigma \cdot \wt{\nu}) f'
    \end{equation}
    for all $f \in H^1_{\wt{\nu}}(\mathbb{R}; \mathbb{C}^2)$, and multiplication by $i \sigma_3 (\sigma \cdot \wt{\nu})$ defines a bounded map in $H^1_0(\mathbb{R}\setminus \{ 0 \}; \mathbb{C}^2)$.
  \end{itemize}
  In particular, for any $\theta \in [0,1]$ the map
  \begin{equation} \label{equation_M_bounded}
    i \sigma_3 (\sigma \cdot \wt{\nu})\colon \big[ H^1_0(\mathbb{R} \setminus \{ 0 \}; \mathbb{C}^2), L^2(\mathbb{R}; \mathbb{C}^2) \big]_\theta \rightarrow \big[ H^1_0(\mathbb{R} \setminus \{ 0 \}; \mathbb{C}^2), L^2(\mathbb{R}; \mathbb{C}^2) \big]_\theta
  \end{equation}
  is bounded.
\end{lem}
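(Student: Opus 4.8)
The plan is to exploit that, $\Gamma$ being a broken line, the field $\widetilde{\nu}=\nu\circ\gamma$ is \emph{piecewise constant}: it equals a fixed unit vector $\nu_r$ on $\RR_-$ and a fixed unit vector $\nu_l$ on $\RR_+$, and $\nu_l\neq\nu_r$ precisely because the opening angle satisfies $\omega\in(0,\tfrac{\pi}{2})$. Hence the matrix field $P:=i\sigma_3(\sigma\cdot\widetilde{\nu})$ takes only the two constant values $P_r:=i\sigma_3(\sigma\cdot\nu_r)$ and $P_l:=i\sigma_3(\sigma\cdot\nu_l)$, and each of these is Hermitian, unitary and an involution; this follows at once from $(\sigma\cdot n)^2=|n|^2\sigma_0$ and the anticommutation relation $\sigma_3(\sigma\cdot n)=-(\sigma\cdot n)\sigma_3$, both consequences of~\eqref{Pauli_anti_commutation}. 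In particular $|P(x)v|=|v|$ for every $v\in\CC^2$ and almost every $x\in\RR$, and $P$ is constant on each of $\RR_\pm$.

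For item~(i) I would note that the pointwise isometry property just stated makes multiplication by $P$ an isometry on $L^2(\RR;\CC^2)$, whence $\|f\|^2_{H^0_{\widetilde{\nu}}(\RR;\CC^2)}=2\|f\|^2_{L^2(\RR;\CC^2)}$; this yields $H^0_{\widetilde{\nu}}(\RR;\CC^2)=L^2(\RR;\CC^2)$ with equivalent norms, as well as the asserted boundedness. For item~(ii) I would argue both inclusions. If $f\in H^1_0(\RR\setminus\{0\};\CC^2)$, then $f\in H^1(\RR;\CC^2)$ with $f(0)=0$, and since $P$ is constant on each half-line, $Pf$ restricted to $\RR_\pm$ is $H^1$ and vanishes at $0$; hence $Pf\in H^1(\RR;\CC^2)$ and $f\in H^1_{\widetilde{\nu}}(\RR;\CC^2)$. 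Conversely, if $f\in H^1_{\widetilde{\nu}}(\RR;\CC^2)$ then both $f$ and $Pf$ are continuous on $\RR$, and comparing the one-sided limits at $0$ gives $(P_r-P_l)f(0)=0$; since $P_r-P_l=i\sigma_3\big(\sigma\cdot(\nu_r-\nu_l)\big)$ with $\nu_r-\nu_l\neq0$, this matrix is invertible, so $f(0)=0$ and $f\in H^1_0(\RR\setminus\{0\};\CC^2)$. The norm equivalence is then immediate, because on each half-line $P$ is a constant unitary and therefore preserves the $L^2$-norms of both $f$ and $f'$; the same observation, together with the absence of a jump at $0$ (as $f(0)=0$), yields the identity~\eqref{commutation_derivative_H^1_0} and shows that multiplication by $P$ is an isometry, in particular bounded, on $H^1_0(\RR\setminus\{0\};\CC^2)$.

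Finally, for the interpolation statement, I would observe that the embedding $H^1_0(\RR\setminus\{0\};\CC^2)\hookrightarrow L^2(\RR;\CC^2)$ is continuous and dense (it already contains $C_0^\infty(\RR\setminus\{0\};\CC^2)$, which is dense in $L^2$), so the interpolation spaces $\big[H^1_0(\RR\setminus\{0\};\CC^2),L^2(\RR;\CC^2)\big]_\theta$ are well defined; since multiplication by $P$ is bounded on both endpoint spaces by (i) and (ii), the interpolation theorem (see~\cite[Chapter~I]{LM}) immediately gives~\eqref{equation_M_bounded} for every $\theta\in[0,1]$. The only genuinely non-routine point in the whole argument is the realization that the condition ``$Pf\in H^1(\RR)$'' hidden in the definition of $H^1_{\widetilde{\nu}}$ collapses to the Dirichlet condition $f(0)=0$; this is exactly where the invertibility of $P_r-P_l$, that is, the fact that the line is genuinely broken ($\omega\neq\tfrac{\pi}{2}$), enters. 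Everything else is bookkeeping with piecewise-constant unitary multipliers.
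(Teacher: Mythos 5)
Your proposal is correct and follows essentially the same route as the paper's proof: pointwise unitarity of $i\sigma_3(\sigma\cdot\wt{\nu})$ for part (i), reduction of the condition $i\sigma_3(\sigma\cdot\wt{\nu})f\in H^1(\mathbb{R};\mathbb{C}^2)$ to the Dirichlet condition $f(0)=0$ via the invertibility of $i\sigma_3\big(\sigma\cdot(\wt{\nu}_r-\wt{\nu}_l)\big)$ for part (ii), and interpolation between the two endpoint bounds for \eqref{equation_M_bounded}. The only cosmetic difference is that you detect the jump at the corner through the continuity of $H^1(\mathbb{R})$ functions, whereas the paper extracts the same matrix times $f(0)$ as the coefficient of a boundary term in a distributional integration by parts; the two arguments are equivalent.
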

\begin{proof}
  (i) Since $|\wt{\nu}|=1$, the anti-commutation relations in~\eqref{Pauli_anti_commutation} imply that $i \sigma_3 (\sigma \cdot \wt{\nu})$ is pointwise unitary. Hence, the claims in item~(i) are clear.
  
  (ii) Let $f \in H^1_{\wt{\nu}}(\mathbb{R}; \mathbb{C}^2)$. Then,  by definition $f \in H^1(\mathbb{R}; \mathbb{C}^2)$ and $i \sigma_3 (\sigma \cdot \wt{\nu}) f \in H^1(\mathbb{R}; \mathbb{C}^2)$. This implies for any $\varphi \in C_0^\infty(\mathbb{R}; \mathbb{C}^2)$ that
  \begin{equation*}
    \begin{split}
      \int_\mathbb{R} &\big( i \sigma_3 (\sigma \cdot \wt{\nu}) f \big)' \varphi dx
      = -\int_\mathbb{R} i \sigma_3 (\sigma \cdot \wt{\nu}) f \varphi' dx \\
      &= \int_\mathbb{R_-} i \sigma_3 (\sigma \cdot \wt{\nu}_r) f' \varphi dx + \int_\mathbb{R_+} i \sigma_3 (\sigma \cdot \wt{\nu}_l) f' \varphi dx - i \sigma_3 (\sigma \cdot (\wt{\nu}_r - \wt{\nu}_l)) f(0) \varphi(0),
    \end{split}
  \end{equation*}
  where the notation $\wt{\nu}_{l/r} = \wt{\nu}|_{\mathbb{R}_{+/-}}$ and integration by parts in $\mathbb{R}_\pm$ were used in the last step. Since $\wt{\nu}_l \neq \wt{\nu}_r$, the anti-commutation relations in~\eqref{Pauli_anti_commutation} imply that $i \sigma_3 (\sigma \cdot (\wt{\nu}_r - \wt{\nu}_l))$ is invertible and hence $f(0)$ must be $0$, i.e. $f \in H^1_0(\mathbb{R} \setminus \{ 0 \}; \mathbb{C}^2)$. Moreover, the above calculation shows~\eqref{commutation_derivative_H^1_0} for $f \in H^1_{\wt{\nu}}(\mathbb{R}; \mathbb{C}^2)$. 
  
  On the other hand, a similar consideration as above yields for $f \in H^1_0(\mathbb{R} \setminus \{ 0 \}; \mathbb{C}^2)$ that $f \in H^1_{\wt{\nu}}(\mathbb{R}; \mathbb{C}^2)$ and that~\eqref{commutation_derivative_H^1_0} holds in this case as well. Eventually, since $i \sigma_3 (\sigma \cdot \wt{\nu})$ is pointwise unitary, it follows from~\eqref{commutation_derivative_H^1_0} that the norms in $H^1_{\wt{\nu}}(\mathbb{R}; \mathbb{C}^2)$ and $H^1_0(\mathbb{R} \setminus \{ 0 \}; \mathbb{C}^2)$ are equivalent. 
  
  Finally, as the multiplication by $i \sigma_3 (\sigma \cdot \wt{\nu})$ gives rise to a bounded map in both spaces $H^1_0(\mathbb{R} \setminus \{ 0 \}; \mathbb{C}^2)$ and  $L^2(\mathbb{R}; \mathbb{C}^2)$ by items~(ii) and~(i), respectively, it follows by interpolation that the map in~\eqref{equation_M_bounded} is well defined and bounded.
\end{proof}

In the next lemma we characterize $H^{1/2}_{\wt{\nu}}(\mathbb{R}; \mathbb{C}^2)$. For this purpose, recall that the space $H^{1/2}_{00}(\mathbb{R} \setminus \{ 0 \}; \mathbb{C}^2)$ is defined by
\begin{equation*}
  H^{1/2}_{00}(\mathbb{R} \setminus \{ 0 \}; \mathbb{C}^2) = \left\{ f \in H^{1/2}(\mathbb{R}; \mathbb{C}^2): \int_{\mathbb{R}} \frac{|f(x)|^2}{|x|} dx < \infty \right\}
\end{equation*}
and that this space, endowed with the norm
\begin{equation*}
  \| f \|_{H^{1/2}_{00}(\mathbb{R} \setminus \{ 0 \}; \mathbb{C}^2)}^2 := \| f \|_{H^{1/2}(\mathbb{R}; \mathbb{C}^2)}^2 + \int_{\mathbb{R}} \frac{|f(x)|^2}{|x|} dx, \qquad f \in H^{1/2}_{00}(\mathbb{R} \setminus \{ 0 \}; \mathbb{C}^2),
\end{equation*}
is a Hilbert space. We are going to use 
\begin{equation} \label{interpolation_H_00}
  H^{1/2}_{00}(\mathbb{R} \setminus \{ 0 \}; \mathbb{C}^2) = \big[ H^1_0(\mathbb{R} \setminus \{ 0 \}; \mathbb{C}^2), L^2(\mathbb{R}; \mathbb{C}^2) \big]_{1/2},
\end{equation}
cf. \cite[Chapter~I, Theorem~11.7 and Remark~11.5]{LM}. In particular, by elementary properties of interpolation spaces this implies that $H^1_0(\mathbb{R} \setminus \{ 0 \}; \mathbb{C}^2)$ is dense in $H^{1/2}_{00}(\mathbb{R} \setminus \{ 0 \}; \mathbb{C}^2)$.

\begin{lem} \label{lemma_H_nu2}
  Let $H^{1/2}_{\wt{\nu}}(\mathbb{R}; \mathbb{C}^2)$ be defined by~\eqref{def_H_nu}. Then, $H^{1/2}_{\wt{\nu}}(\mathbb{R}; \mathbb{C}^2) = H^{1/2}_{00}(\mathbb{R} \setminus \{ 0 \}; \mathbb{C}^2)$.
\end{lem}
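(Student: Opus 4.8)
The plan is to reduce the claim to the classical description of the Lions--Magenes space $H^{1/2}_{00}$ via extension by zero. Write $P := i\sigma_3(\sigma\cdot\wt\nu)$; this is the piecewise constant unitary matrix function equal to $P_r := i\sigma_3(\sigma\cdot\wt\nu_r)$ on $\RR_-$ and to $P_l := i\sigma_3(\sigma\cdot\wt\nu_l)$ on $\RR_+$, so that $P = P_r + (P_l - P_r)\chi_{\RR_+}$, where $\chi_{\RR_+}$ is the indicator function of $(0,\infty)$. As already observed in the proof of Lemma~\ref{lemma_H_nu1}\,(ii), the matrix $P_l - P_r = i\sigma_3\big(\sigma\cdot(\wt\nu_l - \wt\nu_r)\big)$ is invertible, since $\wt\nu_l \ne \wt\nu_r$ forces $\big(\sigma\cdot(\wt\nu_l - \wt\nu_r)\big)^2 = |\wt\nu_l - \wt\nu_r|^2\sigma_0$ to be invertible. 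Hence, for $f \in H^{1/2}(\RR;\CC^2)$, the identities $Pf = P_r f + (P_l - P_r)\chi_{\RR_+}f$ and $\chi_{\RR_+}f = (P_l - P_r)^{-1}(Pf - P_r f)$, together with $P_r$ being a constant invertible matrix, show that
\[
  Pf \in H^{1/2}(\RR;\CC^2) \quad\Longleftrightarrow\quad \chi_{\RR_+}f \in H^{1/2}(\RR;\CC^2).
\]
Thus it remains only to prove that, for $f \in H^{1/2}(\RR;\CC^2)$, one has $\chi_{\RR_+}f \in H^{1/2}(\RR;\CC^2)$ if and only if $\int_{\RR} |f(x)|^2/|x|\,dx < \infty$, i.e. $f \in H^{1/2}_{00}(\RR\setminus\{0\};\CC^2)$.

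For this I would invoke the classical characterization (see \cite[Chapter~I, \S 11]{LM}, cf.\ also \cite[Appendix~B]{M}): a function $g \in H^{1/2}$ on a half-line lies in the corresponding $H^{1/2}_{00}$-space — equivalently, $\int |g(x)|^2/|x|\,dx < \infty$ over that half-line — precisely when its extension by zero to all of $\RR$ belongs to $H^{1/2}(\RR;\CC^2)$. Applying this with $g = f|_{\RR_+}$, whose zero-extension is exactly $\chi_{\RR_+}f$: if $\chi_{\RR_+}f \in H^{1/2}(\RR;\CC^2)$, then $\int_0^\infty |f|^2/x\,dx < \infty$, and moreover $\chi_{\RR_-}f = f - \chi_{\RR_+}f \in H^{1/2}(\RR;\CC^2)$ is the zero-extension of $f|_{\RR_-}$, so $\int_{-\infty}^0 |f|^2/|x|\,dx < \infty$ as well; adding, $f \in H^{1/2}_{00}(\RR\setminus\{0\};\CC^2)$. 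Conversely, if $f \in H^{1/2}_{00}(\RR\setminus\{0\};\CC^2)$, then $f|_{\RR_+}$ has finite weighted integral, so its zero-extension $\chi_{\RR_+}f$ lies in $H^{1/2}(\RR;\CC^2)$, which gives $Pf\in H^{1/2}(\RR;\CC^2)$ and hence $f\in H^{1/2}_{\wt\nu}(\RR;\CC^2)$. (This last inclusion $H^{1/2}_{00}(\RR\setminus\{0\};\CC^2) \subseteq H^{1/2}_{\wt\nu}(\RR;\CC^2)$ can alternatively be read off directly from Lemma~\ref{lemma_H_nu1}: boundedness of \eqref{equation_M_bounded} at $\theta=\tfrac12$ combined with \eqref{interpolation_H_00} says that multiplication by $P$ maps $H^{1/2}_{00}(\RR\setminus\{0\};\CC^2)$ into itself, hence into $H^{1/2}(\RR;\CC^2)$.)

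The only genuinely delicate point is the reverse inclusion $H^{1/2}_{\wt\nu}(\RR;\CC^2) \subseteq H^{1/2}_{00}(\RR\setminus\{0\};\CC^2)$, whose content is the Hardy-type implication ``$f,\ \chi_{\RR_+}f \in H^{1/2}(\RR;\CC^2)$'' $\Rightarrow$ ``$\int_\RR |f|^2/|x|\,dx < \infty$''. This is exactly where the exponent $\tfrac12$ is critical — for $|s|<\tfrac12$ multiplication by $\chi_{\RR_+}$ is bounded on $H^s(\RR)$ and imposes no side condition — and I would dispatch it by citing the Lions--Magenes characterization of $H^{1/2}_{00}$ recalled above rather than reproving the underlying Hardy inequality by hand. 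Everything else (the decomposition of $P$, the invertibility of $P_l-P_r$, and the componentwise passage to $\CC^2$-valued functions) is elementary.
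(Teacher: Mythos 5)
Your proposal is correct and follows essentially the same route as the paper: the same decomposition $i\sigma_3(\sigma\cdot\wt\nu) = i\sigma_3(\sigma\cdot\wt\nu_r) + i\sigma_3(\sigma\cdot(\wt\nu_l-\wt\nu_r))\chi_{\RR_+}$, the same invertibility argument via the anti-commutation relations, and the same reduction to the statement that $\chi_{\RR_\pm}f\in H^{1/2}(\RR;\CC^2)$ together with the interpolation identity~\eqref{interpolation_H_00} for the easy inclusion. The only (cosmetic) difference is that you cite the Lions--Magenes zero-extension characterization of $H^{1/2}_{00}$ for the Hardy-weight condition, whereas the paper derives the needed inequality $\int_0^\infty |f(x)|^2/x\,dx \le |\chi_{\RR_+}f|^2_{H^{1/2}(\RR)}$ in two lines directly from the Sobolev--Slobodeckii seminorm.
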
 
\begin{proof}
  First, it follows from~\eqref{equation_M_bounded} and~\eqref{interpolation_H_00} that $H^{1/2}_{00}(\mathbb{R} \setminus \{ 0 \}; \mathbb{C}^2) \subset H^{1/2}_{\wt{\nu}}(\mathbb{R}; \mathbb{C}^2)$. To show the converse inclusion, let $f \in H^{1/2}_{\wt{\nu}}(\mathbb{R}; \mathbb{C}^2)$. Set $\wt{\nu}_r = \wt{\nu}(-1)$ and $\wt{\nu}_l = \wt{\nu}(1)$. Then, as $\wt{\nu}_r$ is constant, $f \in H^{1/2}(\mathbb{R}; \mathbb{C}^2)$ implies that $i \sigma_3 (\sigma \cdot \wt{\nu}_r) f \in H^{1/2}(\mathbb{R}; \mathbb{C}^2)$ and thus also
  \begin{equation*}
    i \sigma_3 (\sigma \cdot (\wt{\nu} - \wt{\nu}_r)) f =  i \sigma_3 (\sigma \cdot (\wt{\nu}_l - \wt{\nu}_r)) \chi_{\mathbb{R}_+} f \in H^{1/2}(\mathbb{R}; \mathbb{C}^2),
  \end{equation*}
  where $\chi_{\mathbb{R}_+}$ denotes the characteristic function for $\mathbb{R}_+$. Since the anti-commutation relations in~\eqref{Pauli_anti_commutation} imply that $ i \sigma_3 (\sigma \cdot (\wt{\nu}_l - \wt{\nu}_r))$ is invertible, we conclude that $\chi_{\mathbb{R}_+} f \in H^{1/2}(\mathbb{R}; \mathbb{C}^2)$. Making use of the equivalent Sobolev-Slobodeckii norm in $H^{1/2}(\mathbb{R}; \mathbb{C}^2)$, this implies that
  \begin{equation*}
   \begin{split}
     \infty &> \int_\mathbb{R} \int_\mathbb{R} \frac{|(\chi_{\mathbb{R}_+} f)(x) - (\chi_{\mathbb{R}_+} f)(y)|^2}{(y-x)^2} dy dx 
     \geq \int_0^\infty \int_{-\infty}^0 \frac{|f(x)|^2}{(y-x)^2} dy dx 
     = \int_0^\infty \frac{|f(x)|^2}{x} dx.
   \end{split}
  \end{equation*}
  In a similar way as above, the relations $i \sigma_3 (\sigma \cdot \wt{\nu}_l) f \in H^{1/2}(\mathbb{R}; \mathbb{C}^2)$ and $i \sigma_3 (\sigma \cdot \wt{\nu}) f \in H^{1/2}(\mathbb{R}; \mathbb{C}^2)$ imply that $\chi_{\mathbb{R}_-} f \in H^{1/2}(\mathbb{R}; \mathbb{C}^2)$ and 
  \begin{equation*}
   \begin{split}
      -\int_{-\infty}^0 \frac{|f(x)|^2}{x} dx < \infty.
   \end{split}
  \end{equation*}
  Hence, $f \in H^{1/2}_{00}(\mathbb{R} \setminus \{ 0 \}; \mathbb{C}^2)$, which finishes the proof of this lemma.
\end{proof}

Now, we are prepared to prove Proposition~\ref{proposition_commutation_H_nu}:

\begin{proof}[Proof of Proposition~\ref{proposition_commutation_H_nu}]
  Let $f, g \in H^{1/2}_{\wt{\nu}}(\mathbb{R}; \mathbb{C}^2)$. Since $H^1_0(\mathbb{R} \setminus \{ 0 \}; \mathbb{C}^2)$ is dense in the space $H^{1/2}_{00}(\mathbb{R} \setminus \{ 0 \}; \mathbb{C}^2) = H^{1/2}_{\wt{\nu}}(\mathbb{R}; \mathbb{C}^2)$, there exists a sequence $\{f_n\}_{n\in\NN} \subset H^1_0(\mathbb{R} \setminus \{ 0 \}; \mathbb{C}^2)$ such that 
  \begin{equation*}
    \| f_n - f \|_{H^{1/2}_{00}(\mathbb{R} \setminus \{ 0 \}; \mathbb{C}^2)} \rightarrow 0, \quad \text{as} \quad n \rightarrow \infty.
  \end{equation*}
  Taking the definition of the norm in $H^{1/2}_{00}(\mathbb{R} \setminus \{ 0 \}; \mathbb{C}^2)$ into account this implies, in particular, that $f_n \rightarrow f$ in $H^{1/2}(\mathbb{R}; \mathbb{C}^2)$ and thus, also $f_n' \rightarrow f'$ in $H^{-1/2}(\mathbb{R}; \mathbb{C}^2)$.
  
  Next, as the multiplication by $i \sigma_3 (\sigma \cdot \wt{\nu})$ gives rise to a bounded operator in $H^{1/2}_{00}(\mathbb{R} \setminus \{ 0 \}; \mathbb{C}^2)$ by~\eqref{equation_M_bounded} and~\eqref{interpolation_H_00}, we get that 
  \begin{equation*}
    \big\| i \sigma_3 (\sigma \cdot \wt{\nu}) (f_n-f) \big\|_{H^{1/2}(\mathbb{R}; \mathbb{C}^2)} \leq \big\| i \sigma_3 (\sigma \cdot \wt{\nu}) (f_n-f) \big\|_{H^{1/2}_{00}(\mathbb{R} \setminus \{ 0 \}; \mathbb{C}^2)} \rightarrow 0, \quad \text{as} \quad n \rightarrow \infty.
  \end{equation*}
  In particular, $i \sigma_3 (\sigma \cdot \wt{\nu}) f_n \rightarrow i \sigma_3 (\sigma \cdot \wt{\nu}) f$ in $H^{1/2}(\mathbb{R}; \mathbb{C}^2)$ and $(i \sigma_3 (\sigma \cdot \wt{\nu}) f_n)' \rightarrow (i \sigma_3 (\sigma \cdot \wt{\nu}) f)'$ in $H^{-1/2}(\mathbb{R}; \mathbb{C}^2)$.
  Hence, using~\eqref{commutation_derivative_H^1_0}, we conclude
  \begin{equation*}
    \begin{split}
      \big\langle (i \sigma_3 (\sigma \cdot \wt{\nu}) f)', g&\big\rangle_{H^{-1/2}(\mathbb{R}; \mathbb{C}^2)\times H^{1/2}(\mathbb{R}; \mathbb{C}^2)}\\
      &= \lim_{n \rightarrow \infty} \big\langle (i \sigma_3 (\sigma \cdot \wt{\nu}) f_n)', g\big\rangle_{H^{-1/2}(\mathbb{R}; \mathbb{C}^2)\times H^{1/2}(\mathbb{R}; \mathbb{C}^2)} \\
      &= \lim_{n \rightarrow \infty} \int_\mathbb{R} i \sigma_3 (\sigma \cdot \wt{\nu}) f_n' \overline{g} dx
      = \lim_{n \rightarrow \infty} \int_\mathbb{R}  f_n' \overline{i \sigma_3 (\sigma \cdot \wt{\nu}) g} dx\\
      &= \lim_{n \rightarrow \infty} \big\langle f_n', i \sigma_3 (\sigma \cdot \wt{\nu}) g \big\rangle_{H^{-1/2}(\mathbb{R}; \mathbb{C}^2)\times H^{1/2}(\mathbb{R}; \mathbb{C}^2)}\\
      &= \big\langle f', i \sigma_3 (\sigma \cdot \wt{\nu}) g \big\rangle_{H^{-1/2}(\mathbb{R}; \mathbb{C}^2)\times H^{1/2}(\mathbb{R}; \mathbb{C}^2)},
    \end{split}
  \end{equation*}
  which is the claimed identity.
\end{proof}
\begin{rem}
    The property $u_\pm|_\Gamma \in H^{1/2}(\RR;\CC^2)$ for any $u\in\dom S_\tau$ immediately follows from the trace theorem. However, the construction in this appendix obtains the stronger statement that $(u_\pm|_{\Gamma}\circ \gamma) \in H^{1/2}_{00}(\RR\setminus\{0\};\CC^2)$ for any $u\in\dom S_\tau$ as a by-product. As before we identify $\Gamma$ with $\RR$ here in such a way that the origin of the real axis coincides with the corner of the broken line.
\end{rem}

\section{Bound for a one-dimensional auxiliary problem} \label{appendix_auxiliary_problem}

Define for $\gamma > 0$ and $\tau\in (-\infty,0)\setminus\{-2\}$ in $L^2((-\gamma, \gamma); \mathbb{C}^2)$ the semibounded and closed  quadratic form $\mathfrak{d}_\gamma$ by
\begin{equation*}
  \begin{split}
    \mathfrak{d}_\gamma[f] &= \int_{(-\gamma, \gamma) \setminus \{ 0 \}} |f'(x)|^2 dx + m^2 \int_{(-\gamma, \gamma)} |f(x)|^2 dx 
     +\frac{2m}{\tau} |f(0^+)-f(0^-)|^2, \\
     \dom \mathfrak{d}_\gamma &= \big\{ u \in H^1((-\gamma, \gamma) \setminus \{ 0 \}; \mathbb{C}^2)\colon f(0^-)=\wt{M} f(0^+) \big\},
  \end{split}
\end{equation*}
where $\wt{M}$ is given by~\eqref{def_wt_M}.
This appendix is devoted to the proof that for all $f \in \dom \mathfrak{d}_\gamma$ one has the bound
\begin{equation} \label{bound_1D_auxiliary}
  \begin{split}
   \mathfrak{d}_\gamma[f] \geq E(\gamma) \int_{(-\gamma, \gamma)} |f(x)|^2 dx 
  \end{split}
\end{equation}
with $E(\gamma) \in [0, m^2)$ such that
\begin{equation} \label{estimate_E_gamma}
    \lim_{\gamma \rightarrow  \infty} E(\gamma) = m^2\left(\frac{\tau^2-4}{\tau^2+4}\right)^2,
\end{equation}
which is used in~\eqref{estimate_lower_bound} in the proof of Proposition~\ref{p-essspec1}. To verify~\eqref{bound_1D_auxiliary} we follow closely the proof of the very similar result \cite[Lemma~4.13]{HOBP}, but since the situation in \cite{HOBP} is slightly different from ours and for completeness we give a full proof here.

The strategy to show~\eqref{bound_1D_auxiliary}--\eqref{estimate_E_gamma} is to determine the self-adjoint operator $D_\gamma$ that is associated with $\mathfrak{d}_\gamma$ via Kato's first representation theorem and to estimate its spectrum. As $\dom D_\gamma \subset \dom \mathfrak{d}_\gamma$ is compactly embedded in $L^2((-\gamma, \gamma); \mathbb{C}^2)$, its spectrum is purely discrete and hence, it suffices to estimate the smallest eigenvalue $E(\gamma)$ of $D_\gamma$. Since one has $C_0^\infty((-\gamma, \gamma) \setminus \{ 0 \}; \mathbb{C}^2) \subset \dom \mathfrak{d}_\gamma$, one finds that $\dom D_\gamma \subset H^2((-\gamma, \gamma) \setminus \{ 0 \}; \mathbb{C}^2) \cap \dom \mathfrak{d}_\gamma$ and that the action of $D_\gamma$ is $D_\gamma f = -f'' + m^2 f$ in $(-\gamma, \gamma) \setminus \{ 0 \}$ for $f \in \dom D_\gamma$. Moreover, using integration by parts, one gets for any $f \in \dom D_\gamma$ and any $g \in \dom \mathfrak{d}_\gamma$
\begin{equation*}
  \begin{split}
    \mathfrak{d}_\gamma[f,g] &= \int_{(-\gamma, \gamma) \setminus \{ 0 \}} (-f'' + m^2 f) \overline{g} dx + f'(\gamma) \overline{g(\gamma)} - f'(0^+) \overline{g(0^+)} \\
    &\qquad + f'(0^-) \overline{g(0^-)} - f'(-\gamma) \overline{g(-\gamma)} + \frac{2 m}{\tau} (f(0^+)-f(0^-)) \overline{(g(0^+)-g(0^-))} \\
    &= \int_{(-\gamma, \gamma) \setminus \{ 0 \}} D_\gamma f \overline{g} dx + f'(\gamma) \overline{g(\gamma)} - f'(-\gamma) \overline{g(-\gamma)} \\
    &\qquad + \left[ \wt{M} f'(0^-) - f'(0^+)  + \frac{2 m}{\tau} (\sigma_0-\wt{M})^2 f(0^+) \right] \overline{g(0^+)},
  \end{split}
\end{equation*}
where in the second step the transmission condition for $f, g \in \dom \mathfrak{d}_\gamma$ was taken into account. Thus, we conclude that
\begin{equation*}
  \begin{split}
    \dom D_\gamma = \bigg\{ f \in H^2((-\gamma, \gamma) \setminus \{ 0 \}; \mathbb{C}^2)&
    \colon f(0^-)=\wt{M} f(0^+), f'(\gamma) = f'(-\gamma) =0,  \\
    &\qquad  \wt{M} f'(0^-) - f'(0^+)  + \frac{2 m}{\tau} (\sigma_0-\wt{M})^2 f(0^+) = 0 \bigg\}.
  \end{split}
\end{equation*}
To compute the smallest eigenvalue $E = E(\gamma)$ of $D_\gamma$, we set $k = \sqrt{m^2 - E}$ and note first that a corresponding eigenfunction $f_E$ must be of the form 
\begin{equation*}
  f_E(x) = \begin{cases} a_+ e^{-k x} + b_+ e^{kx},& x \in(0, \gamma), \\ a_- e^{k x} + b_- e^{-kx},& x \in(-\gamma, 0), \end{cases}
\end{equation*}
for some $a_\pm, b_\pm \in \mathbb{C}^2$ and it has to fulfil the conditions for $f \in \dom D_\gamma$. From the condition $f_E'(\pm \gamma) = 0$ we conclude that $a_\pm = b_\pm e^{2 k \gamma}$. Taking this into account, one finds that $f_E(0^-) = \wt{M} f_E(0^+)$ yields $b_- = \wt{M} b_+$. Hence, the last condition for $f_E \in \dom D_\gamma$ simplifies to
\begin{equation} \label{condition_A_gamma}
  \begin{split}
    0 &= \wt{M} f'_E(0^-) - f'_E(0^+)  + \frac{2 m}{\tau} (\sigma_0-\wt{M})^2 f_E(0^+) \\
    &= k (e^{2 k \gamma} - 1) (\wt{M}^2 + \sigma_0) b_+ + \frac{2 m}{\tau} (e^{2 k \gamma} + 1) (\sigma_0-\wt{M})^2 b_+.
  \end{split}
\end{equation}
By the definition of $\wt{M}$ one finds with a direct calculation that $\wt{M}^2 + \sigma_0 = 2 \frac{4+\tau^2}{4-\tau^2} \wt{M}$ and $(\sigma_0-\wt{M})^2 = \frac{4 \tau^2}{4-\tau^2} \wt{M}$. Since $\wt{M}$ is invertible, we find that~\eqref{condition_A_gamma} is equivalent to
\begin{equation} \label{equation_ev_A_gamma}
  -\frac{4 m \tau}{4+\tau^2} = k \frac{e^{k \gamma} - e^{-k \gamma}}{e^{k \gamma} + e^{-k \gamma}} = k \tanh(k \gamma) =: F_\gamma(k).
\end{equation}
Since 
\begin{equation*}
  F_\gamma'(k) = \tanh(k \gamma) + \frac{k \gamma}{\cosh^2(k \gamma)} > 0 \quad \text{for} \quad k > 0,
\end{equation*}
$\lim_{k \rightarrow 0^+}F(k\gamma)=0$, and $\lim_{k \rightarrow \infty}F(k\gamma)=\infty$, we find that $F_\gamma: (0, \infty) \rightarrow (0, \infty)$ is bijective. Hence, the equation~\eqref{equation_ev_A_gamma} has exactly one solution $k_\gamma>0$, which is associated with the smallest eigenvalue of $D_\gamma$; we remark that~\eqref{equation_ev_A_gamma} has infinitely many solutions $k$ of the form $k=i x$ with $x \in \mathbb{R}$, which are, due to $k = \sqrt{m^2-E}$, associated with eigenvalues of $D_\gamma$ that are larger than $m$.

Finally, to compute the limit of the solution $k_\gamma$ of~\eqref{equation_ev_A_gamma} for $\gamma \rightarrow \infty$, we note first that $\tanh(x) < 1$ for all $x>0$ and~\eqref{equation_ev_A_gamma} yield that $k_\gamma > -\frac{4 m \tau}{4+\tau^2}$ for all $\gamma > 0$. Thus, $\gamma k_\gamma \rightarrow \infty$, as $\gamma \rightarrow \infty$. By using now once more that $k_\gamma$ satisfies~\eqref{equation_ev_A_gamma}, we obtain
\begin{equation*}
  \lim_{\gamma \rightarrow \infty} k_\gamma = \lim_{\gamma \rightarrow \infty} \left(-\frac{4 m \tau}{4+\tau^2} \frac{1}{\tanh(k_\gamma \gamma)} \right) = -\frac{4 m \tau}{4+\tau^2}.
\end{equation*}
Hence,  we conclude for the smallest eigenvalue $E(\gamma)$ of $D_\gamma$ via the relation $k_\gamma = \sqrt{m^2 - E(\gamma)}$ that
\begin{equation*}
  \lim_{\gamma \rightarrow \infty} E(\gamma) = \lim_{\gamma \rightarrow \infty} (m^2 - k_\gamma^2) = m^2\left(\frac{\tau^2-4}{\tau^2+4}\right)^2,
\end{equation*}
which is exactly the claim in~\eqref{estimate_E_gamma}.
\end{appendix}

\subsection*{Acknowledgement}

DF and MH gratefully acknowledge financial support by the Austrian Science Fund (FWF): P33568-N. 
VL acknowledges the support by the grant No.~21-07129S of the Czech Science Foundation (GA\v{C}R).
This publication is based upon work from COST Action CA 18232 MAT-DYN-NET, supported by COST (European Cooperation in Science and Technology), www.cost.eu.


\end{document}